\title{Transverse instability for nonlinear Schr\"odinger equation with a linear potential 
\amssubj{35B32, 35B35, 35Q55.}}
\author{YOHEI YAMAZAKI\footnote{{\it E-mail addresses:} y-youhei@math.kyoto-u.ac.jp} \\ {\footnotesize Department of Mathematics, Kyoto University,}\\ {\footnotesize Kitashirakawa-Oiwakecho, Sakyo, Kyoto 606-8502, Japan}}
\def\pdfliteral #1 {}
\numberwithin{equation}{section}
\newtheorem{theorem}{Theorem}[section]
\newtheorem{corollary}[theorem]{Corollary}
\newtheorem{lemma}[theorem]{Lemma}
\newtheorem{proposition}[theorem]{Proposition}
\theoremstyle{definition}
\newtheorem{definition}[theorem]{Definition}
\newtheorem{remark}[theorem]{Remark}
\renewcommand{\eqref}[1]{(\ref{#1})}
\renewcommand{\bigskip}{\vspace{0.3cm}}
\renewcommand{\l}{\left}
\renewcommand{\r}{\right}
\newcommand{\R}{{\mathbb R}}
\newcommand{\C}{{\mathbb C}}
\newcommand{\Z}{{\mathbb Z}}
\newcommand{\T}{{\mathbb T}}
\newcommand{\norm}[1]{{\left \lVert #1 \right \rVert}}
\newcommand{\tbr}[1]{\langle #1 \rangle}
\newcommand{\Tbr}[1]{\left\langle #1 \right\rangle}
\newcommand{\RT}{\mathbb {R} \times \mathbb {T}}
\newcommand{\RTL}{\mathbb {R} \times \mathbb {T}_L}
\newcommand{\re}{\mbox{\rm Re }}
\newcommand{\im}{\mbox{\rm Im }}
\date{}
\begin{document}

\maketitle


\begin{abstract}
In this paper we consider the transverse instability for a nonlinear Schr\"odinger equation with a linear potential on $\RTL$, where $2\pi L$ is the period of the torus $\T_L$.
Rose and Weinstein \cite{R W} showed the existence of a stable standing wave for a nonlinear Schr\"odinger equation with a linear potential.
We regard the standing wave of nonlinear Schr\"odinger equation on $\R$ as a line standing wave of nonlinear Schr\"odinger equation on $\R \times \T_L$.
We show the stability of line standing waves for all $L>0$ by using the argument of the previous paper \cite{YY2}.
\end{abstract}


\section{Introduction}

We consider the nonlinear Schr\"odiner equation with linear potential
\begin{equation}\label{NLS}
 i\partial_t u = -\Delta u +V(x) u - |u|^{p-1}u, \quad (t,x,y) \in \R \times \RTL,
\end{equation}
where $p>1$, a potential $V: \R \to \R$ and $u=u(t,x,y)$ is an unknown complex-valued function for $t \in \R, x \in \R$ and $y \in \T_L$.
Here, $\T_L=\R/2\pi L \Z$ and $L>0$.

We assume the following conditions for $V$.
\begin{enumerate}
  \setlength{\parskip}{0.1cm} 
  \setlength{\itemsep}{0cm} 
\renewcommand{\labelenumi}{\rm (V\arabic{enumi})}
\item There exist $C>0$ and $\alpha >0$ such that $ |V(x)| \leq Ce^{-\alpha |x|} $.
\item $-\partial_x^2 +V$ has the lowest eigenvalue $-\lambda_*<0$.
\end{enumerate}

The Cauchy problem (\ref{NLS}) is locally well-posed in $H^1(\RTL)$ by using the argument in \cite{G V} and \cite{T T}.
The equation (\ref{NLS}) has the following conservation laws:
\[\begin{split}
E(u)&=\int_{\RTL}\l(\frac{1}{2}|\nabla u|^2 + \frac{1}{2}V(x)|u|^2 -  \frac{1}{p+1} |u|^{p+1}\r)dxdy,\\
Q(u)&=\frac{1}{2}\int_{\RTL} |u|^2dxdy,
\end{split}\]
where $u \in H^1(\RTL)$.

We define a standing wave $u(t)$ as a non-trivial solution of (\ref{NLS}) having the form $u(t) = e^{i\omega t}\varphi$.
Then, $e^{i\omega t}\varphi$ is a standing wave if and only if $\varphi$ is a non-trivial solution of 
\begin{equation}
-\Delta \varphi + \omega \varphi + V(x) \varphi - |\varphi|^{p-1}\varphi = 0, \quad (x,y) \in \RTL.
\end{equation}
Using the bifurcation theory, Rose and Weinstein \cite{R W} showed the existence of the stable standing wave $e^{i \omega t}\varphi_{\omega}$ for the following nonlinear Schr\"odinger equation
\begin{equation}\label{1DNLS}
 i\partial_t u = -\partial_x^2 u +V(x) u - |u|^{p-1}u, \quad (t,x) \in \R \times \R.
\end{equation}
Then, the standing wave $e^{i\omega t}\varphi_{\omega}$ satisfies the following.
\begin{proposition}\label{R-W}
Let $\psi_*$ be the eigenfunction of $-\partial_x^2 +V(x)$ corresponding to $-\lambda_*$ with $\psi_*>0$ and $\norm{\psi_*}_{L^2}=1$.
Then, there exists $\omega_*>\lambda_*$ such that for $\lambda_* < \omega < \omega_*$, $e^{i\omega t}\varphi_{\omega}$ is a stable standing wave of {\rm (\ref{1DNLS})} satisfying 
\[ \varphi_{\omega} = \norm{\psi_*}_{L^{p+1}(\R)}^{-\frac{p+1}{p-1}}(\omega-\lambda_*)^{\frac{1}{p-1}}\psi_* + r(\omega),\]
where $\norm{r(\omega)}_{H^2(\R)}=O((\omega_0-\lambda_*)^{\frac{1}{p-1}+1})$.
Moreover, $ L_{\omega}^+ = -\partial_x^2 + \omega +V - p|\varphi_{\omega}|^{p-1}$ has the exactly one negative eigenvalue $-\lambda_{\omega}$ and does not have the zero eigenvalue.
\end{proposition}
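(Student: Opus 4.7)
The plan is to construct the branch $\varphi_\omega$ by a Lyapunov--Schmidt bifurcation-from-a-simple-eigenvalue argument at $\omega=\lambda_*$, read off the near-threshold spectral picture of $L_\omega^+$ by analytic perturbation theory, and conclude stability from the classical Grillakis--Shatah--Strauss criterion.

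Set $\mu:=\omega-\lambda_*$ and make the ansatz
\[
\varphi_\omega \;=\; c_0\,\mu^{1/(p-1)}\psi_* + \mu^{1/(p-1)+1}\eta,\qquad c_0:=\norm{\psi_*}_{L^{p+1}(\R)}^{-(p+1)/(p-1)},
\]
with $\eta\in H^2(\R)$ constrained to the $L^2$-orthogonal complement of $\psi_*$. (Equivalently, rescale $\varphi=\mu^{1/(p-1)}w$ to reduce the stationary equation to $H_0 w=\mu(w^p-w)$, where $H_0:=-\partial_x^2+V+\lambda_*$.) Substituting into $(-\partial_x^2+\omega+V)\varphi=|\varphi|^{p-1}\varphi$ and factoring out $\mu^{1/(p-1)+1}$ produces, on $\psi_*^\perp$, an equation of the form $H_0\eta=G(\eta,\mu)$ with $G$ smooth in a neighbourhood of $(0,0)$. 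Since $\ker H_0=\mathrm{span}\{\psi_*\}$ is simple and isolated, $H_0$ is invertible on $\psi_*^\perp$; the implicit function theorem then yields a smooth branch $\eta=\eta(\mu)$ on $[0,\omega_*-\lambda_*)$ for some $\omega_*>\lambda_*$. The $\psi_*$-projection of the full equation is the bifurcation equation, whose leading order $c_0^p\norm{\psi_*}_{L^{p+1}}^{p+1}=c_0$ fixes the coefficient $c_0$ as above, while the bound $\norm{\eta(\mu)}_{H^2}=O(1)$ translates into $\norm{r(\omega)}_{H^2}=O(\mu^{1/(p-1)+1})$, giving the stated expansion.

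For the spectral assertion, write $L_\omega^+=H_0+\mu-p|\varphi_\omega|^{p-1}$. Since $|\varphi_\omega|^{p-1}\to 0$ in $L^\infty$ as $\mu\to 0^+$, $L_\omega^+$ converges to $H_0$ in the norm-resolvent sense; as $0$ is a simple isolated eigenvalue of $H_0$ with eigenfunction $\psi_*$ and the remainder of $\sigma(H_0)$ sits in $[\delta,\infty)$ for some $\delta>0$, analytic perturbation theory produces a unique simple eigenvalue $-\lambda_\omega$ of $L_\omega^+$ near $0$ together with a spectral gap. The first-order perturbation computation yields
\[
-\lambda_\omega \;=\; \mu - p\tbr{|\varphi_\omega|^{p-1}\psi_*,\psi_*}_{L^2} + o(\mu) \;=\; \mu - p\mu + o(\mu) \;=\; -(p-1)\mu + o(\mu),
\]
using $c_0^{p-1}\norm{\psi_*}_{L^{p+1}}^{p+1}=1$. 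For $p>1$ this is strictly negative, and all other spectrum of $L_\omega^+$ lies above $\delta/2$ after possibly shrinking $\omega_*$, yielding the ``exactly one negative eigenvalue and no zero eigenvalue'' statement.

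Stability follows from Grillakis--Shatah--Strauss for the phase symmetry $u\mapsto e^{i\theta}u$: the spectral hypothesis on $L_\omega^+$ is above, $\ker L_\omega^-=\mathrm{span}\{\varphi_\omega\}$ holds because $\varphi_\omega>0$ (from $\psi_*>0$ and smallness of $r(\omega)$), and the Vakhitov--Kolokolov slope condition $\tfrac{d}{d\omega}Q(\varphi_\omega)>0$ is immediate from the expansion $Q(\varphi_\omega)=\tfrac12 c_0^2\mu^{2/(p-1)}+O(\mu^{2/(p-1)+1})$ for $\mu$ small. I expect the main obstacle to be the bookkeeping in the Lyapunov--Schmidt step --- in particular, ensuring that $|u|^{p-1}u$ is smooth enough to apply the implicit function theorem in $H^2$ (delicate when $p$ is not an odd integer) and then extracting the precise $H^2$-order of the remainder $r(\omega)$ --- after which the spectral and stability statements are routine perturbative consequences.
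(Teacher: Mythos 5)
Your strategy -- bifurcation from the simple ground-state eigenvalue of $-\partial_x^2+V$, analytic perturbation of $L_\omega^+$ near $H_0=-\partial_x^2+V+\lambda_*$, and then Grillakis--Shatah--Strauss with the Vakhitov--Kolokolov slope condition -- is exactly the route of Rose--Weinstein, which this paper cites for Proposition \ref{R-W} rather than proving it; the paper's own Lemmas \ref{l-4-1} and \ref{l-4-2} redo the same rescaled expansions ($\varphi_{\omega,0}=(\omega-\lambda_*)^{-1/(p-1)}\varphi_\omega$, $\lambda_\omega=(p-1)(\omega-\lambda_*)+\dots$) that you derive, so your spectral and stability computations are consistent with what the paper needs.

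One slip to repair in the write-up: the ansatz $\varphi_\omega=c_0\mu^{1/(p-1)}\psi_*+\mu^{1/(p-1)+1}\eta$ with $\eta\perp\psi_*$ and $c_0$ fixed in advance is over-determined. After solving the $\psi_*^\perp$-projection for $\eta(\mu)$ by the implicit function theorem, the $\psi_*$-projection is a scalar equation with no remaining unknown, so it cannot in general be satisfied; the coefficient of $\psi_*$ must be a free parameter $c$ (with $c(0)=c_0$ forced by the leading-order bifurcation equation $c^{p-1}\norm{\psi_*}_{L^{p+1}(\R)}^{p+1}=1$, and $\partial_c$ of that equation equal to $p-1\neq 0$ so the implicit function theorem applies). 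Since $(c(\mu)-c_0)\mu^{1/(p-1)}=O(\mu^{1/(p-1)+1})$, the stated remainder bound for $r(\omega)$ is unaffected, but as literally written your Lyapunov--Schmidt step does not close. The rest is fine: the first-order eigenvalue computation $-\lambda_\omega=-(p-1)\mu+o(\mu)$ uses exactly the normalization $c_0^{p-1}\norm{\psi_*}_{L^{p+1}(\R)}^{p+1}=1$, the gap to the rest of $\sigma(H_0)$ gives ``exactly one negative eigenvalue, no kernel,'' and the slope condition should be justified by differentiating the smooth rescaled branch $w(\mu)$ (so that the leading term $\frac{1}{p-1}c_0^2\mu^{2/(p-1)-1}$ dominates), not merely from the expansion of $Q(\varphi_\omega)$ itself.
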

We define the line standing wave $e^{i\omega t}\tilde{\varphi}_{\omega}$  of (\ref{NLS}) as
\[ \tilde{\varphi}_{\omega}(x,y)=\varphi_{\omega}(x), \quad (x,y) \in \RTL.\]
In this paper, we consider the transverse instability of the line standing wave $e^{i\omega t}\tilde{\varphi}_{\omega}$.
The stability of standing waves is defined as follows.
\begin{definition}
We say the standing wave $e^{i\omega t}\varphi$ is orbitally stable in $H^1$ if for any $\varepsilon >0$ there exists $\delta>0$ such that for all $u_0\in H^1(\RTL)$ with $\norm{u_0-\varphi}_{H^1}<\delta$, the solution $u(t)$ of (\ref{NLS}) with the initial data $u(0)=u_0$ exists globally in time and satisfies
\[ \sup_{t\geq 0} \inf_{\theta \in \R, y \in \T_L} \norm{u(t,\cdot,\cdot-y)-e^{i\theta}\varphi(\cdot,\cdot-y)}_{H^1} <\varepsilon .\]
Otherwise, we say the standing wave $e^{i\omega t}\varphi$ is orbitally unstable in $H^1$.
\end{definition}

The transverse instability for KP-I or KP-II equation is treated in \cite{A P S,TM,M T,R T 0,R T 1,R T 2,R T 3}.
In \cite{A P S}, Alexander-Pego-Sachs studied the linear instability for line solitons of KP-I and KP-II.
In \cite{M T}, Mizumachi-Tzvetkov proved the asymptotic stability for line solitons of KP-II on $\RT$.
Modulating the local phase and the local amplitude of line solitons, Mizumachi showed the asymptotic stability for line solitons of KP-II on $\R^2$ in \cite{TM}.
Rousset-Tzvetkov proved the transverse instability for line solitons of KP-I on $\R^2$ in \cite{R T 0} and on $\RTL$ in \cite{R T 1}.
In \cite{R T 3}, Rousset-Tzvetkov showed the stability of line solitons for KP-I on $\RTL$ with small $L>0$.
Moreover, Rousset-Tzvetkov proved the existence of the critical period $4/\sqrt{3}$ for the period $L$ of the transverse direction. 
Namely, a line soliton for KP-I on $\RTL$ is stable for $0<L<4/\sqrt{3}$ and unstable for $L>4/\sqrt{3}$.

The transverse instability for a nonlinear Schr\"odinger equation is studied in \cite{D P C,R T 0,R T 1,YY1,YY2}.
In \cite{D P C}, Deconinck-Pelinovsky-Carter studied the linear stability for  line standing waves of a hyperbolic Schr\"odinger equation.
Rousset-Tzvetkov proved the transverse instability for cubic nonlinear Schr\"odinger equation without linear potential on $\R^2$ in \cite{R T 0} and on $\RTL$ in \cite{R T 1}.
To prove the instability, Rousset-Tzvetkov applied the argument of Grenier \cite{EG}.
Rousset-Tzvetkov constructed the high order approximate solution with an unstable eigenmode and showed a precise estimate of the growth of the semi-group generated by the linearized operator.
To construct the high order approximate solution, we use the regularity of the nonlinearity $|u|^2u$ in the sense of Fr\'echet differentiation.
In \cite{YY1}, the author studied the transverse instability for line standing waves of a system of nonlinear Schr\"odinger equations on $\RTL$ which was treated in \cite{C O}.
In \cite{YY1}, the existence of the critical period for a period $L$ was also proved, which was suggested by Rousset-Tzvetkov.
Constructing the estimate for high frequency parts of solutions and using the existence of local solutions, the author showed the transverse instability for line standing waves of equations with the general power nonlinearity.
In \cite{YY2}, the author considers the stability for a line standing wave of (\ref{NLS}) with $V=0$.
The application of the argument in \cite{YY1} yields the existence of the critical period for a line standing wave of (\ref{NLS}) with $V=0$.
For (\ref{NLS}) with $V=0$ and the critical period, the linearized operator around the line standing wave is degenerate.
Therefore, we can not directly apply the argument in Grillakis-Shatah-Strauss \cite{G S S 1,G S S 2}.
Since the linearized operator around the line standing wave with the critical period does not have any unstable eigenvalues, we can not show the instability by the argument based on the occurrence of unstable eigenmode in \cite{G O,G S S 1,R T 1,YY1}.
Moreover,  the third order term of the Lyapunov functional around the line standing wave with the critical period does not appear.
Thus, we can not apply the argument for the degenerate case of the stability in \cite{MO}.
The transverse instability comes from the symmetry breaking bifurcation.
In \cite{YY2}, applying the bifurcation result for symmetry breaking bifurcation and the stability result for the degenerate case in \cite{MM}, the author showed the stability for the line standing wave with critical period for some exponents $p\geq 2$ of the nonlinearity.

The followings are our main theorems in this paper.
In the first theorem, we show the transverse instability of the line standing wave $e^{i\omega t}\tilde{\varphi}_{\omega}$ and obtain the critical period between the stability and the instability. 
\begin{theorem}\label{main result 1}
There exists $\omega_{*,0}>\lambda_*$ such that for $\lambda_*<\omega< \omega_{*,0}$ the followings two assertions hold:
\begin{enumerate}
  \setlength{\parskip}{0.1cm} 
  \setlength{\itemsep}{0cm} 
\renewcommand{\labelenumi}{\rm (\roman{enumi})}
\item If $0< L < (\lambda_\omega)^{-\frac{1}{2}}$, then the standing wave $e^{i\omega t}\tilde{\varphi}_{\omega}$ is stable.
\item If $ (\lambda_\omega)^{-\frac{1}{2}}<L$, then the standing wave $e^{i\omega t}\tilde{\varphi}_{\omega}$ is unstable.
\end{enumerate}
\end{theorem}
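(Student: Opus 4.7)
The plan is to diagonalize the linearized problem by a Fourier series in the transverse direction $y\in\T_L$ and then analyze each one-dimensional mode separately, following the scheme of \cite{YY2} (where the case $V\equiv 0$ was treated). Writing a perturbation as $u = \tilde\varphi_\omega + v$ with $v(x,y)=\sum_{k\in\Z}v_k(x)e^{iky/L}$, the Hessian of the Lyapunov functional $E+\omega Q$ at $\tilde\varphi_\omega$ splits as a direct sum indexed by $k$: on mode $k$ the real-part operator is $L_\omega^+ + (k/L)^2$ and the imaginary-part operator is $L_\omega^- + (k/L)^2$, where $L_\omega^- = -\partial_x^2 + \omega + V - |\varphi_\omega|^{p-1}$ admits $\varphi_\omega>0$ as its ground state with eigenvalue $0$. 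Since $L_\omega^+$ has exactly one negative eigenvalue $-\lambda_\omega$ by Proposition~\ref{R-W}, the sign of the lowest eigenvalue $-\lambda_\omega + (k/L)^2$ of $L_\omega^+ + (k/L)^2$ for $k=\pm 1$ determines the critical period $L = (\lambda_\omega)^{-1/2}$.

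\textbf{Stability for $L < (\lambda_\omega)^{-1/2}$.} Here $(k/L)^2 \geq 1/L^2 > \lambda_\omega$ for every $k\ne 0$, so both $L_\omega^+ + (k/L)^2$ and $L_\omega^- + (k/L)^2$ are strictly positive. Thus the only non-positive directions of the Hessian live in the $k=0$ Fourier sector, which is precisely the one-dimensional problem on $\R$ for which \cite{R W} gives orbital stability. A Grillakis-Shatah-Strauss-type modulation/decomposition argument, using only phase modulation (the single broken continuous symmetry of $\tilde\varphi_\omega$ on $\RTL$) and the conserved quantity $Q$, combines the 1D stability with the coercivity obtained in the transverse modes to yield orbital stability in $H^1(\RTL)$.

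\textbf{Instability for $L > (\lambda_\omega)^{-1/2}$.} In this regime $L_\omega^+ + (1/L)^2$ inherits one negative eigenvalue on the $k=\pm1$ sector while $L_\omega^- + (1/L)^2$ remains strictly positive; a standard spectral argument then produces a real eigenvalue pair $\pm\sigma$ for the linearized flow, hence an unstable eigenmode of the form $e^{iy/L}\chi(x)$ with exponentially growing envelope. To upgrade from linear to nonlinear instability we follow \cite{YY1,YY2}: seed an approximate solution with this unstable mode, control the nonlinear remainder using local well-posedness in $H^1(\RTL)$ together with a frequency-localized estimate, and close by a bootstrap, producing initial data arbitrarily close to $\tilde\varphi_\omega$ whose orbit escapes a fixed $H^1$-neighbourhood.

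\textbf{Main obstacle.} For general $p>1$ the nonlinearity $|u|^{p-1}u$ is not smooth enough in the Fr\'echet sense to allow the construction of high-order approximate solutions \`a la Rousset-Tzvetkov \cite{R T 0,R T 1}. The key technical step is therefore to transfer the high-frequency estimate of \cite{YY1,YY2} to the present situation with the exponentially decaying potential $V$: one must check that (V1) does not destroy the Strichartz-type control on the high transverse modes, and that the spectral data of $L_\omega^\pm + (k/L)^2$ (in particular the gap above the unstable eigenvalue and non-degeneracy away from $k=\pm1$) remain uniform on a non-empty interval $(\lambda_*,\omega_{*,0})$. The threshold $\omega_{*,0}$ is chosen precisely so that these spectral and perturbative requirements are met simultaneously.
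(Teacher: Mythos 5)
Your proposal follows essentially the same route as the paper: a Fourier decomposition in $y$, coercivity of the $k=0$ sector (the paper's Lemma 2.2, which holds only for $\omega$ near $\lambda_*$ and is the real source of the threshold $\omega_{*,0}$) together with strict positivity of the transverse modes for stability, and for instability an unstable eigenmode seeded into the Duhamel formula with a frequency-localized bootstrap in the style of \cite{YY1}, avoiding the Rousset--Tzvetkov high-order approximate solutions exactly as you indicate. One correction of detail: the high-frequency control is not Strichartz-type but follows from conservation of the action $S_\omega$ combined with positivity of $S(a)$ for $|a|>(\lambda_\omega)^{1/2}$, and the decay hypothesis (V1) is used instead in the spectral mapping theorem that yields the exponential growth bound for the low-frequency part of the semigroup.
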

In the Second theorem, we show the stability for the line standing wave $e^{i\omega t}\tilde{\varphi}_{\omega}$ with the critical period $L=(\lambda_\omega)^{-1/2}$.
\begin{theorem}\label{main result 2}
Let $p \geq 2$ and 
\[p_* = \frac{9+ \sqrt{57}}{4}.\]
Then there exists $\lambda_*< \omega_p $ satisfying the following two properties:
\begin{enumerate}
  \setlength{\parskip}{0.1cm} 
  \setlength{\itemsep}{0cm} 
\renewcommand{\labelenumi}{\rm (\roman{enumi})}
\item If $p< p_*$ and $\lambda_*< \omega < \omega_p$, then the standing wave $e^{i\omega t}\tilde{\varphi}_{\omega}$ of {\rm (\ref{NLS})} with $L=(\lambda_\omega)^{-1/2}$ is stable.
\item If $p_*\leq p$ and $\lambda_*< \omega < \omega_p$, then the standing wave $e^{i\omega t}\tilde{\varphi}_{\omega}$ of {\rm (\ref{NLS})} with $L=(\lambda_\omega)^{-1/2}$ is unstable.
\end{enumerate}
\end{theorem}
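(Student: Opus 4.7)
The plan is to adapt the strategy of \cite{YY2} --- symmetry-breaking bifurcation combined with the degenerate-case stability criterion of Maeda \cite{MM} --- to the potential problem in the Rose--Weinstein regime $\omega \downarrow \lambda_*$. Expanding functions on $\RTL$ in Fourier modes in $y$, the linearization $L^+_\omega = -\partial_x^2 + \omega + V - p|\varphi_\omega|^{p-1}$ acts on the $k$-th mode as $L^+_\omega + k^2/L^2$. At the critical period $L = (\lambda_\omega)^{-1/2}$, the operator on the $k = \pm 1$ sectors becomes $L^+_\omega + \lambda_\omega$, which by Proposition \ref{R-W} has a one-dimensional kernel spanned by the ground state $\chi_\omega$ of $L^+_\omega$ associated to $-\lambda_\omega$. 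The full real-part linearization on $\RTL$ thereby acquires an extra two-dimensional transverse kernel beyond the phase and translation directions, while the imaginary-part operator $L^-_\omega + k^2/L^2$ remains strictly positive for $k \neq 0$. This degeneracy of the Hessian of $E + \omega Q$ rules out the direct framework of \cite{G S S 1,G S S 2}.

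\textbf{Step 1: Symmetry-breaking Lyapunov--Schmidt reduction.} Use the $y$-translation invariance to fix the phase of the $k = \pm 1$ Fourier components and construct a smooth local branch of nontrivial real-valued stationary solutions
\[ \Psi_{\omega, s} = \tilde\varphi_\omega + s\,\chi_\omega(x)\cos(y/L) + s^2 \rho_\omega(x, y) + O(s^3), \]
where $\rho_\omega$ is obtained by inverting $L^+_\omega$ on the $k = 0$ sector and $L^+_\omega + 4\lambda_\omega$ on the $k = \pm 2$ sector, both invertible at the critical period by Proposition \ref{R-W} and positivity.

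\textbf{Step 2: Maeda's degenerate criterion.} Since the Hessian of $E + \omega Q$ at $\tilde\varphi_\omega$ degenerates along $\chi_\omega(x)\cos(y/L)$, stability at the critical period is governed by the next nonvanishing jet of $(E + \omega Q)(\Psi_{\omega, s})$. The $y \mapsto -y$ symmetry forces the cubic term to vanish, so \cite{MM} reduces the stability question to the sign of the quartic coefficient
\[ A(\omega, p) := \frac{1}{4!}\frac{d^4}{ds^4}\bigg|_{s=0} (E + \omega Q)(\Psi_{\omega, s}). \]
Positivity of $A(\omega,p)$ gives stability, negativity gives instability.

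\textbf{Step 3: Leading-order asymptotics.} Proposition \ref{R-W} gives $\varphi_\omega = c_p(\omega - \lambda_*)^{1/(p-1)} \psi_* + r(\omega)$, and perturbation theory around $-\partial_x^2 + V$ yields $\lambda_\omega = (p-1)(\omega - \lambda_*) + o(\omega - \lambda_*)$ and $\chi_\omega/\|\chi_\omega\|_{L^2} \to \psi_*$ as $\omega \downarrow \lambda_*$. Substituting these expansions into the formula from Step 2 reduces the computation of $A(\omega, p)$ to a finite combination of integrals of $\psi_*^q$ weighted by rational functions of $p$, and one obtains
\[ A(\omega, p) = C(p)\,(\omega - \lambda_*)^{\gamma} + o\bigl((\omega - \lambda_*)^{\gamma}\bigr), \]
where $C(p)$ is a positive multiple of $-(2p^2 - 9p + 3)$. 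Since $p_* = (9 + \sqrt{57})/4$ is the larger root of $2p^2 - 9p + 3 = 0$, one has $C(p) > 0$ for $2 \leq p < p_*$ and $C(p) < 0$ for $p > p_*$. Choosing $\omega_p$ close enough to $\lambda_*$ that the leading term dominates the error, $A(\omega, p)$ inherits the sign of $C(p)$, yielding the stated dichotomy.

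\textbf{Main obstacle.} The principal technical difficulty is the explicit computation of $C(p)$: tracking the $O(s^2)$ correction $\rho_\omega$, which requires inverting $L^+_\omega$ and $L^+_\omega + 4\lambda_\omega$ in their respective Fourier sectors in the limit $\omega \downarrow \lambda_*$, and substituting the result into the fourth-order expansion of $E + \omega Q$. The polynomial $2p^2 - 9p + 3$ emerges as a specific rational combination of integrals of $\psi_*^q$ coming from the $k = 0$ and $k = \pm 2$ contributions to $\rho_\omega$, with coefficients depending on $p$ through the nonlinearity $|u|^{p-1}u$. A secondary issue is uniformity in $\omega \in (\lambda_*, \omega_p)$ so that the leading-order asymptotic determines the sign of $A(\omega, p)$ throughout the range and so that the Lyapunov--Schmidt reduction remains valid.
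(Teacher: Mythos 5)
Your overall strategy coincides with the paper's: a symmetry-breaking Lyapunov--Schmidt bifurcation at the critical period, the degenerate stability criterion of \cite{MM}, and asymptotics as $\omega\downarrow\lambda_*$ to extract the critical polynomial $2p^2-9p+3$. However, Step 2 as stated contains a genuine error. The quartic jet of $(E+\omega Q)(\Psi_{\omega,s})$ along the \emph{raw} bifurcating branch is not the quantity that decides stability. Since the branch satisfies $E'(\Psi_s)+\omega(s)Q'(\Psi_s)=0$ with $\omega(s)-\omega=\tfrac12\omega''(0)s^2+o(s^2)$, one computes $\frac{d}{ds}S_\omega(\Psi_s)=(\omega-\omega(s))\frac{d}{ds}Q(\Psi_s)$, so the quartic coefficient equals $-\tfrac{1}{16}\omega''(0)R_{p,\omega}$ where $R_{p,\omega}$ is (twice) the coefficient of $s^2$ in $\norm{\Psi_s}_{L^2}^2-\norm{\tilde\varphi_\omega}_{L^2}^2$; because $\omega''(0)>0$, this has the \emph{opposite} sign to the quantity that governs stability. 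The correct criterion (Proposition \ref{c-stability} and Lemma \ref{lem-action} in the paper) requires first correcting the branch by $\rho(\vec a)\,\partial_\omega\tilde\varphi_{\omega}$ so that it lies on the constraint manifold $\{Q=Q(\tilde\varphi_{\omega})\}$; this correction contributes at the same order $|\vec a|^4$ and flips the sign, and the decisive condition becomes: mass increasing along the bifurcating branch ($R_{p,\omega}>0$) implies stability. Taken literally, your ``positivity of $A$ gives stability'' would invert the dichotomy.

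Two further gaps. First, the heart of the theorem is the computation you defer to the ``main obstacle'': what must actually be verified is that after inverting $L^+_\omega$ on the $k=0$ sector and $L^+_\omega+4\lambda_\omega$ on the $k=\pm2$ sector, all potential-dependent quantities (the integrals $\int\psi_*^q$ and $\int\psi_*^p(P_\bot^1H_{\lambda_*}P_\bot^1)^{-1}\psi_{*,p}\,dx$) cancel from the leading term, leaving the pure rational coefficient $\frac{-2(2p^2-9p+3)}{3(p-1)}\pi L$ times a power of $\omega-\lambda_*$. If such integrals survived at leading order, $p_*$ would depend on $V$ and the theorem as stated would fail; asserting that the answer is ``a positive multiple of $-(2p^2-9p+3)$'' begs precisely this question. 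Second, at the borderline exponent $p=p_*$ the leading coefficient vanishes identically, so the sign of your $C(p)$ gives no information there; the theorem nevertheless places $p=p_*$ on the unstable side. The paper resolves this by computing the next-order term of $R_{p,\omega}$, whose sign is fixed by the positivity of $\tbr{(P_\bot^1H_{\lambda_*}P_\bot^1)^{-1}\psi_{*,p_*},\psi_{*,p_*}}_{L^2(\R)}$ multiplied by the negative rational coefficient $-26p_*^3+57p_*^2-82p_*+30$. Your argument does not cover this case, and without it the stated dichotomy ($p_*\le p$ unstable) is not established.
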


The proof of Theorem \ref{main result 1} follows form the spectrum analysis of the linearized operator  and  the estimate of high frequency parts of solution by the argument in \cite{YY1}.
To show the growth of the semi-group generated by the linearized operator, we use the assumption of the decay for the linear potential $V$.
For the proof of Theorem \ref{main result 2}, we apply the bifurcation analysis for the symmetry breaking bifurcation and the argument for the stability in \cite{YY2}.
In \cite{YY2}, to prove the stability for the line standing wave with the critical period, we show the increase of $L^2$-norm of the symmetry breaking standing wave with respect to the bifurcation parameter or the decrease of it.
To show the increase of $L^2$-norm, we need to calculate an integral of a solution of an ordinary differential equation which comes from the linearized equation of one dimensional Schr\"odinger equation around a standing wave.
Since it is difficult to obtain the explicit solution of the ordinary differential equation in the argument in \cite{YY1}, we can not calculate the exact value of the integral and we estimate the value of the integral.
Therefore, it is not known whether the line standing wave is stable or unstable for some nonlinear Schr\"odinger equations with the power nonlinearity $|u|^{p-1}u$   which has some exponent $p \in (2,3)$.
In the proof of Theorem \ref{main result 2}, we treat the small standing wave which bifurcates from the eigenfunction of $-\partial_x^2+V$ with respect to the lowest eigenvalue.
Since the line standing wave of the nonlinear Schr\"odinger equation studied in \cite{YY2} comes from the standing wave of the one dimensional nonlinear Schr\"odinger equation which has the scale invariant, we need to study the fully nonlinear structure of the Lyapunov functional around the line standing wave.
In this paper, using the smallness of the line standing wave of (\ref{NLS})  and the expansion of the standing wave with respect to the parameter $\omega$, we weaken the nonlinear structure of the Lyapunov functional around the line standing wave of (\ref{NLS}).
Therefore, we can evaluate a value of the integral and make a close investigation into the stability for all exponents $p \geq 2$.

The rest of this paper consists of the following three sections.
In Section 2, we show the properties of the spectrum and the coerciveness for the linearized operator around line standing waves.
In Section 3, applying the variational argument in \cite{G S S 1,C O} and the spectrum argument in \cite{YY1}, we prove Theorem \ref{main result 1}.
In Section 4, combining the bifurcation result and the argument for the degenerate case in \cite{MM}, we prove Theorem \ref{main result 2}.

\section{Preliminaries}
In this section, we investigate properties of the linearized operator of (\ref{NLS}) around the standing wave $e^{i\omega t}\tilde{\varphi}_{\omega}$.

Let $H^1(X)=\{ u :X \to \C | \int_X (|\nabla u|^2+|u|^2) dx < \infty \}$ and $H^1(X,\R)=\{ u :X \to \R| \int_X (|\nabla u|^2+|u|^2) dx < \infty\}$.
Let $\psi_{\omega}$ be the eigenfunction of $L_{\omega}^+$ corresponding to $-\lambda_{\omega}$ with $\norm{\psi_{\omega}}_{L^2(\R)}=1$ and $\psi_{\omega}>0$.
We define the action 
\[S_{\omega}(u) = E(u) + \omega Q(u).\]
Then, the action $S_{\omega}$ is a conservation law of (\ref{NLS}) and $S_{\omega}'(\tilde{\varphi}_{\omega})=0$, where $S_{\omega}'$ is the Fr\'echet derivation of $S_{\omega}$.
Moreover, we have
\[ \tbr{S''_{\omega}(\tilde{\varphi}_{\omega})u,v}_{H^{-1},H^1} = \tbr{\mathbb{L}_{\omega}^+ (\re u), \re v}_{H^{-1},H^1} + \tbr{\mathbb{L}_{\omega}^- (\im u), \im v}_{H^{-1},H^1},\]
where
\[
\mathbb{L}_{\omega}^+=-\Delta +\omega +V - p |\tilde{\varphi}_{\omega}|^{p-1},\quad
\mathbb{L}_{\omega}^-=-\Delta +\omega +V -  |\tilde{\varphi}_{\omega}|^{p-1}.
\]
Let 
\[ Ju=iu=
\begin{pmatrix}
0 &-1\\
1 &0
\end{pmatrix}
\begin{pmatrix}
\re u\\
\im u
\end{pmatrix} .\]
For $u,v \in L^2(\RTL)$, we define
\[\tbr{u,v}_{L^2}=\re \int_{\RTL} u \bar{v}dxdy.\] 
In the following proposition, we show properties of the spectrum of the linearized operator for (\ref{NLS}) around $\tilde{\varphi}_{\omega}$.
This proposition follows  Theorem 1.1 of \cite{R T 2} and Lemma 3.1 of \cite{R T 3} (also see Proposition 2.5 of \cite{YY1}.)
\begin{proposition}\label{eigenvalue}
Let $\lambda_*<\omega<\omega_*$.
\begin{enumerate}
  \setlength{\parskip}{0.1cm} 
  \setlength{\itemsep}{0cm} 
\renewcommand{\labelenumi}{\rm (\roman{enumi})}
\item If $0<L\leq (\lambda_{\omega})^{-1/2}$, then $-JS_{\omega}''(\tilde{\varphi}_{\omega})$ has no positive eigenvalue.
\item If $0<L< (\lambda_{\omega})^{-1/2}$, then
\[\mbox{\rm Ker}(S_{\omega}''(\tilde{\varphi}_{\omega}))=\mbox{\rm Span}\l\{i\tilde{\varphi}_{\omega}\r\}.\]
\item If $L=(\lambda_{\omega})^{-1/2}$, then
\[\mbox{\rm Ker}(S_{\omega}''(\tilde{\varphi}_{\omega}))=\mbox{\rm Span}\l\{i\tilde{\varphi}_{\omega},\psi_{\omega} \cos \frac{y}{L}, \psi_{\omega}\sin \frac{y}{L}\r\}.\]
\item If $L> (\lambda_{\omega})^{-1/2}$, then $-JS_{\omega}''(\tilde{\varphi}_{\omega})$ has a positive eigenvalue and the number of eigenvalue of $-JS_{\omega}''(\tilde{\varphi}_{\omega})$ with a positive real part is finite.
\end{enumerate}
Here, $\mbox{\rm Span} \{v_1, \dots, v_n\}$ is the real vector space spanned by vectors $v_1, \dots, v_n$.
\end{proposition}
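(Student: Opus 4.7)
The plan is to Fourier-decompose in the transverse variable $y\in\T_L$. Since $\tilde{\varphi}_\omega(x,y)=\varphi_\omega(x)$ is independent of $y$, the operators $\mathbb{L}_\omega^\pm$ commute with $\partial_y$ and act diagonally on the Fourier basis $\{e^{iky/L}\}_{k\in\Z}$ via $A_k:=L_\omega^+ + k^2/L^2$ and $B_k:=L_\omega^- + k^2/L^2$. Hence $-JS_\omega''(\tilde{\varphi}_\omega)$ splits as a direct sum of blocks $-J\,\mathrm{diag}(A_k,B_k)$, and each of the four assertions reduces to collecting spectral information block by block and summing over $k$.

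For the kernel descriptions in (ii) and (iii), I would first use Proposition~\ref{R-W} (which rules out a zero eigenvalue of $L_\omega^+$) together with the identity $L_\omega^-\varphi_\omega=0$ and the positivity of $\varphi_\omega$ to conclude, via Perron--Frobenius, that $\ker L_\omega^-=\mathrm{Span}\{\varphi_\omega\}$. For $|k|\ge 1$, $B_k\ge k^2/L^2>0$ has trivial kernel, while $A_k$ has nontrivial kernel exactly when $k^2/L^2=\lambda_\omega$, in which case it is spanned by $\psi_\omega$. When $0<L<(\lambda_\omega)^{-1/2}$ no mode with $|k|\ge 1$ is resonant, so only the $k=0$ direction $i\tilde{\varphi}_\omega$ survives in $\ker S_\omega''(\tilde{\varphi}_\omega)$. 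At the critical value $L=(\lambda_\omega)^{-1/2}$ the resonance at $|k|=1$ enlarges the kernel by $\psi_\omega\cos(y/L)$ and $\psi_\omega\sin(y/L)$.

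For (i) and (iv) I would reduce the block eigenvalue equation $-J\,\mathrm{diag}(A_k,B_k)\phi=\mu\phi$ to the second-order form $A_kB_kv=-\mu^2 v$. In case (i) with $|k|\ge 1$, we have $A_k\ge 0$ and $B_k>0$, hence $B_k^{1/2}A_kB_k^{1/2}\ge 0$ and the block spectrum is purely imaginary; for $k=0$, the orbital stability of $\varphi_\omega$ supplied by Proposition~\ref{R-W} excludes any positive real eigenvalue of $-JL_\omega$. In case (iv), the $|k|=1$ block has $A_1$ with the negative lowest eigenvalue $-\lambda_\omega + 1/L^2$ while $B_1>0$, and Sylvester's law of inertia applied to the congruence $B_1^{1/2}A_1B_1^{1/2}$ transfers the single negative eigenvalue, which through the reduction produces a real positive eigenvalue of $-J\,\mathrm{diag}(A_1,B_1)$. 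Finiteness of the eigenvalues with positive real part follows because $A_k>0$ for $|k|$ sufficiently large.

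The main obstacle is ensuring in case (i) that the $k=0$ block, where $L_\omega^+$ is indefinite, contributes no eigenvalues off the imaginary axis with positive real part; this relies on the Hamiltonian-spectral dichotomy together with a coercivity estimate for $S_\omega''$ on the symplectic complement of its kernel, and is handled following the template of \cite{R T 2,R T 3}. A secondary delicate point in (iv) is isolating a real positive eigenvalue rather than merely a complex one of positive real part, for which the strict positivity $B_1\ge 1/L^2>0$ is essential in making the Sylvester-inertia argument rigorous.
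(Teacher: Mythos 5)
Your decomposition into transverse Fourier blocks $S(k/L)=\mathrm{diag}(A_k,B_k)$ and your treatment of (i)--(iii) coincide with the paper's: the kernel statements come from $\ker L_\omega^-=\mathrm{Span}\{\varphi_\omega\}$, the absence of a zero eigenvalue of $L_\omega^+$, and the resonance condition $k^2/L^2=\lambda_\omega$, while (i) combines positivity of the blocks with $|k|\ge 1$ (equivalently, the Pego--Weinstein count of unstable eigenvalues by negative eigenvalues of $S(a)$) with the stability of the one-dimensional wave for the $k=0$ block.

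For (iv), however, you take a genuinely different route. The paper produces the positive eigenvalue by a continuation argument: it applies the implicit function theorem to $M(v,a,\lambda)=S(a)(\psi_\omega+v)+J^{-1}\lambda(\psi_\omega+v)$ near the bifurcation point $a=(\lambda_\omega)^{1/2}$, computes $a'(0)=0$ and $a''(0)<0$ to launch a branch of simple real positive eigenvalues for $a$ slightly below $(\lambda_\omega)^{1/2}$, and then shows by a compactness/contradiction argument (boundedness of $\lambda(a)$ plus invertibility of $S(a_0)$) that this branch persists for all $a\in(0,(\lambda_\omega)^{1/2})$. You instead reduce the block eigenvalue problem to $B_1A_1u=-\mu^2u$ and argue via the congruent operator $B_1^{1/2}A_1B_1^{1/2}$, which inherits a negative direction from $A_1$; this is essentially the Rousset--Tzvetkov ``simple criterion'' of \cite{R T 2}, which the paper cites as the source of the proposition but does not follow in its own proof. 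Your route is shorter and more direct for existence; the paper's route additionally yields simplicity and continuity of the eigenvalue in $a$. Two points in your sketch should be made explicit to be complete. First, passing from the negative value of the quadratic form $\tbr{A_1B_1^{1/2}w,B_1^{1/2}w}$ at $w=B_1^{-1/2}\psi_\omega$ to an actual negative \emph{eigenvalue} of $B_1^{1/2}A_1B_1^{1/2}$ requires locating the essential spectrum of this fourth-order operator in $[(\omega+1/L^2)^2,\infty)$ (using (V1) and the decay of $\varphi_\omega$), so that the min--max principle applies; ``Sylvester's law of inertia'' alone is a finite-dimensional statement. Second, for the finiteness claim you only argue that all but finitely many blocks contribute nothing; you still need that each remaining block contributes finitely many eigenvalues with positive real part, which follows from the Pego--Weinstein bound by the (finite) number of negative eigenvalues of $S(k/L)$, exactly as the paper uses to get the bound $1+2L(\lambda_\omega)^{1/2}$.
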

\begin{proof}
We define
\[ S(a) u=
\begin{pmatrix}
L^+_{\omega}+a^2 & 0\\
0 & L^-_{\omega} + a^2
\end{pmatrix}
\begin{pmatrix}
\re u\\
\im u
\end{pmatrix},
\]
where $u \in H^1(\R)$, $L^+_{\omega}=-\partial_x^2 + \omega +V -p|\varphi_{\omega}|^{p-1}$ and $L^-_{\omega}=-\partial_x^2 + \omega +V -|\varphi_{\omega}|^{p-1}$.
Then, for $u \in H^1(\RTL)$
\[S_{\omega}''(\tilde{\varphi}_{\omega})u(x,y)=\sum_{n=-\infty}^{\infty} S\l( \frac{n}{L} \r)\vec{u}_n(x) e^{\frac{iny}{L}},\]
where
\[ u(x,y) =
\begin{pmatrix}
\re u\\
\im u
\end{pmatrix}
=\sum_{n=-\infty}^{\infty} 
\begin{pmatrix}
u_{R,n}\\
u_{I,n}
\end{pmatrix} e^{\frac{iny}{L}} = \sum_{n=-\infty}^{\infty} \vec{u}_n(x) e^{\frac{iny}{L}}, \quad u \in H^1(\RTL).\]
Therefore, $-JS''_{\omega}(\tilde{\varphi}_{\omega})$ has an eigenvalue $\lambda$ if and only if there exists $n \in \Z$ such that $-JS(n/L)$ has the eigenvalue $\lambda$.

By Proposition \ref{R-W}, $S(a)$ has no negative eigenvalues for $a\geq (\lambda_{\omega})^{1/2}$.
By Theorem 3.1 in \cite{P W}, the number of eigenvalues of $-JS(a)$ with the positive real part is less than or equal to the number of negative eigenvalues of $S(a)$.
Thus, for $a\geq (\lambda_{\omega})^{1/2}$, $JS(a)$ has no eigenvalues with the positive real part.
(i) follows this.
Moreover, the number of eigenvalues of $-JS''_{\omega}(\tilde{\varphi}_{\omega})$ with the positive real part is less than $1+2L/(\lambda_{\omega})^{1/2}$.
Since the kernel of $S(a)$ is trivial for $a> (\lambda_{\omega})^{1/2}$, the kernel of $-JS(a)$ is trivial for $a> (\lambda_{\omega})^{1/2}$.
Then the kernel of $-JS(0)$ is spanned by $i \tilde{\varphi}_{\omega}$.
Therefore, for $a> (\lambda_{\omega})^{1/2}$, the kernel of $-JS_{\omega}''(\tilde{\varphi})$ is spanned by $i \tilde{\varphi}_{\omega}$ and (ii) is verified.

The kernel of $L^+_{\omega}+\lambda_{\omega}$ is spanned by $\psi_{\omega}$ and $L^-_{\omega}+\lambda_{\omega}$ do not has zero eigenvalue.
Hence, the kernel of $S_{\omega}''(\tilde{\varphi}_{\omega})$ with $L=(\lambda_{\omega})^{-1/2}$ is spanned by $i\tilde{\varphi}_{\omega}$, $\psi_{\omega}\cos \frac{y}{L}$ and $\psi_{\omega} \sin \frac{y}{L}$.
This is (iii).

Let
\[ M(v,a,\lambda) = S(a) (\psi_{\omega} +v) +J^{-1}\lambda (\psi_{\omega}+v),\]
where $v \in \{ w \in H^2(\R)| \tbr{v,\psi_{\omega}}_{L^2(\R)}=0\}=:(\psi_{\omega})^\bot$ and $a, \lambda \in \R$.
Then, $M$ is a $C^{\infty}$ function with
\[ M(0, (\lambda_{\omega})^{1/2},0)=0.\]
Since
\[ \frac{\partial M}{\partial (v,a)}\biggl|_{(v,a,\lambda)=(0,(\lambda_{\omega})^{1/2},0)}(w,\mu) = 2(\lambda_{\omega})^{1/2}\mu \psi_{\omega} + S((\lambda_{\omega})^{1/2})w,\]
by the implicit function theorem, there exist $a(\lambda) \in \R$ and $v(\lambda) \in H^2(\R)$ such that $a(\lambda),v(\lambda)$ are the $C^{\infty}$ functions, where $a(0)=(\lambda_{\omega})^{1/2}$, $v(0)=0$ and $M(v(\lambda),a(\lambda),\lambda)=0$.
Then, we have
\[ -JS(a(\lambda))(\psi_{\omega}+v(\lambda))=\lambda (\psi_{\omega}+v(\lambda))\]
for sufficiently small $|\lambda|$.
Differentiating with respect to $\lambda$, we obtain
\[\frac{\partial M}{\partial \lambda} (v(\lambda),a(\lambda),\lambda) = 2a(\lambda)a'(\lambda) (\psi_{\omega} + v(\lambda))+S(a(\lambda))v'(\lambda) + J^{-1}(\psi_{\omega}+v(\lambda)+\lambda v(\lambda))=0.\]
Since $v(0)=0$, we have
\[\tbr{2a(0)a'(0)\psi_{\omega},\psi_{\omega}}_{L^2(\R)}=0,\]
and $a'(0)=0$.
Therefore, we have $S((\lambda_{\omega})^{1/2})v'(0)=-J^{-1}\psi_{\omega}$.
Since $v'(0) \in (\psi_{\omega})^{\bot}$ and 
\[\begin{split}
\frac{\partial^2 M}{\partial \lambda^2}(v(\lambda),a(\lambda),\lambda)|_{\lambda=0}
=2a(0)a''(0)\psi_{\omega}+S((\lambda_{\omega})^{1/2})v''(0) +J^{-1}v'(0)
=0,
\end{split}\]
\[a''(0)=-\frac{\tbr{S((\lambda_{\omega})^{1/2})v'(0),v'(0)}_{H^{-1}(\R),H^1(\R)}}{2(\lambda_{\omega})^{1/2}}<0.\]
From the proof of (i), for $a> (\lambda_{\omega})^{1/2} $, $-JS(a)$ has no positive eigenvalues.
Hence, for sufficiently small $\varepsilon >0$ the function $a(\lambda)$ on $(0,\varepsilon )$ has the inverse function $\lambda(a)$ on $(a(\varepsilon ), (\lambda_{\omega})^{1/2})$ and $a(\varepsilon )< (\lambda_{\omega})^{1/2}$.
Namely, $-JS(a)$ has the simple positive eigenvalue on $(a(\varepsilon ), (\lambda_{\omega})^{1/2})$.

Let 
\[ a_0 = \inf\{ a> 0| -JS(b) \mbox{ has a simple positive eigenvalue for } a< b <(\lambda_{\omega})^{1/2}  \},\]
and for $a \in (a_0,(\lambda_{\omega})^{1/2})$ the value $\lambda(a)$ be the positive eigenvalue of $-JS(a)$.
We assume $a_0>0$.
By the perturbation theory, there exists $\{a_n\}_{n=1}^{\infty} \subset (a_0,(\lambda_{\omega})^{1/2})$ such that $a_n \to a_0$ and 
\[\lim_{n \to \infty} \lambda(a_n) = 0\]
or
\[\lim_{n \to \infty}\lambda(a_n) = \infty.\]
Since there exists $C>0$ such that $|\tbr{-JS(a)u,u)}_{H^{-1}(\R),H^1(\R)}| \leq C\norm{u}_{L^2(\R)}$ for $a \in \R$, $\lambda(a)$ is bounded.
Therefore, 
\[\lim_{n \to \infty}\lambda(a_n) = 0.\]
Then, there exists $\{c_n\}_{n=1}^{\infty}$ such that $\norm{v_n}_{H^1(\R)}=1$ and $-JS(a_n)v_n=\lambda(a_n)v_n$.
Here, 
\[S(a_0)v_n = (a_0^2-a_n^2)v_n-J^{-1}\lambda(a_n)v_n.\]
Since $S(a_0)$ is invertible and $(S(a_0))^{-1}$ is bounded,
\[v_n=(S(a_0))^{-1} ((a_0^2-a_n^2)v_n - J^{-1}\lambda (a_n)v_n) \to 0 \mbox{ as } n \to \infty.\]
This is contradiction.
Therefore, $a_0=0$. 
\end{proof}

Next we show the coerciveness of $L_{\omega}^+$ on a function space which follows the proof of Theorem 3.3 in \cite{G S S 1}.
\begin{lemma}\label{L-est}
There exist $\omega_{*,0}>\lambda_*$ and $k_0>0$ such that for $\lambda_*< \omega <\omega_{*,0}$ and $u \in H^1(\R,\R) $ with $\tbr{\varphi_{\omega},u}_{L^2(\R)}=0$, 
\[\tbr{L^+_{\omega}u,u}_{H^{-1}(\R),H^1(\R)} \geq k_0 \norm{u}_{H^1(\R)}^2,\]
where $\tbr{u,v}_{L^2(\R)}=\re \int_{\R} u\bar{v} dx$.
\end{lemma}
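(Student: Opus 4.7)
The strategy follows the argument of Theorem~3.3 in~\cite{G S S 1}. The cornerstone is the identity obtained by applying $L_\omega^+$ to the equation satisfied by $\varphi_\omega$, namely $L_\omega^+\varphi_\omega = -(p-1)|\varphi_\omega|^{p-1}\varphi_\omega$, giving
\[\tbr{L_\omega^+\varphi_\omega,\varphi_\omega}_{L^2(\R)} = -(p-1)\norm{\varphi_\omega}_{L^{p+1}(\R)}^{p+1} < 0.\]
Combined with Proposition~\ref{R-W} (one simple negative eigenvalue $-\lambda_\omega$ with eigenvector $\psi_\omega$, trivial kernel), this negativity forces $\varphi_\omega$ to have a nontrivial projection onto $\psi_\omega$ and supplies the quantitative control needed for coercivity on $\varphi_\omega^\perp$.

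First I would establish a uniform spectral gap: there exist $\omega_{*,0}>\lambda_*$ and $\mu_*>0$ such that $\tbr{L_\omega^+ v,v}_{L^2(\R)}\geq \mu_*\norm{v}_{L^2(\R)}^2$ for every $\omega\in(\lambda_*,\omega_{*,0})$ and every $v\in H^1(\R,\R)$ with $\tbr{v,\psi_\omega}_{L^2(\R)}=0$. Writing $L_\omega^+ = L_{\lambda_*}^+ + W_\omega$ with $\norm{W_\omega}_{L^\infty}=O(\omega-\lambda_*)$ (using the expansion of $\varphi_\omega$ in Proposition~\ref{R-W}) and $\psi_\omega\to\psi_*$ in $H^2\hookrightarrow L^\infty$ as $\omega\to\lambda_*^+$ by standard perturbation theory for isolated simple eigenvalues, I decompose $v = \tilde v + c\psi_*$ with $\tilde v\perp\psi_*$ in $L^2$. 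The orthogonality $\tbr{v,\psi_\omega}_{L^2(\R)}=0$ then gives $|c| = |\tbr{v,\psi_*-\psi_\omega}_{L^2(\R)}|=O(\omega-\lambda_*)\norm{v}_{L^2(\R)}$. Since $L_{\lambda_*}^+\psi_*=0$, the cross terms vanish and $\tbr{L_{\lambda_*}^+ v,v}_{L^2(\R)}=\tbr{L_{\lambda_*}^+\tilde v,\tilde v}_{L^2(\R)}\geq \mu_{*,0}\norm{\tilde v}_{L^2(\R)}^2$, where $\mu_{*,0}>0$ is the second spectral element of $L_{\lambda_*}^+$ (the minimum of its next eigenvalue, if any, and $\lambda_*$). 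Absorbing the $W_\omega$ contribution yields $\mu_\omega\geq\mu_*$ uniformly.

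Next, for $u\in H^1(\R,\R)$ with $\tbr{\varphi_\omega,u}_{L^2(\R)}=0$, decompose $u = a\psi_\omega + v$ and $\varphi_\omega = b\psi_\omega + \varphi_\omega^\perp$ with $v,\varphi_\omega^\perp$ in the $L^2$-orthogonal complement of $\psi_\omega$ and $b=\tbr{\varphi_\omega,\psi_\omega}_{L^2(\R)}$. From
\[-\lambda_\omega b^2 + \tbr{L_\omega^+\varphi_\omega^\perp,\varphi_\omega^\perp}_{L^2(\R)}=\tbr{L_\omega^+\varphi_\omega,\varphi_\omega}_{L^2(\R)}<0, \qquad \tbr{L_\omega^+\varphi_\omega^\perp,\varphi_\omega^\perp}_{L^2(\R)}\geq \mu_*\norm{\varphi_\omega^\perp}_{L^2(\R)}^2,\]
I read off $b\neq 0$ and $\norm{\varphi_\omega^\perp}_{L^2(\R)}^2\leq \lambda_\omega b^2/\mu_*$. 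The orthogonality $\tbr{u,\varphi_\omega}_{L^2(\R)}=0$ becomes $ab = -\tbr{v,\varphi_\omega^\perp}_{L^2(\R)}$, so $a^2\leq (\lambda_\omega/\mu_*)\norm{v}_{L^2(\R)}^2$, and
\[\tbr{L_\omega^+ u,u}_{L^2(\R)}= -\lambda_\omega a^2 + \tbr{L_\omega^+ v,v}_{L^2(\R)}\geq \l(\mu_* - \frac{\lambda_\omega^2}{\mu_*}\r)\norm{v}_{L^2(\R)}^2 \geq \frac{\mu_*}{2}\norm{v}_{L^2(\R)}^2\]
for $\omega$ close enough to $\lambda_*$ (using $\lambda_\omega\to 0$). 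Since $\norm{u}_{L^2(\R)}^2\leq 2\norm{v}_{L^2(\R)}^2$, the $L^2$-coercivity $\tbr{L_\omega^+ u,u}_{L^2(\R)}\geq (\mu_*/4)\norm{u}_{L^2(\R)}^2$ follows. A standard G{\aa}rding estimate $\tbr{L_\omega^+ u,u}_{L^2(\R)}+M\norm{u}_{L^2(\R)}^2\geq \norm{u}_{H^1(\R)}^2$ (with $M$ depending only on uniform bounds of $V$ and $|\varphi_\omega|^{p-1}$), combined with the $L^2$-bound via a convex combination, then upgrades this to $\tbr{L_\omega^+ u,u}_{L^2(\R)}\geq k_0\norm{u}_{H^1(\R)}^2$ for a uniform $k_0>0$.

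The main obstacle is the uniform positivity of the spectral gap $\mu_\omega$ on $\{\psi_\omega\}^\perp$. Since $\lambda_\omega\to 0$ at $\omega=\lambda_*$, one has to rule out that some other spectral element of $L_\omega^+|_{\psi_\omega^\perp}$ collapses to zero at the same time. The identity $L_{\lambda_*}^+\psi_*=0$ is the crucial ingredient: it annihilates the dangerous cross terms generated by the mismatch between $\psi_\omega$ and $\psi_*$, reducing the uniformity question to the nondegenerate spectral gap of the limit operator $L_{\lambda_*}^+$.
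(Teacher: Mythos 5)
Your argument is correct and reaches the conclusion by a route that differs from the paper's in two organizational choices, though it rests on the same two quantitative facts (the uniform spectral gap of $-\partial_x^2+V$ above its ground state, and the smallness of $\lambda_\omega$ and $\norm{\varphi_\omega}_{L^\infty}^{p-1}$ as $\omega\downarrow\lambda_*$). The paper decomposes $u=a\psi_*+u_{\bot}$ along the \emph{limiting} ground state $\psi_*$ of $-\partial_x^2+V$, uses the gap of $-\partial_x^2+V+\lambda_*$ on $\{\psi_*\}^{\bot}$, and bounds the coefficient via the explicit relation $a=-\tbr{\varphi_\omega,u_{\bot}}_{L^2(\R)}/\tbr{\varphi_\omega,\psi_*}_{L^2(\R)}$ together with the leading-order expansion $\varphi_\omega\approx c(\omega-\lambda_*)^{1/(p-1)}\psi_*$; every remaining term is either nonnegative or $O(\omega-\lambda_*)\norm{u_{\bot}}^2$ and is absorbed. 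You instead decompose along the \emph{actual} ground state $\psi_\omega$ of $L_\omega^+$, which forces you first to transfer the gap from $\{\psi_*\}^{\bot}$ to $\{\psi_\omega\}^{\bot}$ by perturbation (your observation that $L_{\lambda_*}^+\psi_*=0$ annihilates the cross terms is exactly what makes this step uniform), and you then control the dangerous coefficient through the exact identity $\tbr{L_\omega^+\varphi_\omega,\varphi_\omega}_{L^2(\R)}=-(p-1)\norm{\varphi_\omega}_{L^{p+1}(\R)}^{p+1}<0$, yielding $\norm{\varphi_\omega^{\bot}}_{L^2(\R)}^2\le(\lambda_\omega/\mu_*)b^2$ and hence $a^2\le(\lambda_\omega/\mu_*)\norm{v}_{L^2(\R)}^2$. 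What your version buys is a clean algebraic replacement for the paper's asymptotic expansion of $\varphi_\omega$, in the spirit of the classical Grillakis--Shatah--Strauss positivity criterion; what it costs is the extra perturbation step for the gap on $\{\psi_\omega\}^{\bot}$, which the paper avoids by never leaving the fixed reference direction $\psi_*$. Note also that your final constant $\mu_*-\lambda_\omega^2/\mu_*$ degenerates unless $\lambda_\omega<\mu_*$, so your argument, like the paper's, is genuinely confined to the regime $0<\omega-\lambda_*\ll1$; this is consistent with the statement, which only asserts the bound for $\lambda_*<\omega<\omega_{*,0}$.
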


\begin{proof}
Let $u \in H^1(\R,\R) $ with $\tbr{\varphi_{\omega},u}_{L^2(\R)}=0$.
We decompose $u=a\psi_* + u_{\bot}$, where $a=\tbr{u,\psi_*}_{L^2(\R)}$ and $\tbr{\psi_*,u_{\bot}}_{L^2(\R)}=0$. 
From the spectrum of $-\partial_x^2 +V+\lambda_*$, there exists $k>0$ such that 
\[\tbr{(-\partial_x^2 + V+\lambda_*)u_{\bot},u_{\bot}}_{H^{-1}(\R),H^1(\R)} \geq k \norm{u_{\bot}}_{L^2(\R)}^2.\]
By $\norm{V}_{L^{\infty}}<\infty$, we have for $\varepsilon >0$
\[\tbr{(-\partial_x^2 + V+\lambda_*)u_{\bot},u_{\bot}}_{H^{-1}(\R),H^1(\R)} \geq (k-\varepsilon (\norm{V}_{L^{\infty}}-\lambda_*)) \norm{u_{\bot}}_{L^2(\R)}^2 + \varepsilon \norm{\partial_x u_{\bot}}_{L^2(\R)}^2.\]
Therefore, there exists $k'>0$ such that
\[\tbr{(-\partial_x^2 + V+\lambda_*)u_{\bot},u_{\bot}}_{H^{-1}(\R),H^1(\R)} \geq k' \norm{u_{\bot}}_{H^1(\R)}^2.\]
By the assumption $\tbr{\varphi_{\omega},u}_{L^2(\R)}=0$, we have 
\[ a = -\frac{\tbr{\varphi_{\omega},u_{\bot}}_{L^2(\R)}}{\tbr{\varphi_{\omega},\psi_*}_{L^2(\R)}}.\]
Then 
\[ \begin{split}
\tbr{L_{\omega}^+ u,u}_{H^{-1}(\R),H^1(\R)}=&\tbr{(-\partial_x^2+V+\lambda_*)u_{\bot},u_{\bot}}_{H^{-1}(\R),H^1(\R)} +\lambda_{\omega} a^2 \\
&+ \tbr{(\omega-\lambda_* - p|\varphi_{\omega}|^{p-1} )u_{\bot},u_{\bot}}_{H^{-1}(\R),H^1(\R)} -2\lambda_{\omega}\frac{\tbr{\varphi_{\omega},u_{\bot}}_{L^2(\R)}^2}{\tbr{\varphi_{\omega},\psi_*}_{L^2(\R)}}+o(\norm{u_{\bot}}_{L^2(\R)}^2)\\
\geq & k' \norm{u_{\bot}}_{H^1}^2+\lambda_{\omega}a^2 + \tbr{(\omega-\lambda_* - p|\varphi_{\omega}|^{p-1} )u_{\bot},u_{\bot}}_{H^{-1}(\R),H^1(\R)} \\
&-2\lambda_{\omega}\frac{\norm{u_{\bot}}_{L^2(\R)}^2\norm{\varphi_{\omega}}_{L^2(\R)}^2}{\tbr{\varphi_{\omega},\psi_*}_{L^2(\R)}^2}+o(\norm{u_{\bot}}_{L^2(\R)}^2).\\
\end{split}\]
If $|\omega-\lambda_*|$ is sufficiently small, then we obtain the conclusion.
\end{proof}

\section{Proof of Theorem \ref{main result 1}}
In this section, we prove Theorem \ref{main result 1}.
The proof of Theorem \ref{main result 1} is similar to the proof of Theorem 1.5 in \cite{YY1}.
We write the detail of the proof of Theorem \ref{main result 1} for readers.

\subsection{Proof of (i) of Theorem \ref{main result 1}}

In this subsection, we assume $0<L<(\lambda_{\omega})^{-1/2}$.
The proof of (i) of Theorem \ref{main result 1} follows Section 3.1 in \cite{YY1}.

The following proposition follows Grillakis-Shatah-Strauss \cite{G S S 1} or Colin-Ohta \cite{C O}(see \cite{C C O 1}).
\begin{proposition}\label{p-3-1}
Let $e^{i\omega t} \varphi$ be a standing wave of (\ref{NLS}).
Assume that there exists a constant $\delta >0$ such that $\tbr{S_{\omega}''(\varphi)u,u}_{H^{-1},H^1} \geq \delta \norm{u}_{H^1}^2$ for all $u \in H^1(\RTL)$ satisfying $\tbr{\varphi,u}_{L^2}=\tbr{J\varphi,u}_{L^2}=0$.
Then, the standing wave $e^{i\omega t} \varphi$ is stable.
\end{proposition}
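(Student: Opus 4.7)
The plan is to proceed by contradiction along the classical lines of Grillakis-Shatah-Strauss and Colin-Ohta. Suppose $e^{i\omega t}\varphi$ is unstable. Then there exist $\varepsilon_0>0$, a sequence of initial data $u_{0,n}\to\varphi$ in $H^1(\RTL)$, and times $t_n>0$ at which the solution $u_n(t)$ first reaches distance $\varepsilon_0$ from the orbit of $\varphi$. By a standard implicit-function-theorem modulation lemma, for $\varepsilon_0$ sufficiently small one can select modulation parameters $\theta_n\in\R$ and $y_n\in\T_L$ such that
\[ v_n:=e^{-i\theta_n}u_n(t_n,\cdot,\cdot+y_n)-\varphi \]
satisfies $\norm{v_n}_{H^1}=\varepsilon_0$ together with the phase-orthogonality $\tbr{J\varphi,v_n}_{L^2}=0$.

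Next, decompose $v_n=\alpha_n\varphi+w_n$ with $\alpha_n\in\R$ and $\tbr{\varphi,w_n}_{L^2}=0$; then $w_n$ meets both orthogonality conditions of the hypothesis, and the coercivity assumption yields
\[ \tbr{S''_\omega(\varphi)w_n,w_n}_{H^{-1},H^1}\geq\delta\norm{w_n}_{H^1}^2. \]
From conservation and the phase/translation invariance of $Q$ we have $Q(\varphi+v_n)=Q(u_{0,n})\to Q(\varphi)$, and since $Q$ is quadratic this forces $\alpha_n\norm{\varphi}_{L^2}^2+\tfrac12\norm{v_n}_{L^2}^2\to 0$, so $|\alpha_n|=O(\varepsilon_0^2)+o(1)$ as $n\to\infty$.

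On the other hand, since $S'_\omega(\varphi)=0$ and the nonlinear part of $S_\omega$ is of class $C^2$ on $H^1(\RTL)$, Taylor expansion gives
\[ S_\omega(\varphi+v_n)-S_\omega(\varphi)=\tfrac12\tbr{S''_\omega(\varphi)v_n,v_n}_{H^{-1},H^1}+O(\norm{v_n}_{H^1}^3), \]
and the left-hand side equals $S_\omega(u_{0,n})-S_\omega(\varphi)\to 0$; hence $\tbr{S''_\omega(\varphi)v_n,v_n}=O(\varepsilon_0^3)+o(1)$. Expanding this quadratic form along $v_n=\alpha_n\varphi+w_n$, using coercivity on the $w_n$-diagonal term, Cauchy-Schwarz on the cross term, and the smallness of $\alpha_n$, I deduce $\norm{w_n}_{H^1}^2\leq C\varepsilon_0^3+o(1)$. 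Combined with $\norm{v_n}_{H^1}^2\leq C(\alpha_n^2+\norm{w_n}_{H^1}^2)$, this yields $\varepsilon_0^2\leq C\varepsilon_0^3+o(1)$, which is a contradiction once $\varepsilon_0$ is small enough and $n$ is large enough.

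The main obstacle is the modulation step: one must justify the continuous selection of $\theta_n, y_n$ minimizing the orbital distance and verify that the resulting residual $v_n$ meets the desired orthogonality $\tbr{J\varphi,v_n}_{L^2}=0$. For the line standing wave $\tilde\varphi_\omega$ of central interest in this paper the situation simplifies considerably: since $\tilde\varphi_\omega$ is $y$-independent one has $\partial_y\tilde\varphi_\omega=0$, so no orthogonality in the $y$-translation direction arises, consistent with the proposition listing only the two conditions $\tbr{\varphi,u}_{L^2}=\tbr{J\varphi,u}_{L^2}=0$. The cubic Taylor remainder estimate is routine from $H^1$ Sobolev embedding in dimension two.
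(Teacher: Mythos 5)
Your argument is correct and is essentially the classical Grillakis--Shatah--Strauss / Colin--Ohta proof that the paper itself invokes purely by citation (it states only that the proposition ``follows Grillakis-Shatah-Strauss or Colin-Ohta'') — contradiction at the first exit time, phase modulation to enforce $\tbr{J\varphi,v_n}_{L^2}=0$, control of the $\varphi$-component via the charge constraint, and coercivity on the remainder. Two small points to tidy up, neither of which breaks the contradiction: the modulation yields the $L^2$-orthogonality with $\varepsilon_0\le\norm{v_n}_{H^1}\le C\varepsilon_0$ rather than exactly $\varepsilon_0$, and for $1<p<2$ the functional is only $C^2$, so the Taylor remainder should be taken as $o(\norm{v_n}_{H^1}^2)$ rather than $O(\norm{v_n}_{H^1}^3)$.
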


Let $u \in H^1(\RTL)$ satisfy $\tbr{\tilde{\varphi}_{\omega},u}_{L^2}=\tbr{J\tilde{\varphi}_{\omega},u}_{L^2}=0$.
Then, 
We have
\[ \tbr{S_{\omega}''(\tilde{\varphi}_{\omega})u,u}_{H^{-1},H^1} = \sum_{n \in \Z} \tbr{S(n/L)u_n,u_n}_{H^{-1},H^1},\]
where
\[ u(x,y) = \sum_{n \in \Z} u_n(x) e^{\frac{iny}{L}}.\]
Since $L_{\omega}^-$ and $L_{\omega}^+ + \lambda_{\omega}$ are nonnegative, there exists $c>0$ such that
\[\tbr{S(n/L)v,v}_{H^{-1}(\R),H^1(\R)} \geq c \norm{v}_{H^1(\R)}^2\]
for $n\in \Z\backslash \{0\}$ and $v \in H^1(\R)$.
By Proposition \ref{R-W} and Lemma \ref{L-est}, there exists $c'>0$ such that for $v \in H^1(\R)$ with $\tbr{\tilde{\varphi}_{\omega},v}_{L^2}=2\pi L \int_{\R} \varphi_{\omega} (\re v) dx =0$ and $\tbr{J\tilde{\varphi}_{\omega},v}_{L^2}=2\pi L\int_{\R} \varphi_{\omega} (\im v) dx =0$
\[ \tbr{\mathbb{L}^+_{\omega}(\re v),\re v}_{H^{-1},H^1} \geq c'\norm{\re v}_{H^1}^2,\] 
\[ \tbr{\mathbb{L}^-_{\omega}(\im v),\im v}_{H^{-1},H^1} \geq c'\norm{\im v}_{H^1}^2.\] 
Therefore, (i) of Theorem \ref{NLS} follows from Proposition \ref{p-3-1}.

\subsection{Proof of (ii) of Theorem \ref{main result 1}}
The proof of (ii) of Theorem \ref{main result 1} follows Section 3.2 in \cite{YY1}.

In this subsection, we assume $L>(\lambda_{\omega})^{1/2}$.
We define 
\[ \mu_* = \max\{ \lambda >0 | \lambda \in \sigma(-JS''(\tilde{\varphi}_{\omega}))\},\]
where $\sigma (-JS''(\tilde{\varphi}_{\omega}))$ is the spectrum of $-JS''(\tilde{\varphi}_{\omega})$.
Then, there exist $k_0 \in \Z$ and $\chi \in H^1(\RTL)$ such that $\norm{\chi}_{L^2}=1$,  $\chi$ is eigenfunction of $-JS''_{\omega}(\tilde{\varphi}_{\omega})$ corresponding to $\mu_*$ and
\[ \chi(x,y) = \chi_1(x) e^{\frac{ik_0y}{L}} + \chi_2(x) e^{\frac{-ik_0y}{L}},\]
where $\chi_1, \chi_2 \in H^2(\R)$.
We define the orthogonal projection $P_{\leq k}$ as
\[P_{\leq k} u(x,y) = \sum_{n=-k}^k u_n(x) e^{\frac{iny}{L}}, \quad (x,y) \in \RTL,\]
where
\[u(x,y) = \sum_{n=-\infty}^{\infty} u_n(x)e^{\frac{iny}{L}}. \quad (x,y) \in \RTL.\]
A function $u(t)$ is a solution of (\ref{NLS}) if and only if $v(t)$ is a solution of the equation
\begin{equation}\label{NLS2}
\partial_t v = -J(S_{\omega}''(\tilde{\varphi}_{\omega}) v + g(v)),
\end{equation}
where $u(t)=e^{i\omega t}(\tilde{\varphi}_{\omega}+v(t))$,
\[ g(v)=
\begin{pmatrix}
|v+\tilde{\varphi}_{\omega}|^{p-1}(v_R+\tilde{\varphi}_{\omega}) -p|\tilde{\varphi}_{\omega}|^{p-1}v_R -|\tilde{\varphi}_{\omega}|^{p-1}\tilde{\varphi}_{\omega}\\
|v+\tilde{\varphi}_{\omega}|^{p-1}v_I -|\tilde{\varphi}_{\omega}|^{p-1}v_I
\end{pmatrix}
,
\]
and $v_R= \re v$ and $v_I=\im v$.
We define $u_{\delta}(t)$ as the solution of (\ref{NLS}) with the initial data $\tilde{\varphi}_{\omega}+\delta\chi$ and $v_{\delta}(t)$ as the solution of (\ref{NLS2}) with the data $\delta \chi$.
Then, we have that $u_{\delta}(t)=e^{i\omega t}(\tilde{\varphi}_{\omega}+v_{\delta}(t))$.

We show the estimate of nonlinear term in the following lemma which follows Lemma 2.4 of \cite{G V 2}.
\begin{lemma}\label{lem-3-1}
There exists $C>0$ such that
\[\norm{g(v)}_{L^2}\leq 
\begin{cases}
C\norm{v}_{H^1}^{p}, \quad 1<p\leq 2,\\
C(\norm{v}_{H^1}^2+\norm{v}_{H^1}^{p}), \quad 2<p.
\end{cases}\]
\end{lemma}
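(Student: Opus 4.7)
The plan is to recognize that $g(v)$ is the second-order remainder in the Taylor expansion of the nonlinearity $F(u)=|u|^{p-1}u$ (viewed as a map $\C\to\C$) about the real-valued function $\tilde{\varphi}_{\omega}$. A direct computation of the Fr\'echet derivative at a real base point yields
\[ DF(\tilde{\varphi}_{\omega})[h] = p\,\tilde{\varphi}_{\omega}^{\,p-1}\re h + i\,\tilde{\varphi}_{\omega}^{\,p-1}\im h,\]
so that the two components of $g(v)$ are precisely the real and imaginary parts of $F(\tilde{\varphi}_{\omega}+v)-F(\tilde{\varphi}_{\omega})-DF(\tilde{\varphi}_{\omega})[v]$. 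The task is then reduced to a pointwise bound on this remainder, combined with Sobolev embedding on the two-dimensional domain $\RTL$.

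In the regime $p>2$, I would use that $F\in C^2(\C\setminus\{0\})$ with $|D^2F(u)|\leq C|u|^{p-2}$ and apply Taylor's theorem with integral remainder to obtain the pointwise estimate
\[ |g(v)(x,y)| \leq C\bigl(|\tilde{\varphi}_{\omega}(x)|^{p-2}+|v(x,y)|^{p-2}\bigr)|v(x,y)|^2. \]
Since $\tilde{\varphi}_{\omega}\in L^{\infty}(\RTL)$ (by $\varphi_{\omega}\in H^2(\R)\hookrightarrow L^{\infty}(\R)$ from Proposition \ref{R-W}), taking $L^2$ norms and using the two-dimensional Sobolev embeddings $H^1(\RTL)\hookrightarrow L^4(\RTL)$ and $H^1(\RTL)\hookrightarrow L^{2p}(\RTL)$ gives $\norm{g(v)}_{L^2}\leq C(\norm{v}_{H^1}^2+\norm{v}_{H^1}^p)$.

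In the low-regularity regime $1<p\leq 2$, the function $F$ is only of class $C^{1,p-1}$, so a second-order Taylor expansion is unavailable. Instead I would invoke the H\"older estimate $|DF(u)-DF(w)|\leq C|u-w|^{p-1}$ together with the integral identity
\[ g(v) = \int_0^1\bigl(DF(\tilde{\varphi}_{\omega}+tv)-DF(\tilde{\varphi}_{\omega})\bigr)[v]\,dt \]
to conclude the pointwise bound $|g(v)(x,y)|\leq C|v(x,y)|^p$. A further application of the two-dimensional Sobolev embedding $H^1(\RTL)\hookrightarrow L^{2p}(\RTL)$ then yields $\norm{g(v)}_{L^2}\leq C\norm{v}_{H^1}^p$.

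The principal subtlety lies in the subquadratic regime $1<p\leq 2$, where the absence of a second derivative of the nonlinearity forces the argument to proceed via the H\"older regularity of $DF$ rather than via a second-order Taylor expansion; this is essentially the content of Lemma 2.4 of \cite{G V 2} cited in the statement.
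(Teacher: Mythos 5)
Your proposal is correct and follows essentially the same route as the paper: both write $g(v)$ as the integral remainder $\int_0^1\bigl(DF(\tilde{\varphi}_{\omega}+\theta v)-DF(\tilde{\varphi}_{\omega})\bigr)[v]\,d\theta$, bound the integrand via the elementary inequality $\bigl||a|^{p-1}-|b|^{p-1}\bigr|\leq |a-b|^{p-1}$ for $1<p\leq 2$ and $\leq p(|a|^{p-2}+|b|^{p-2})|a-b|$ for $p>2$, and close with two-dimensional Sobolev embeddings (the paper pairs $L^4$ with $L^4$ by H\"older where you use $L^{2p}$ and $L^{\infty}\cdot L^4$ directly, which is an immaterial difference).
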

\begin{proof}
We have
\[
||a|^{p-1}-|b|^{p-1}|\leq 
\begin{cases}
|a-b|^{p-1}, \quad 1<p\leq 2,\\
p(|a|^{p-2}+|b|^{p-2})|a-b|, \quad 2<p.
\end{cases}
\]
Since
\[g(v(x,y))=\int_0^1 (|\theta v(x,y)+\tilde{\varphi}_{\omega}(x,y)|^{p-1}-|\tilde{\varphi}_{\omega}(x,y)|^{p-1})v(x,y)d\theta,\]
\[\begin{split}
\norm{g(v)}_{L^2} & \leq \norm{\int_0^1 (|\theta v+\tilde{\varphi}_{\omega}|^{p-1}-|\tilde{\varphi}_{\omega}|^{p-1})vd\theta}_{L^2}\\
&\leq \int_0^1 \norm{|\theta v+\tilde{\varphi}_{\omega}|^{p-1}-|\tilde{\varphi}_{\omega}|^{p-1}}_{L^4}\norm{v}_{L^4}d\theta\\
& \leq 
\begin{cases}
C \norm{v}_{H^1}^{p},\quad 1<p\leq 2,\\
C (\norm{v}_{H^1}^2+\norm{v}_{H^1}^p),\quad 2<p.
\end{cases}
\end{split}
\]
\end{proof}

In the following lemma, we estimate the low frequency part of the semi-group.
\begin{lemma}\label{lem-3-2}
For a positive integer $k$ and $\varepsilon >0$, there exists $C_{k,\varepsilon  }>0$ such that
\[\norm{e^{-tJS''_{\omega}(\tilde{\varphi}_{\omega})}P_{\leq k}v}_{L^2} \leq C_{k,\varepsilon } e^{(\mu_*+\varepsilon )t}\norm{v}_{L^2}, \quad t>0, v \in L^2(\RTL).\]
\end{lemma}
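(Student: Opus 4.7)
The plan is to reduce the two-dimensional semigroup bound to a finite collection of one-dimensional bounds by the $y$-Fourier decomposition already used in the proof of Proposition \ref{eigenvalue}. Since $-JS''_\omega(\tilde\varphi_\omega)$ commutes with $y$-translations, it decomposes as the direct sum $\bigoplus_{n\in\Z}(-JS(n/L))$, and $P_{\leq k}$ retains only the modes with $|n|\leq k$. Thus it suffices to prove, for each of the $2k+1$ frequencies $a=n/L$ with $|n|\leq k$, the one-dimensional estimate
\[
\norm{e^{-tJS(a)}w}_{L^2(\R)}\leq C_{a,\varepsilon}\,e^{(\mu(a)+\varepsilon)t}\norm{w}_{L^2(\R)},
\]
where $\mu(a):=\sup\{\re\lambda:\lambda\in\sigma(-JS(a))\}\leq\mu_*$; summing these $2k+1$ estimates and using orthogonality of the Fourier modes in $y$ yields the claim.

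For each fixed $a$ I would first locate the spectrum of $-JS(a)$. The free piece, $-J$ acting on two copies of $-\partial_x^2+\omega+a^2$, has purely imaginary spectrum. The remaining piece is $-J$ times a multiplication operator with entries built from $V$ and $|\varphi_\omega|^{p-1}$, both of which decay exponentially: the former by assumption (V1), the latter inherited from the exponential decay of $\psi_*$ through Proposition \ref{R-W}. Hence it is a relatively compact perturbation of the free piece, so Weyl's theorem yields $\sigma_{\mathrm{ess}}(-JS(a))\subset i\R$. Combined with Proposition \ref{eigenvalue}(iv), the spectrum of $-JS(a)$ in $\{\re\lambda>0\}$ consists of finitely many eigenvalues of finite algebraic multiplicity.

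Given this spectral picture, I would apply the Riesz spectral projection to split the phase space into a finite-dimensional invariant subspace $E_u$ carrying the eigenvalues with $\re\lambda>\varepsilon/2$ and a complementary invariant subspace $E_s$ on which the spectrum satisfies $\re\lambda\leq\varepsilon/2$. On $E_u$ the semigroup is a finite matrix exponential whose norm is bounded by $C(1+t)^m e^{\mu(a)t}\leq Ce^{(\mu(a)+\varepsilon)t}$, absorbing any Jordan block contribution. On $E_s$, the Gearhart-Pr\"uss theorem provides the bound $Ce^{\varepsilon t}$, provided a uniform resolvent bound $\sup_{\re\lambda\geq\varepsilon}\norm{(\lambda+JS(a))^{-1}}_{E_s\to E_s}<\infty$ is known; combining the two estimates gives the one-dimensional bound above.

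The main obstacle is establishing this uniform resolvent estimate on $E_s$. Because $-JS(a)$ is non-self-adjoint, a spectral gap does not automatically yield the corresponding semigroup bound, and one needs quantitative control of the resolvent along the whole vertical line $\re\lambda=\varepsilon$, in particular as $|\im\lambda|\to\infty$. I would achieve this via a resolvent identity comparing $-JS(a)$ to the free operator $-J(-\partial_x^2+\omega+a^2)$, whose resolvent is explicit and decays in $|\im\lambda|$, using that the difference is a multiplication operator built from exponentially decaying functions. It is precisely at this step that (V1) enters decisively, as the remark following Theorem \ref{main result 1} emphasizes, and the argument parallels the corresponding construction in \cite{YY1}.
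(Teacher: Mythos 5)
Your proposal is correct, and the reduction via the $y$-Fourier decomposition to the $2k+1$ one-dimensional operators $-JS(n/L)$ is exactly the first step of the paper's proof. Where you diverge is in how the spectral bound for $-JS(a)$ is converted into a semigroup growth bound: the paper invokes a spectral mapping theorem $\sigma(e^{-JS(a)})=e^{\sigma(-JS(a))}$ (quoting \cite{G O} and Lemma 6 of \cite{G J L S}, whose hypotheses are met thanks to the exponential decay of $V$ and $\tilde{\varphi}_{\omega}$) and then applies Lemmas 2 and 3 of \cite{S S}, which bound $\norm{e^{-tJS(a)}}$ by $C_{\varepsilon}e^{(\mu_*+\varepsilon)t}$ once the spectral radius of the time-one map is controlled. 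You instead go through Weyl's theorem, a Riesz projection onto the finitely many unstable eigenvalues, and Gearhart--Pr\"uss on the complementary subspace. Both routes are legitimate and rest on the same analytic input (the relative compactness/smoothing of the multiplication part coming from (V1) and the decay of $\varphi_{\omega}$); your route makes the hard step explicit, while the paper outsources it to the cited spectral mapping theorem, which is itself proved by essentially the resolvent-identity argument you sketch. One small correction to your last paragraph: the free resolvent $(\lambda+JH_0)^{-1}$ does \emph{not} decay in operator norm as $\abs{\im \lambda}\to\infty$ along the line $\re\lambda=\varepsilon$ (its norm stays of size $1/\varepsilon$ near the quasi-resonant frequencies $\xi^2\approx\abs{\im\lambda}$); what actually tends to zero, and what closes the Neumann series, is the composition of the exponentially localized multiplication operator with the free resolvent (for instance in Hilbert--Schmidt norm). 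With that adjustment your argument goes through.
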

\begin{proof}
By the definition of $S(a)$, we have
\[-JS(a)=
\begin{pmatrix}
0 & -\partial_x^2 + \omega+a^2 +V -|\tilde{\varphi}_{\omega}|^{p-1}\\
\partial_x^2-\omega-a^2-V+p|\tilde{\varphi}_{\omega}|^{p-1} & 0
\end{pmatrix}
.
\]
Using the exponential decay rates of $V$ and $\tilde{\varphi}_{\omega}$ and applying the argument for the proof of Proposition \cite{G O} and Lemma 6 in \cite{G J L S}, we obtain 
\[\sigma(e^{-JS(a)})=e^{\sigma(-JS(a))}.\]
By the definition of $\mu_*$, we have that the spectral radius of $e^{-JS(n/L)}$ is less than or equal to $e^{\mu_*}$ for $n \in \Z$.
Therefore, by Lemma 2 and Lemma 3 in \cite{S S} we have
\[\norm{e^{-tJS(n/L)}v}_{L^2(\R)} \leq C_{n,\varepsilon } e^{(\mu_*+\varepsilon )t} \norm{v}_{L^2(\R)}, \quad t>0,n \in \Z,v \in L^2(\R).\]
Hence, for $t>0$ and $v \in L^2(\RTL)$,
\[\norm{e^{-tJS_{\omega}''(\tilde{\varphi}_{\omega})}P_{\leq k}v}_{L^2} \leq \norm{\sum_{n=-k}^k e^{-tJS(n/L)}v_ne^{\frac{iny}{L}}}_{L^2}\leq C_{k,\varepsilon }e^{(\mu_* +\varepsilon )t} \norm{v}_{L^2},\]
where
\[v(x,y)=\sum_{n\in \Z} v_n(x)e^{\frac{iny}{L}}.\]
\end{proof}

In the following lemma, we estimate the high frequency part of $v_{\delta}(t)$.
\begin{lemma}\label{lem-3-3}
There exist a positive integer $K_0$ and $C>0$ such that for $\delta>0$ and $t>0$
\[\norm{v_{\delta}(t)}_{H^1} \leq C\norm{P_{\leq K_0}v_{\delta}(t)}_{L^2} + o(\delta)+o(\norm{v_{\delta}(t)}_{H^1}).\]
\end{lemma}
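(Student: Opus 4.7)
The plan is to derive the estimate from conservation of the action $S_\omega$ along \eqref{NLS}, combined with the fact that $\chi$ is an eigenfunction of $-JS_\omega''(\tilde\varphi_\omega)$. Since $u_\delta(t)=e^{i\omega t}(\tilde\varphi_\omega+v_\delta(t))$ and both $E$ and $Q$ are invariant under multiplication by a global phase factor, conservation of $E$ and $Q$ gives $S_\omega(\tilde\varphi_\omega+v_\delta(t)) = S_\omega(\tilde\varphi_\omega+\delta\chi)$. Taylor-expanding both sides about $\tilde\varphi_\omega$ and using $S_\omega'(\tilde\varphi_\omega)=0$, this identity rewrites as
\[ \tbr{S_\omega''(\tilde\varphi_\omega)v_\delta(t),v_\delta(t)}_{H^{-1},H^1} - \delta^2\tbr{S_\omega''(\tilde\varphi_\omega)\chi,\chi}_{H^{-1},H^1} = 2R(\delta\chi) - 2R(v_\delta(t)), \]
where $R(w)$ is the cubic-and-higher Taylor remainder, arising only from the $|u|^{p+1}/(p+1)$ term. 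A pointwise expansion of $|\tilde\varphi_\omega+w|^{p+1}$ combined with Sobolev embedding gives $|R(w)|\leq C(\norm{w}_{H^1}^3+\norm{w}_{H^1}^{p+1})$, hence $R(\delta\chi)=o(\delta^2)$ as $\delta\to 0$ and $R(v_\delta(t))=o(\norm{v_\delta(t)}_{H^1}^2)$ in the small-perturbation regime.

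The crucial observation is that $\tbr{S_\omega''(\tilde\varphi_\omega)\chi,\chi}_{H^{-1},H^1}=0$. Indeed, since $-JS_\omega''(\tilde\varphi_\omega)\chi=\mu_*\chi$ and $J^{-1}=-J$, one has $S_\omega''(\tilde\varphi_\omega)\chi=\mu_*J\chi$, and then $\tbr{J\chi,\chi}_{L^2}=\re\int_{\RTL}i|\chi|^2=0$. Substituting this cancellation and the remainder bounds back yields
\[ \abs{\tbr{S_\omega''(\tilde\varphi_\omega)v_\delta(t),v_\delta(t)}_{H^{-1},H^1}} = o(\delta^2) + o(\norm{v_\delta(t)}_{H^1}^2). \]

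To convert this scalar bound into an $H^1$-norm estimate, I Fourier-expand in $y$ and use the elementary pointwise lower bound
\[ \tbr{S(n/L)w,w}_{H^{-1}(\R),H^1(\R)} \geq \norm{\partial_x w}_{L^2}^2 - C_0\norm{w}_{L^2}^2 + (n/L)^2\norm{w}_{L^2}^2, \qquad n\in\Z,\ w\in H^1(\R), \]
where $C_0$ depends only on $\omega$, $\norm{V}_{L^{\infty}}$, and $\norm{\varphi_\omega}_{L^{\infty}}^{p-1}$. Summing in $n$ produces $\norm{\nabla v}_{L^2}^2 \leq \tbr{S_\omega''(\tilde\varphi_\omega)v,v}_{H^{-1},H^1}+C_0\norm{v}_{L^2}^2$, while keeping only the $\partial_y$-contribution on high frequencies produces $(K_0/L)^2\norm{P_{>K_0}v}_{L^2}^2 \leq \tbr{S_\omega''(\tilde\varphi_\omega)v,v}_{H^{-1},H^1}+C_0\norm{v}_{L^2}^2$. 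Choosing $K_0$ with $(K_0/L)^2>2C_0$ absorbs $\norm{P_{>K_0}v}_{L^2}^2$ on the right, and combining the two displays gives
\[ \norm{v}_{H^1}^2 \leq C_1\norm{P_{\leq K_0}v}_{L^2}^2 + C_2\tbr{S_\omega''(\tilde\varphi_\omega)v,v}_{H^{-1},H^1}. \]
Setting $v=v_\delta(t)$ and inserting the bound from the preceding paragraph yields $\norm{v_\delta(t)}_{H^1}^2 \leq C_1\norm{P_{\leq K_0}v_\delta(t)}_{L^2}^2 + o(\delta^2)+o(\norm{v_\delta(t)}_{H^1}^2)$, and extracting square roots via $\sqrt{a+b+c}\leq \sqrt a+\sqrt b+\sqrt c$ concludes the proof.

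The main technical point is the cancellation $\tbr{S_\omega''(\tilde\varphi_\omega)\chi,\chi}=0$: it is precisely what upgrades a crude $O(\delta)$ error on the right-hand side to the required $o(\delta)$ error, without which the lemma would be too weak to drive the subsequent instability bootstrap using Lemma \ref{lem-3-2}. A secondary subtlety is that $P_{\leq K_0}v_\delta(t)$ retains full $x$-regularity, so its $H^1$ norm is \emph{not} controlled by its $L^2$ norm through a simple frequency-cutoff argument; the missing $\partial_x$ control is instead routed through the quadratic form $\tbr{S_\omega''(\tilde\varphi_\omega)v_\delta(t),v_\delta(t)}$ via the pointwise coercivity of $S(n/L)$.
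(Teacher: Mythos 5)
Your proof is correct and follows essentially the same route as the paper: conservation of $S_{\omega}$ plus the cancellation $\tbr{S_{\omega}''(\tilde{\varphi}_{\omega})\chi,\chi}=0$ (from $S_{\omega}''(\tilde{\varphi}_{\omega})\chi=\mu_*J\chi$) gives $\tbr{S_{\omega}''(\tilde{\varphi}_{\omega})v_{\delta}(t),v_{\delta}(t)}=o(\delta^2)+o(\norm{v_{\delta}(t)}_{H^1}^2)$, and the G{\aa}rding-type lower bound for $S(n/L)$ together with the $(n/L)^2$ gap at high transverse frequencies converts this into the claimed estimate. The only differences are cosmetic (your $K_0$ is chosen by $(K_0/L)^2>2C_0$ rather than as the integer part of $1+L(\lambda_{\omega})^{1/2}$, and your absorption of the high-frequency $L^2$ mass is written out more explicitly), so no further comment is needed.
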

\begin{proof}
By the Taylor expansion we have that for $v \in H^1(\RTL)$
\[S_{\omega}(\tilde{\varphi}_{\omega}+v)=S_{\omega}(\tilde{\varphi}_{\omega}) + \tbr{S_{\omega}'(\tilde{\varphi}_{\omega}),v}_{H^{-1},H^1} + \frac{1}{2}\tbr{S_{\omega}''(\tilde{\varphi}_{\omega})v,v}_{H^{-1},H^1}+o(\norm{v}_{H^1}^2).
\]
Since $S_{\omega}$ is conservation law, we have $S_{\omega}(\tilde{\varphi}_{\omega}+\delta \chi)=S_{\omega}(\tilde{\varphi}_{\omega}+v_{\delta}(t))$ for $t\geq 0$.
Using $S'(\tilde{\varphi}_{\omega})=0$ and
\[\tbr{S_{\omega}''(\tilde{\varphi}_{\omega})\chi,\chi}_{H^{-1},H^1}=\tbr{-JS''_{\omega}(\tilde{\varphi}_{\omega})\chi,J^{-1}\chi}_{H^{-1},H^1}=\tbr{\mu_* \chi, J^{-1}\chi}_{L^2}=0,\]
we have
\[\tbr{S_{\omega}''(\tilde{\varphi}_{\omega})v_{\delta}(t),v_{\delta}(t)}_{H^{-1},H^1} = o(\norm{v_{\delta}(t)}_{H^1}^2)+o(\delta^2).\]
We define $K_0$ as the integer part of $1+L(\lambda_{\omega})^{1/2}$.
Since $S(a)$ is positive for $a>(\lambda_{\omega})^{1/2}$, we obtain $S_{\omega}''(\tilde{\varphi}_{\omega})(I-P_{\leq K_0})$ is positive.
By the definition of $S(a)$ there exist $c,C>0$ such that
\[ \tbr{S(a)v,v}_{H^{-1}(\R),H^1(\R)} \geq c\norm{v}_{H^1(\R)}^2-C\norm{v}_{L^2(\R)}^2\]
for $v \in H^1(\R)$ and $a \in \R$.
Thus, 
\[ \begin{split}
\norm{v_{\delta}(t)}_{H^1}^2&=\norm{P_{\leq K_0}v_{\delta}(t)}_{H^1}^2 + \norm{(I-P_{\leq K_0})v_{\delta}(t)}_{H^1}^2\\
&\leq C'\tbr{S_{\omega}''(\tilde{\varphi}_{\omega})(I-P_{\leq K_0})v_{\delta}(t),(I-P_{\leq K_0})v_{\delta}(t)}_{H^{-1},H^1} \\
& \quad + C'\tbr{S_{\omega}''(\tilde{\varphi}_{\omega})P_{\leq K_0}v_{\delta}(t),P_{\leq K_0}v_{\delta}(t)}_{H^{-1},H^1} + C''\norm{P_{\leq K_0}v_{\delta}(t)}_{H^1}^2\\
&\leq C''\norm{v_{\delta}(t)}_{L^2}^2 + o(\delta^2) + o(\norm{v_{\delta}(t)}_{H^1}^2).
\end{split}\] 
\end{proof}

Let $\varepsilon  _0 =\min\{(p-1)\mu_*/2,\mu_*/2\}$.
By Lemma \ref{lem-3-1}, Lemma \ref{lem-3-2} and Lemma \ref{lem-3-3}, we obtain that
\[\begin{split}
\norm{v_{\delta}(t)}_{H^1} &\leq C \delta e^{\mu_*t}\norm{\chi}_{L^2} + C\int_0^t\norm{e^{-(t-s)JS_{\omega}(\tilde{\varphi}_{\omega})}P_{\leq K_0}g(v_{\delta}(s))}_{L^2} ds + o(\delta) + o(\norm{v_{\delta}(t)}_{H^1})\\
& \leq C\delta e^{\mu_* t} + C \int_0^t e^{(1+\varepsilon _0)\mu_*(t-s)}(\norm{v_{\delta}(s)}_{H^1}^2+\norm{v_{\delta}(s)}_{H^1}^p)ds+o(\delta) + o(\norm{v_{\delta}(t)}_{H^1}).
\end{split}\]
There exists $C_0>0$ such that for small $\delta>0$ and $\varepsilon_1 >0$ 
\[\norm{v_{\delta}(t)}_{H^1} \leq C_0 e^{\mu_* t}, \quad \mbox{ for } t \in [0,T_{\varepsilon_1 ,\delta}],\]
where
\[T_{\varepsilon _1,\delta}=\frac{\log (\varepsilon _1/\delta)}{\mu_*}.\]
Then,
\[ \begin{split}
|\tbr{\chi,v_{\delta}(T_{\varepsilon _1,\delta})}_{L^2}|&=\l| \delta e^{\mu_* T_{\varepsilon _1, \delta}}+\int_0^{T_{\varepsilon _1,\delta}}\tbr{\chi,-Je^{-(T_{\varepsilon _1,\delta}-s)S_{\omega}''(\tilde{\varphi}_{\omega})}g(v_{\delta}(s)}_{L^2}ds \r|\\
&\geq \varepsilon _1 -C \int_0^{T_{\varepsilon _1,\delta}} e^{(T_{\varepsilon _1,\delta}-s)\mu_*}(\norm{v_{\delta}(s)}_{H^1}^2+\norm{v_{\delta}(s)}_{H^1}^p)ds\\
& \geq \varepsilon _1 - C \varepsilon_1 ^{\min \{p,2\}}.
\end{split}\]
Since $P_{\leq 0} \tilde{\varphi}_{\omega}=\tilde{\varphi}_{\omega}$,
there exists $\varepsilon _1 >0$ such that for $\varepsilon _1 > \delta >0$ and $\theta \in \R$
\[\begin{split}
\norm{u_{\delta}(T_{\varepsilon _1,\delta})-e^{i\theta}\tilde{\varphi}_{\omega}}_{L^2}
&\geq \norm{(I-P_{\leq 0}) (u_{\delta}(T_{\varepsilon _1,\delta})-e^{i\theta}\tilde{\varphi}_{\omega})}_{L^2}\\
&= \norm{(I-P_{\leq 0})e^{-i\omega T_{\varepsilon _1,\delta}}u_{\delta}(T_{\varepsilon _1,\delta})}_{L^2}\\
&= \norm{(I-P_{\leq 0})(e^{-i\omega T_{\varepsilon _1,\delta}}u_{\delta}(T_{\varepsilon _1,\delta})-\tilde{\varphi}_{\omega})}_{L^2}.
\end{split}\]
By the definition of $\chi$ we have
\[\norm{(P_{\leq k_0}-P_{\leq k_0-1})v}_{L^2} \geq |\tbr{\chi,v}_{L^2}|, \quad \mbox{ for } v \in L^2(\RTL).\]
Therefore, 
\[\norm{(I-P_{\leq 0})(e^{-i\omega T_{\varepsilon _1,\delta}}u_{\delta}(T_{\varepsilon _1,\delta})-\tilde{\varphi}_{\omega})}_{L^2} \geq |\tbr{\chi,v_{\delta}(T_{\varepsilon _1,\delta})}_{L^2}| \geq \frac{\varepsilon _1}{2}.\]
This implies that the standing wave $e^{i\omega t}\tilde{\varphi}_{\omega}$ is unstable.

\section{Proof of Theorem \ref{main result 2}}

In this section, we assume $L=(\lambda_{\omega_0})^{-1/2}$ for $0<\omega_0 -\lambda_* \ll 1$.

The following lemma follows Proposition \ref{R-W}.

\begin{lemma}\label{l-4-1}
\begin{equation}
\begin{split}
\varphi_{\omega}&=\norm{\psi_*}_{L^{p+1}(\R)}^{-\frac{p+1}{p-1}}(\omega-\lambda_*)^{\frac{1}{p-1}} \psi_* + \norm{\psi_*}_{L^{p+1}(\R)}^{-\frac{p+1}{p-1}} (\omega-\lambda_*)^{\frac{1}{p-1}+1}(P_{\bot}^1H_{\lambda_*}P_{\bot}^1)^{-1}\psi_{*,p} \\
& \quad +o((\omega-\lambda_*)^{\frac{1}{p-1}+1}),\\
\partial_{\omega}\varphi_{\omega}&=\frac{\norm{\psi_*}_{L^{p+1}(\R)}^{-\frac{p+1}{p-1}}}{p-1}(\omega-\lambda_*)^{\frac{1}{p-1}-1} \psi_* + \frac{p\norm{\psi_*}_{L^{p+1}(\R)}^{-\frac{p+1}{p-1}}}{p-1} (\omega-\lambda_*)^{\frac{1}{p-1}}(P_{\bot}^1H_{\lambda_*}P_{\bot}^1)^{-1}\psi_{*,p} \\
&\quad +o((\omega-\lambda_*)^{\frac{1}{p-1}}).\\
\end{split}
\end{equation}
where $\psi_{*,p}= \norm{\psi_*}_{L^{p+1}(\R)}^{-(p+1)} \psi_*^p-\psi_*$,  $H_a=-\partial_x^2 + a +V$ for $a \in \R$ and $P_{\bot}^1$ is the orthogonal projection onto $(\psi_*)^{\bot}=\{ u \in L^2(\R) | \tbr{u,\psi_*}_{L^2(\R)}=0\}$.
\end{lemma}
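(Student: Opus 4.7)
The plan is to refine the expansion from Proposition \ref{R-W} by one order. Writing $\varphi_\omega$ as a positive real solution of $H_\omega \varphi_\omega = \varphi_\omega^p$, where $H_\omega = -\partial_x^2 + V + \omega$, I would set up a Lyapunov–Schmidt-style decomposition
\[
\varphi_\omega = \alpha(\omega)\psi_* + \rho(\omega), \qquad \rho(\omega) \in (\psi_*)^\bot,
\]
with $\alpha(\omega) = \tbr{\varphi_\omega,\psi_*}_{L^2(\R)}$. Since $\psi_*$ is an eigenfunction of $H_\omega$ with eigenvalue $\omega-\lambda_*$, the operator $H_\omega$ preserves the splitting $\R\psi_* \oplus (\psi_*)^\bot$. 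The projections of the equation onto $\psi_*$ and onto $(\psi_*)^\bot$ then read
\[
(\omega-\lambda_*)\alpha = \tbr{\varphi_\omega^p,\psi_*}_{L^2(\R)}, \qquad H_\omega \rho = P_\bot^1(\varphi_\omega^p).
\]

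For the leading term of $\alpha$, I substitute $\varphi_\omega = \alpha\psi_* + \rho$ into the first identity. Using the bound $\norm{r(\omega)}_{H^2}=O((\omega-\lambda_*)^{1/(p-1)+1})$ from Proposition \ref{R-W} (which controls $\rho$), the contribution of $\rho$ is of higher order, and one recovers $\alpha=\norm{\psi_*}_{L^{p+1}}^{-(p+1)/(p-1)}(\omega-\lambda_*)^{1/(p-1)}$ to leading order, matching Proposition \ref{R-W}. For $\rho$, expanding $\varphi_\omega^p = \alpha^p\psi_*^p + O(\alpha^{p-1}\rho)$, the range equation becomes
\[
H_\omega\rho = \alpha^p\psi_*^p - \tbr{\alpha^p\psi_*^p,\psi_*}_{L^2(\R)}\psi_* + (\mbox{higher order}) = \alpha^p\norm{\psi_*}_{L^{p+1}}^{p+1}\psi_{*,p} + (\mbox{higher order}).
\]
Here I used $\psi_*^p - \norm{\psi_*}_{L^{p+1}}^{p+1}\psi_* = \norm{\psi_*}_{L^{p+1}}^{p+1}\psi_{*,p}$, and one checks $\tbr{\psi_{*,p},\psi_*}_{L^2(\R)}=0$ directly, so $\psi_{*,p}\in(\psi_*)^\bot$. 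Since $H_{\lambda_*}$ is invertible on $(\psi_*)^\bot$ (the kernel of $H_{\lambda_*}$ is spanned by $\psi_*$), I can apply $(P_\bot^1 H_{\lambda_*}P_\bot^1)^{-1}$; the error from replacing $H_\omega$ by $H_{\lambda_*}$ is of order $(\omega-\lambda_*)$ times $\rho$, which is absorbed into the $o$-term. Inserting $\alpha^p\norm{\psi_*}_{L^{p+1}}^{p+1} = \norm{\psi_*}_{L^{p+1}}^{-(p+1)/(p-1)}(\omega-\lambda_*)^{1+1/(p-1)}$ yields the first expansion in the lemma.

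For the expansion of $\partial_\omega\varphi_\omega$, I would invoke the $C^1$-dependence of $\varphi_\omega$ on $\omega$, which follows from the implicit function theorem underlying Rose–Weinstein's bifurcation construction (the linearization $L_\omega^+$ is invertible away from its kernel, and the reduced bifurcation equation is smooth in $\omega$). Differentiating term by term in the first expansion gives exactly the stated formula: the $\psi_*$-term contributes $\frac{1}{p-1}\norm{\psi_*}_{L^{p+1}}^{-(p+1)/(p-1)}(\omega-\lambda_*)^{1/(p-1)-1}\psi_*$, the next-order term differentiates with the prefactor $1+\tfrac{1}{p-1}=\tfrac{p}{p-1}$, and the remainder becomes $o((\omega-\lambda_*)^{1/(p-1)})$.

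The main technical point to be careful about is the justification that the $o((\omega-\lambda_*)^{1/(p-1)+1})$-remainder for $\varphi_\omega$ transfers to an $o((\omega-\lambda_*)^{1/(p-1)})$-remainder for $\partial_\omega\varphi_\omega$; this is not automatic from just a pointwise bound on $r(\omega)$, and one needs the smoothness of the Lyapunov–Schmidt reduction (rather than mere continuity) to differentiate the error term. Once the smooth parametrization from the implicit function theorem is in place, the remaining computation is a direct Taylor expansion of the range equation in the small parameter $(\omega-\lambda_*)^{1/(p-1)}$.
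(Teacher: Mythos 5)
Your derivation of the first expansion is essentially the paper's own route: a Lyapunov--Schmidt splitting along $\psi_*$, the identity $P_{\bot}^1(\psi_*^p)=\norm{\psi_*}_{L^{p+1}(\R)}^{p+1}\psi_{*,p}$, and inversion of $H_{\lambda_*}$ on $(\psi_*)^\bot$; the power counting giving the coefficient $\norm{\psi_*}_{L^{p+1}(\R)}^{-\frac{p+1}{p-1}}(\omega-\lambda_*)^{\frac{1}{p-1}+1}$ is correct. The gap is in the second expansion, which is the half of the lemma the paper's proof is actually devoted to. You propose to obtain $\partial_\omega\varphi_\omega$ by differentiating the asymptotic expansion of $\varphi_\omega$ term by term, and you correctly flag that this is not automatic -- but you then leave the justification at the level of ``one needs the smoothness of the Lyapunov--Schmidt reduction,'' which does not close the gap: a $C^1$ family satisfying $\varphi_\omega=A(\omega)+o((\omega-\lambda_*)^{\frac{1}{p-1}+1})$ need not satisfy $\partial_\omega\varphi_\omega=A'(\omega)+o((\omega-\lambda_*)^{\frac{1}{p-1}})$, however smooth the parametrization is, because smoothness gives no quantitative control on the derivative of the remainder. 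The paper avoids this by reversing the order of operations: it sets $\varphi_{\omega,0}=(\omega-\lambda_*)^{-1/(p-1)}\varphi_\omega=\norm{\psi_*}_{L^{p+1}(\R)}^{-\frac{p+1}{p-1}}\psi_*+\tilde r$, differentiates the rescaled equation $(-\partial_x^2+\omega+V)\varphi_{\omega,0}-|\varphi_\omega|^{p-1}\varphi_{\omega,0}=0$ in $\omega$, and solves the resulting linear equation for $\partial_\omega\tilde r=\norm{\psi_*}_{L^{p+1}(\R)}^{-\frac{p+1}{p-1}}(P_{\bot}^1H_{\lambda_*}P_{\bot}^1)^{-1}\psi_{*,p}+o(1)$; both displayed expansions then follow from this single identity (the first by integrating from $\lambda_*$, the second by the product rule applied to $(\omega-\lambda_*)^{1/(p-1)}\varphi_{\omega,0}$). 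To repair your argument you should do the analogous thing: differentiate your range equation $H_\omega\rho=P_{\bot}^1(\varphi_\omega^p)$ and your scalar equation for $\alpha$ with respect to $\omega$, and read off the expansions of $\rho'$ and $\alpha'$ from the differentiated equations rather than from the expansions of $\rho$ and $\alpha$.

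A secondary point: you determine $\alpha(\omega)=\tbr{\varphi_\omega,\psi_*}_{L^2(\R)}$ only ``to leading order,'' but the stated remainder $o((\omega-\lambda_*)^{\frac{1}{p-1}+1})$ also constrains the $\psi_*$-component of $\varphi_\omega$ at the next order. In your decomposition the scalar equation reads $(\omega-\lambda_*)\alpha=\alpha^p\norm{\psi_*}_{L^{p+1}(\R)}^{p+1}+p\alpha^{p-1}\tbr{\psi_*^{p-1}\rho,\psi_*}_{L^2(\R)}+\cdots$, and since $\rho=O(\alpha^p)$ the second term perturbs $\alpha$ at exactly the order $(\omega-\lambda_*)^{\frac{1}{p-1}+1}$ of the claimed error, so it cannot simply be declared higher order; this contribution must be computed (or shown to be absorbed by the normalization of the bifurcation branch, which is what the paper's assertion $\tbr{\psi_*,\tilde r}_{L^2(\R)}=0$ is doing) before the first expansion is complete at the stated accuracy.
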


\begin{proof}
Let
\[ \varphi_{\omega,0}=(\omega-\lambda_*)^{-\frac{1}{p-1}}\varphi_{\omega}=\norm{\psi_*}_{L^{p+1}(\R)}^{-\frac{p+1}{p-1}}\psi_* + \tilde{r}.\]
By the bifurcation argument, $\tbr{\psi_*,\tilde{r}(\omega)}_{L^2(\R)}=0$.
Since $\varphi_{\omega}$ is $C^1$ with respect to $\omega$ and
\[ (-\partial_x^2 + \omega +V) \varphi_{\omega,0} - |\varphi_{\omega}|^{p-1}\varphi_{\omega,0} =0, \]
we have
\[ \begin{split}
0&=\partial_{\omega}\bigl( (-\partial_x^2 + \omega +V) \varphi_{\omega,0} - |\varphi_{\omega}|^{p-1}\varphi_{\omega,0} \bigr)\\
&=\norm{\psi_*}_{L^{p+1}(\R)}^{-\frac{p+1}{p-1}} \psi_* + \tilde{r} + H_{\omega} \partial_{\omega} \tilde{r} -\l|\norm{\psi_*}_{L^{p+1}(\R)}^{-\frac{p+1}{p-1}} \psi_* + \tilde{r}  \r|^{p-1}\l(\norm{\psi_*}_{L^{p+1}(\R)}^{-\frac{p+1}{p-1}}\psi_* + \tilde{r}\r) \\
& \quad -p(\omega -\lambda_*)\l|\norm{\psi_*}_{L^{p+1}(\R)}^{-\frac{p+1}{p-1}} \psi_* + \tilde{r}  \r|^{p-1} \partial_{\omega} \tilde{r}.
\end{split} \]
From $\tbr{\psi_*,\tilde{r}(\omega)}_{L^2(\R)}=\tbr{\psi_*,\norm{\psi_*}_{L^{p+1}(\R)}^{-(p+1)} \psi_*^p-\psi_*}_{L^2(\R)}=0$, we obtain 
\[ \partial_{\omega} \tilde{r} = \norm{\psi_*}_{L^{p+1}(\R)}^{-\frac{p+1}{p-1}}( P_{\bot}^1 H_{\lambda_*} P_{\bot}^1)^{-1} (\norm{\psi_*}_{L^{p+1}(\R)}^{-(p+1)} \psi_*^p-\psi_*) +o(1).\]
\end{proof}

In the following lemma, we obtain the derivative of the eigenvalue $\lambda_{\omega}$.
\begin{lemma}\label{l-4-2}
Let $p\geq 2$.
Then,
\begin{equation*}
\psi_{\omega}= \psi_*+ p(\omega-\lambda_*)(P_{\bot}^1H_{\lambda_*}P_{\bot}^1)^{-1}\psi_{*,p} +O((\omega-\lambda_*)^2),
\end{equation*}
\begin{equation*}
\begin{split}
\lambda_{\omega}=&(p-1)(\omega-\lambda_*)+p(2p-1)(\omega-\lambda_*)^2\norm{\psi_*}_{L^{p+1}(\R)}^{-(p+1)}\int_{\R} \psi_*^p (P_{\bot}^1 H_{\lambda_*} P_{\bot}^1)^{-1} \psi_{*,p}dx \\
&+ o((\omega-\lambda_*)^2).
\end{split}
\end{equation*}

\end{lemma}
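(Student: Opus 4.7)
The plan is to solve the eigenvalue problem $L^+_\omega \psi_\omega = -\lambda_\omega \psi_\omega$ by standard Rayleigh--Schr\"odinger perturbation theory around the unperturbed operator $H_{\lambda_*} = -\partial_x^2 + V + \lambda_*$, whose kernel is spanned by $\psi_*$. Setting $\epsilon := \omega - \lambda_*$ and writing $L^+_\omega = H_{\lambda_*} + \epsilon - p|\varphi_\omega|^{p-1}$, I would postulate expansions
\[
\psi_\omega = \psi_* + \epsilon \psi_1 + \epsilon^2 \psi_2 + o(\epsilon^2), \qquad \lambda_\omega = \mu_1 \epsilon + \mu_2 \epsilon^2 + o(\epsilon^2),
\]
where $\psi_j \in (\psi_*)^\bot$ (this is consistent with the $L^2$ normalization up to corrections of order $\epsilon^2$, which only enter at higher order in $\psi_\omega$). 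Using Lemma \ref{l-4-1} and the hypothesis $p\geq 2$ (so $t \mapsto t^{p-1}$ is $C^1$ at positive values), I would Taylor expand
\[
\varphi_\omega^{p-1} = \epsilon\,\|\psi_*\|_{L^{p+1}}^{-(p+1)} \psi_*^{p-1} + \epsilon^2 (p-1)\,\|\psi_*\|_{L^{p+1}}^{-(p+1)} \psi_*^{p-2} \eta + o(\epsilon^2),
\]
where $\eta := (P^1_\bot H_{\lambda_*} P^1_\bot)^{-1} \psi_{*,p}$.

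Plugging these expansions into $(L^+_\omega + \lambda_\omega)\psi_\omega = 0$ and collecting powers of $\epsilon$, the order-$\epsilon$ equation reads
\[
H_{\lambda_*} \psi_1 = p\,\|\psi_*\|_{L^{p+1}}^{-(p+1)} \psi_*^p - (1 + \mu_1) \psi_*.
\]
The Fredholm solvability condition (pairing with $\psi_*$) together with the identity $\|\psi_*\|_{L^{p+1}}^{-(p+1)} \|\psi_*\|_{L^{p+1}}^{p+1} = 1$ yields $\mu_1 = p-1$; the right-hand side then collapses to $p\,\psi_{*,p}$, which lies in $(\psi_*)^\bot$ (a quick check: $\langle \psi_{*,p}, \psi_*\rangle = \|\psi_*\|_{L^{p+1}}^{-(p+1)} \|\psi_*\|_{L^{p+1}}^{p+1} - 1 = 0$). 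Inverting $H_{\lambda_*}$ on the orthogonal complement gives $\psi_1 = p\,\eta$, which is the asserted first-order correction to $\psi_\omega$.

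At order $\epsilon^2$ I obtain
\[
H_{\lambda_*}\psi_2 = -\psi_1 - \mu_1 \psi_1 - \mu_2 \psi_* + p\,\|\psi_*\|_{L^{p+1}}^{-(p+1)} \psi_*^{p-1} \psi_1 + p(p-1)\,\|\psi_*\|_{L^{p+1}}^{-(p+1)} \psi_*^{p-1}\eta.
\]
Taking the inner product with $\psi_*$, the $H_{\lambda_*}\psi_2$ term drops by self-adjointness, the $\psi_1$ terms vanish by orthogonality, and after substituting $\psi_1 = p\eta$ the resulting identity is
\[
\mu_2 = \bigl[p^2 + p(p-1)\bigr] \|\psi_*\|_{L^{p+1}}^{-(p+1)} \int_\R \psi_*^p\, \eta\,dx = p(2p-1)\,\|\psi_*\|_{L^{p+1}}^{-(p+1)} \int_\R \psi_*^p (P^1_\bot H_{\lambda_*} P^1_\bot)^{-1}\psi_{*,p}\,dx,
\]
which is the claimed coefficient of $\epsilon^2$ in $\lambda_\omega$.

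The main technical obstacle is the rigorous justification of the formal expansion: I must verify that the $o(\epsilon^{1/(p-1)+1})$ remainder in Lemma \ref{l-4-1} translates into an $o(\epsilon^2)$ remainder after raising to the $(p-1)$-th power, and that the eigenpair $(\psi_\omega, \lambda_\omega)$ depends on $\omega$ with enough regularity near $\lambda_*$ to justify the two-term Taylor expansion of both quantities. The first point uses $p \geq 2$ together with the positivity and exponential decay of $\psi_*$; the second follows from the analytic dependence of the simple isolated eigenvalue $-\lambda_\omega$ of $L^+_\omega$ on the parameter $\omega$ via standard Kato perturbation theory, given that $\varphi_\omega$ itself is $C^1$ in $\omega$ (as used in Lemma \ref{l-4-1}). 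Everything else is algebra built on the solvability condition provided by the Fredholm alternative for $H_{\lambda_*}$.
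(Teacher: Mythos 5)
Your computation is correct and lands on exactly the coefficients in the statement: the order-$\epsilon$ solvability condition gives $\mu_1=p-1$ and $\psi_1=p(P_{\bot}^1H_{\lambda_*}P_{\bot}^1)^{-1}\psi_{*,p}$, and the order-$\epsilon^2$ condition gives $\mu_2=p(2p-1)\norm{\psi_*}_{L^{p+1}(\R)}^{-(p+1)}\int_{\R}\psi_*^p(P_{\bot}^1H_{\lambda_*}P_{\bot}^1)^{-1}\psi_{*,p}\,dx$. The route differs from the paper's in its bookkeeping, though the underlying second-order perturbation theory is the same. The paper first shows that the Riesz projection $P_\omega=\frac{1}{2\pi i}\int_\Gamma(L_\omega^+-z)^{-1}dz$, hence $\psi_\omega$, is $C^1$ in $\omega$; it then gets the first-order coefficient from the Feynman--Hellmann identity $-\lambda_\omega=\tbr{L_\omega^+\psi_\omega,\psi_\omega}$, extracts $\partial_\omega\psi_\omega$ by differentiating the eigenvalue equation, and computes the limit of $\frac{d^2}{d\omega^2}\lambda_\omega$ as $\omega\downarrow\lambda_*$. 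Your Rayleigh--Schr\"odinger matching yields the identical solvability conditions without differentiating in $\omega$, which is cleaner algebra; what it buys less cheaply is the justification of the two-term ansatz, which the paper obtains from the differentiability of the eigenpair. Two precisions on the points you flag. First, the appeal to ``analytic dependence via Kato'' should be to analyticity of the simple isolated eigenvalue in the bounded multiplication perturbation $W_\omega=(\omega-\lambda_*)-p\varphi_\omega^{p-1}$, not in $\omega$ itself (for $p=2$ the family is only $C^1$ in $\omega$); the $o((\omega-\lambda_*)^2)$ remainder then follows once $W_\omega$ is expanded to second order in a norm such as $L^\infty$, and here one must be careful that $(P_{\bot}^1H_{\lambda_*}P_{\bot}^1)^{-1}\psi_{*,p}$ can decay more slowly than $\psi_*$, so the pointwise expansion of $(\psi_*+\epsilon\eta)^{p-1}$ is uniform only on a growing region and the tails must be absorbed by the exponential weights --- the same decay argument the paper invokes in Lemma \ref{main est}. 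Second, your choice $\psi_2\in(\psi_*)^{\bot}$ is not compatible with $\norm{\psi_\omega}_{L^2(\R)}=1$ at order $\epsilon^2$, but, as you note, this is harmless: the $\psi_*$-component of $\psi_2$ is killed when the second-order equation is paired with $\psi_*$, so $\mu_2$ is unchanged and the lemma asserts $\psi_\omega$ only to first order.
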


\begin{proof}
There exists $\delta_0>0$ such that $\{z\in\C | |z+\lambda_*|<2\delta_0\} \cap \sigma(-\Delta + V)=\{-\lambda_*\}$.
Let $\Gamma=\{z \in \C| |z|=\delta_0 \}$ be a simple closed curve and projections
\[ P_{\omega}= \frac{1}{2\pi i}\int_{\Gamma} (L^+_{\omega}-z)^{-1} dz.\]
Then, for $\omega>\lambda_*$ with $0<\omega-\lambda_*\ll 1$, 
\[ P_{\omega}u=\tbr{u,\psi_{\omega}}_{L^2(\R)}\psi_{\omega}.\]
Since $p\geq 2$, $L^+_{\omega}$ is $C^1$ with respect to $\omega$.
Therefore, the projection $P_{\omega}$ is also $C^1$.
For $\omega, \omega'>\lambda_*$, $(\tbr{\psi_{\omega'},\psi_{\omega}}_{L^2(\R)})^2 -1=\tbr{P_{\omega'}\psi_{\omega},\psi_{\omega}}_{L^2(\R)}-1=o(1)$ as $|\omega'-\omega| \to 0$.
For $\omega>\lambda_*$, 
\[\begin{split}
\psi_{\omega'}-\psi_{\omega}=\frac{P_\omega(\psi_{\omega}+\psi_{\omega'})-P_{\omega'}(\psi_{\omega}+\psi_{\omega'})}{1+\tbr{\psi_{\omega'},\psi_{\omega}}_{L^2(\R)}}.
\end{split}\]
Thus, $\psi_{\omega}$ is $C^1$ with respect to $\omega$.
Let $\varphi_{\omega,0}=(\omega-\lambda_*)^{-\frac{1}{p-1}}\varphi_{\omega}$.
Since $L^+_{\omega}\psi_{\omega}=-\lambda_{\omega}\psi_{\omega}$, we have
\[ -\lambda_{\omega}=\tbr{L_{\omega}^+\psi_{\omega},\psi_{\omega}}_{H^{-1}(\R),H^1(\R)}.\]
Therefore, 
\begin{equation}\label{l-4-2-1} 
\begin{split}
-\frac{d}{d\omega}\lambda_{\omega}
&= 1-p \int_{\R}(\varphi_{\omega,0})^{p-1}(\psi_{\omega})^2dx -p \int_{\R}(p-1)(\omega-\lambda_*)(\varphi_{\omega,0})^{p-2}(\partial_{\omega}\varphi_{\omega,0})(\psi_{\omega})^2dx\\
&= 1-p + O(\omega-\lambda_*).
\end{split}
\end{equation}
Since
\[ (-\partial_x^2 + \omega + \lambda_{\omega} +V -p|\varphi_{\omega}|^{p-1}) \psi_{\omega}=0,\]
we have
\[ \begin{split}
0
&=(1+\partial_{\omega}\lambda_{\omega} -p|\varphi_{\omega,0}|^{p-1}-p(p-1)(\omega-\lambda_*)|\varphi_{\omega,0}|^{p-2}\partial_{\omega}\varphi_{\omega,0}) \psi_{\omega} + L_{\omega}^+ \partial_{\omega} \psi_{\omega}.\\
\end{split}\]
Therefore,
\[ \begin{split}
 \partial_{\omega} \psi_{\omega} 
&=p(P_{\bot}^1H_{\lambda_*} P_{\bot}^1)^{-1}(\norm{\psi_*}_{L^{p+1}(\R)}^{-(p+1)}\psi_*^p-\psi_*)+ O(\omega-\lambda_*).
\end{split}\]
By (\ref{l-4-2-1}) and lemma  \ref{l-4-1}, we obtain 
\[\begin{split}
\frac{d^2}{d\omega^2}\lambda_{\omega}=&2p(p-1) \int_{\R}(\varphi_{\omega,0})^{p-2}(\partial_{\omega}\varphi_{\omega,0})(\psi_{\omega})^2dx +2p \int_{\R}(\varphi_{\omega,0})^{p-1}\psi_{\omega}\partial_{\omega} \psi_{\omega}dx \\
&+p(p-1) \int_{\R}(\omega-\lambda_*)(\varphi_{\omega,0})^{p-3}(\partial_{\omega}\varphi_{\omega,0})^2(\psi_{\omega})^2dx \\
&+p(p-1) \int_{\R}(\omega-\lambda_*)(\varphi_{\omega,0})^{p-2}(\partial_{\omega}^2\varphi_{\omega,0})(\psi_{\omega})^2dx \\
& +2p(p-1) \int_{\R}(\omega-\lambda_*)(\varphi_{\omega,0})^{p-2}(\partial_{\omega}\varphi_{\omega,0})\psi_{\omega}\partial_{\omega} \psi_{\omega}dx\\
=& 2p(2p-1)\int_{\R} \norm{\psi_*}_{L^{p+1}(\R)}^{-(p+1)}\psi_*^p(P_{\bot}^1H_{\lambda_*}P_{\bot}^1)^{-1}(\norm{\psi_*}_{L^{p+1}(\R)}^{-(p+1)}\psi_*^p-\psi_*) dx + o(1).\\
\end{split}\]
\end{proof}

The following corollary follows Lemma \ref{l-4-2}.
\begin{corollary}
There exists $\omega_{*,1}>\lambda_*$ such that for $\lambda_* < \omega<\omega_{*,1}$, $\lambda_{\omega}>0$.
Moreover, if $\lambda_*<\omega_0<\omega_{*,1}$, then the followings are hold.
\begin{enumerate}
  \setlength{\parskip}{0.1cm} 
  \setlength{\itemsep}{0cm} 
\renewcommand{\labelenumi}{\rm (\roman{enumi})}
\item If $\omega_0<\omega<\omega_{*,1}$, then $\mathbb{L}_{\omega}^+$ has exactly two negative eigenvalue and no kernel.
\item If $\lambda_* < \omega<\omega_0$, then $\mathbb{L}_{\omega}^+$ has exactly one negative eigenvalue and no kernel.
\end{enumerate}
\end{corollary}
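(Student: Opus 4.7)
The plan is to diagonalize $\mathbb{L}_{\omega}^{+}$ in the transverse variable $y$ via Fourier series on $\T_L$, reducing everything to the one-dimensional family $L_{\omega}^{+}+n^2/L^2$ on $L^2(\R)$, and then to read the count off from Proposition \ref{R-W} and Lemma \ref{l-4-2}.

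Lemma \ref{l-4-2} gives $\lambda_{\omega}=(p-1)(\omega-\lambda_*)+O((\omega-\lambda_*)^{2})$, so since $p>1$ one may choose $\omega_{*,1}>\lambda_*$ close enough to $\lambda_*$ that $\lambda_{\omega}>0$ on $(\lambda_*,\omega_{*,1})$. The same expansion gives $d\lambda_{\omega}/d\omega=(p-1)+O(\omega-\lambda_*)>0$, so after possibly shrinking $\omega_{*,1}$ the map $\omega\mapsto\lambda_{\omega}$ is strictly increasing on $(\lambda_*,\omega_{*,1})$.

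Since $\tilde{\varphi}_{\omega}(x,y)=\varphi_{\omega}(x)$ is independent of $y$, writing $u(x,y)=\sum_{n\in\Z}u_n(x)e^{iny/L}$ gives
\[
\mathbb{L}_{\omega}^{+}u(x,y)=\sum_{n\in\Z}\bigl(L_{\omega}^{+}+n^2/L^2\bigr)u_n(x)\,e^{iny/L},
\]
so $\sigma(\mathbb{L}_{\omega}^{+})=\bigcup_{n\in\Z}\sigma(L_{\omega}^{+}+n^2/L^2)$, with multiplicities counted per Fourier mode. By Proposition \ref{R-W}, $L_{\omega}^{+}$ has a single simple negative eigenvalue $-\lambda_{\omega}$, no zero eigenvalue, and essential spectrum $[\omega,\infty)$; consequently $L_{\omega}^{+}+n^2/L^2$ contributes a simple negative eigenvalue precisely when $\lambda_{\omega}>n^2/L^2=n^2\lambda_{\omega_0}$, and has zero in its spectrum precisely when $\lambda_{\omega}=n^2\lambda_{\omega_0}$.

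From here the proof reduces to bookkeeping. The mode $n=0$ always yields one negative eigenvalue; $n=\pm 1$ each yield one negative eigenvalue iff $\lambda_{\omega}>\lambda_{\omega_0}$, equivalently $\omega>\omega_0$ by monotonicity; and $|n|\geq 2$ would require $\lambda_{\omega}>4\lambda_{\omega_0}$, which we rule out by shrinking $\omega_{*,1}$ (using $\lambda_{\omega}\sim(p-1)(\omega-\lambda_*)$). This yields exactly two negative eigenvalues when $\omega_0<\omega<\omega_{*,1}$ and exactly one when $\lambda_*<\omega<\omega_0$. Triviality of the kernel is parallel: $n=0$ is excluded by Proposition \ref{R-W}, $|n|=1$ would force $\omega=\omega_0$ which is excluded in both cases, and $|n|\geq 2$ is ruled out by the same smallness choice. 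The main subtlety is the uniformity of the choice of $\omega_{*,1}$, i.e.\ ensuring that the higher Fourier modes and accidental resonances $\lambda_{\omega}=n^2\lambda_{\omega_0}$ do not occur; this is the only quantitative step, and is handled by the asymptotics of $\lambda_{\omega}$ supplied by Lemma \ref{l-4-2}.
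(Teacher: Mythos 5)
Your reduction of $\mathbb{L}_{\omega}^{+}$ to the one-dimensional operators $L_{\omega}^{+}+n^{2}/L^{2}=L_{\omega}^{+}+n^{2}\lambda_{\omega_{0}}$ via Fourier series in $y$, together with the positivity and monotonicity of $\lambda_{\omega}$ extracted from Lemma \ref{l-4-2}, is exactly the intended route; the paper gives no proof beyond asserting that the corollary follows from Lemma \ref{l-4-2}, and the same mode decomposition underlies the proof of Proposition \ref{eigenvalue}. However, two points in your bookkeeping do not go through as written. First, the exclusion of the modes $|n|\geq 2$ is not "handled by the asymptotics of $\lambda_{\omega}$": the condition to be ruled out is $\lambda_{\omega}>4\lambda_{\omega_{0}}$, i.e.\ roughly $\omega-\lambda_{*}>4(\omega_{0}-\lambda_{*})$, and no choice of $\omega_{*,1}$ prevents this, since the ratio $(\omega-\lambda_{*})/(\omega_{0}-\lambda_{*})$ is unbounded as $\omega_{0}\downarrow\lambda_{*}$ with $\omega$ held fixed in $(\lambda_{*},\omega_{*,1})$. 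For such pairs the $n=\pm2$ modes contribute further negative eigenvalues, and at the resonance $\lambda_{\omega}=4\lambda_{\omega_{0}}$ a kernel element $\psi_{\omega}e^{\pm 2iy/L}$ appears, so the count "exactly two" and the triviality of the kernel can only hold if $\omega$ is additionally restricted to a range on which $\lambda_{\omega}<4\lambda_{\omega_{0}}$ --- e.g.\ a neighborhood of $\omega_{0}$ depending on $\omega_{0}$, which is all the bifurcation argument of Proposition \ref{bifurcation} actually requires. You correctly flagged this as the one quantitative step, but the resolution you propose does not close it.

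Second, your tally is internally inconsistent: you assert that $n=0$ always yields one negative eigenvalue and that $n=+1$ and $n=-1$ \emph{each} yield one when $\omega>\omega_{0}$, which gives three negative eigenvalues counted with multiplicity on $L^{2}(\RTL)$, not two. To obtain "exactly two" one must either count distinct eigenvalues or, more naturally, work in the sector of functions even in $y$ (the space $L^{2}_{sym}$ of Proposition \ref{bifurcation}), where only the cosine modes $n\geq0$ occur and each contributes once. You should state which convention you are using and make the restriction to the symmetric sector, or to a suitable neighborhood of $\omega_{0}$, explicit.
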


Applying Lyapunov-Schmidt decomposition and Crandall-Rabinowitz Transversality in \cite{K K P}, we show $\tilde{\varphi}_{\omega_0}$ is a bifurcation point.
In this paper, we only write the sketch of the proof of the following proposition(see the proof of Theorem 4 in \cite{K K P} or Proposition 1 in \cite{YY2} for the detail of the proof of the following proposition).
\begin{proposition}\label{bifurcation}
Let $p\geq 2$ and $\lambda_*<\omega_0<\omega_{*,1}$.
There exist $\delta>0$ and $\phi_{\omega_0}\in C^2([-\delta,\delta],H^2)$ such that $\phi_{\omega_0}(a)>0$, 
\[\phi_{\omega_0}(a)(x,y)=\phi_{\omega_0}(a)(-x,y)=\phi_{\omega_0}(a)(x,-y),\quad (x,y) \in \R \times [-\pi L,\pi L],\]
\[-\Delta \phi_{\omega_0}(a) + \omega_{\omega_0}(a) \phi_{\omega_0}(a) +V \phi_{\omega_0}(a)-|\phi_{\omega_0}(a)|^{p-1}\phi_{\omega_0}(a)=0,\]
\[\phi_{\omega_0}(a)=\tilde{\varphi}_{\omega_0}+a\psi_{\omega_0}\cos \frac{y}{L} + r_{\omega_0}(a),\]
\begin{equation}\label{omega-ex}
 \omega_{\omega_0}(a) = \omega_0+\frac{\omega''_{\omega_0}(0)}{2}a^2 + o(a^2),
\end{equation}
where $r_{\omega_0}(a) \bot \psi_{\omega_0}\cos \frac{y}{L}$, $\norm{r_{\omega_0}(a)}_{H^2}=O(a^2)$,
\begin{equation}\label{omega-val}
\begin{split}
\omega_{\omega_0}''(0)&=\frac{-p^2(p-1)^2}{\frac{d\lambda_{\omega}}{d\omega}|_{\omega=\omega_0}\norm{\psi_{\omega_0}\cos \frac{y}{L}}_{L^2}^2}
\tbr{(\tilde{\varphi}_{\omega_0})^{p-2}(\psi_{\omega_0}\cos \frac{y}{L})^2,\mathbb{L}_{\omega_0}^{-1}((\tilde{\varphi}_{\omega_0})^{p-2}(\psi_{\omega_0}\cos \frac{y}{L})^2)}_{L^2}\\
& \quad - \frac{p(p-1)(p-2)}{3\frac{d\lambda_{\omega}}{d\omega}|_{\omega=\omega_0}\norm{\psi_{\omega_0}\cos \frac{y}{L}}_{L^2}^2}\tbr{(\psi_{\omega_0}\cos \frac{y}{L})^2,(\tilde{\varphi}_{\omega_0})^{p-3}(\psi_{\omega_0}\cos \frac{y}{L})^2}_{L^2},
\end{split}
\end{equation}
\begin{equation}\label{eigen-val}
 \lambda_2(a)=\frac{d\lambda_{\omega}}{d\omega}|_{\omega=\omega_0} \omega_{\omega_0}''(0)a^2 + o(a^2),
\end{equation}
and
\begin{equation}\label{norm-val}
\norm{\phi_{\omega_0}(a)}_{L^2}^2=\norm{\tilde{\varphi}_{\omega_0}}_{L^2}^2+ \frac{R_{p,\omega_0}}{2} a^2+o(a^2).
\end{equation}
Here,
\[R_{p,\omega_0}=-2\frac{d\lambda_{\omega}}{d\omega}|_{\omega=\omega_0} \norm{\psi_{\omega_0} \cos \frac{y}{L}}_{L^2}^2 + \omega_{\omega_0}''(0)\frac{d\norm{\tilde{\varphi}_{\omega}}_{L^2}^2}{d\omega}\biggl|_{\omega=\omega_0},\] 
 and $\lambda_2(a)$ is the second eigenvalue of $\mathbb{L}(a,\omega_0)=-\Delta +\omega_{\omega_0}(a)+ V - |\phi_{\omega_0}(a)|^{p-1}$.
\end{proposition}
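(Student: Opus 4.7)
The result is a pitchfork bifurcation from a simple kernel, and the plan is to perform a Lyapunov--Schmidt reduction in a symmetric subspace and invoke the Crandall--Rabinowitz transversality theorem. First I restrict to the real Banach space
\[X_{\mathrm{sym}} := \{u \in H^2(\RTL,\R) : u(x,y)=u(-x,y)=u(x,-y)\}.\]
By Proposition \ref{eigenvalue}~(iii), together with the facts that $i\tilde{\varphi}_{\omega_0}$ is purely imaginary and $\psi_{\omega_0}\sin(y/L)$ is odd in $y$, the kernel of $\mathbb{L}^+_{\omega_0}$ inside $X_{\mathrm{sym}}$ is spanned by the single function $\chi_0 := \psi_{\omega_0}\cos(y/L)$. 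Let $P$ be the $L^2$-orthogonal projection onto $\mbox{\rm Span}\{\chi_0\}$ and $Q = I-P$. I will look for solutions of the stationary equation
\[F(\phi,\omega) := -\Delta\phi + \omega\phi + V(x)\phi - |\phi|^{p-1}\phi = 0\]
in the form $\phi = \tilde{\varphi}_{\omega_0} + a\chi_0 + r$ with $Pr = 0$.

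The auxiliary equation $QF=0$ is solvable for $r = r(a,\omega)$ by the implicit function theorem, since its linearization in $r$ at $(a,r,\omega)=(0,0,\omega_0)$ equals $Q\mathbb{L}^+_{\omega_0}Q$, an isomorphism on $QX_{\mathrm{sym}}$, and $F$ is of class $C^2$ under $p\geq 2$. The reflection $y\mapsto -y$ forces $r(-a,\omega)=r(a,\omega)$, so $r=O(a^2+|\omega-\omega_0|)$. There then remains the scalar bifurcation equation
\[G(a,\omega) := \tbr{\chi_0, F(\tilde{\varphi}_{\omega_0}+a\chi_0+r(a,\omega),\omega)}_{L^2}=0,\]
and the same parity gives $G(-a,\omega)=-G(a,\omega)$, so $G=a\,\tilde{G}(a,\omega)$ with $\tilde{G}$ of class $C^1$ and even in $a$. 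A direct computation using $L^+_\omega\psi_\omega=-\lambda_\omega\psi_\omega$, $\norm{\psi_\omega}_{L^2(\R)}=1$, and $1/L^2=\lambda_{\omega_0}$ yields the transversality coefficient
\[\partial_\omega\tilde{G}(0,\omega_0)=-\frac{d\lambda_\omega}{d\omega}\bigg|_{\omega=\omega_0}\norm{\chi_0}_{L^2}^2,\]
which is nonzero by Lemma \ref{l-4-2}. The implicit function theorem then produces $\omega=\omega_{\omega_0}(a)\in C^2$, even in $a$, so $\omega_{\omega_0}'(0)=0$ and \eqref{omega-ex} holds.

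The principal effort is extracting the explicit formula \eqref{omega-val} for $\omega_{\omega_0}''(0)$. Writing $r=r_2 a^2 + r_1(\omega-\omega_0) + O(\text{higher})$ and Taylor-expanding $|\phi|^{p-1}\phi$ in $\delta\phi=a\chi_0+r$ through cubic order, the $a^2$-coefficient of $QF=0$ gives
\[\mathbb{L}^+_{\omega_0}r_2 = \tfrac{p(p-1)}{2}\,Q\bigl[\tilde{\varphi}_{\omega_0}^{p-2}\chi_0^2\bigr],\]
which I solve by splitting the right-hand side into its zeroth and second-mode $y$-Fourier components (both lying in $QX_{\mathrm{sym}}$) and inverting $\mathbb{L}^+_{\omega_0}$ on the $Q$-complement. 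The cubic coefficient of $\tilde{G}(a,\omega_{\omega_0}(a))=0$ then couples the direct cubic term $\tfrac{p(p-1)(p-2)}{6}\tilde{\varphi}_{\omega_0}^{p-3}\chi_0^3$ with the cross contribution $p(p-1)\tilde{\varphi}_{\omega_0}^{p-2}\chi_0 r_2$; dividing by the transversality coefficient reproduces the two inner products in \eqref{omega-val}, where $\mathbb{L}_{\omega_0}^{-1}$ is understood as the inverse of $\mathbb{L}^+_{\omega_0}$ restricted to the orthogonal complement of its kernel in $X_{\mathrm{sym}}$. The main obstacle will be this bookkeeping---carefully tracking the two distinct cubic contributions, performing the $y$-Fourier decomposition so that $\mathbb{L}_{\omega_0}^{-1}$ acts on admissible functions, and handling the boundary case $p=2$ where the direct cubic piece vanishes but the $r_2$ contribution survives.

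Finally, formulas \eqref{eigen-val} and \eqref{norm-val} are perturbation corollaries. For $\lambda_2(a)$, I differentiate the eigenvalue equation $\mathbb{L}(a,\omega_0)\xi(a)=\lambda_2(a)\xi(a)$ with eigenfunction starting at $\chi_0$ twice in $a$ and pair with $\chi_0$; the piece of $\partial_a^2\mathbb{L}(0,\omega_0)$ coming from $\omega_{\omega_0}(a)-\omega_0\sim\omega_{\omega_0}''(0)a^2/2$ combines, via $L^+_\omega\psi_\omega=-\lambda_\omega\psi_\omega$ and the identity $\tbr{\psi_\omega,\partial_\omega L^+_\omega\cdot\psi_\omega}_{L^2(\R)}=-d\lambda_\omega/d\omega$, to produce the stated coefficient. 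For \eqref{norm-val} I use that the $a=0$ branch of the bifurcation reduces to the one-dimensional standing-wave branch, so $r(0,\omega)=\tilde{\varphi}_\omega-\tilde{\varphi}_{\omega_0}$ and $\partial_\omega r(0,\omega_0)=\partial_\omega\tilde{\varphi}_\omega|_{\omega_0}$; differentiating $\norm{\phi_{\omega_0}(a)}_{L^2}^2$ twice at $a=0$ and applying $\partial_\omega\norm{\tilde{\varphi}_\omega}_{L^2}^2=2\tbr{\tilde{\varphi}_\omega,\partial_\omega\tilde{\varphi}_\omega}_{L^2}$ assembles the two pieces of $R_{p,\omega_0}$ after simplification.
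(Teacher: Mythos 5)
Your proposal follows essentially the same route as the paper: Lyapunov--Schmidt reduction in the doubly symmetric subspace where the kernel reduces to $\mbox{\rm Span}\{\psi_{\omega_0}\cos\frac{y}{L}\}$, Crandall--Rabinowitz transversality with the coefficient $\frac{d\lambda_\omega}{d\omega}\big|_{\omega_0}\norm{\psi_{\omega_0}\cos\frac{y}{L}}_{L^2}^2$, and then Taylor expansion of the reduced equation to extract $\omega_{\omega_0}''(0)$, $\lambda_2(a)$ and the $L^2$-norm. Two small points need repair. First, the parity $a\mapsto -a$ is not induced by the reflection $y\mapsto -y$ (which fixes $\cos\frac{y}{L}$ and hence fixes $a$); the relevant symmetry is the half-period translation $y\mapsto y+\pi L$, which preserves the symmetric subspace and sends $\cos\frac{y}{L}$ to $-\cos\frac{y}{L}$. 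This still yields $G(-a,\omega)=-G(a,\omega)$ and $r(-a,\omega)(x,y)=r(a,\omega)(x,y+\pi L)$, so the factorization $G=a\tilde G$ survives, but the justification as written is wrong. Second, the claim that $F$ is $C^2$ for all $p\geq 2$ fails at the endpoint $p=2$: the second derivative of $u\mapsto |u|u$ is not continuous where $u$ changes sign, so $F_{||}$ is only $C^1$ there and the factored function $\tilde G$ is not automatically $C^1$. The paper handles this case separately, using the strict positivity of $\tilde\varphi_{\omega_0}$ (so that perturbed solutions stay positive on the relevant region) together with dominated convergence to show the divided-difference function $g(\omega,a)=(F_{||}(\omega,a)-F_{||}(\omega,0))/a$ is $C^1$; your sketch would need the analogous argument before the implicit function theorem can be applied to $\tilde G$ at $p=2$. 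With these two fixes the outline matches the paper's proof.
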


\begin{proof}[The sketch of the proof]
Let $F$ be the function from $H^2_{sym}(\RTL,\R) \to L^2_{sym}(\RTL,\R)$ satisfying
\[F(\varphi,\omega)=-\Delta \varphi +\omega \varphi +V\varphi -|\varphi|^{p-1}\varphi,\]
where $L^2_{sym}(\RTL,\R)=\{u \in L^2(\RTL,\R)|u(x,y)=u(-x,y)=u(x,-y), (x,y) \in \R\times [-\pi L,\pi L]\}$, $H^2_{sym}(\RTL,\R)=H^2(\RTL)\cap L^2_{sym}(\RTL,\R)$ and $L^2(\RTL,\R)$ is the set of real valued $L^2$-function on $\RTL$.
Then, $\mbox{\rm Ker}(\partial_{\varphi}F(\tilde{\varphi}_{\omega_0},\omega_0))$ is spanned by $\psi_{\omega_0}\cos \frac{y}{L}$.
Applying the Lyapunov-Schmidt decomposition, we obtain that there exists a function $h(\omega,a) \in H^2_{sym}(\RTL,\R)$ such that 
\[P_{\bot} F(\tilde{\varphi}_{\omega_0}+a \psi_{\omega_0}\cos \frac{y}{L} + h(\omega,a),\omega)=0,\]
where $P_{\bot}$ is the orthogonal projection onto $\{u \in L^2(\RTL,\R)|\tbr{u,\psi_{\omega_0}\cos \frac{y}{L}}_{L^2}=0\}$.
Then, the problem $F(\tilde{\varphi}_{\omega_0}+a \psi_{\omega_0}\cos \frac{y}{L} + h(\omega,a),\omega)=0$ is equivalent to the problem  
\[ F_{||}(\omega,a)=\tbr{F(\tilde{\varphi}_{\omega_0}+a \psi_{\omega_0}\cos \frac{y}{L} + h(\omega,a),\omega), \psi_{\omega_0} \cos \frac{y}{L}}_{L^2}=0.\]
We apply the Crandall-Rabinowitz Transversality and we consider the problem $g(\omega,a)=0$, where 
\[ g(\omega,a)=
\begin{cases}
\frac{F_{||}(\omega,a)-F_{||}(\omega,0)}{a}, \quad a \neq 0,\\
\frac{\partial F_{||}}{\partial a}(\omega,0), \quad a =0.
\end{cases}
\]
Here for $a \neq 0$, $F_{||}(\omega,a)=0$ if and only if $g(\omega,a)=0$.
If $p>2$, then $F_{||}$ is a $C^2$ function and $g$ is a $C^1$ function.
In the case $p=2$, by the positivity of $\tilde{\varphi}_{\omega_0}$ and the Lebesgue dominant converge theorem, we can prove $g$ is $C^1$.
Then, 
\[\frac{\partial g}{\partial \omega}(\omega_0,0)=\frac{\partial \lambda_{\omega}}{\partial \omega}\biggl|_{\omega=\omega_0}\norm{\psi_{\omega_0}\cos \frac{y}{L}}_{L^2}^2, \quad \frac{\partial g}{\partial a}(\omega_0,0)=0.\]
Therefore, by the implicit function theorem there exists $\omega_{\omega_0}(a)$ such that $g(\omega_{\omega_0}(a),a)=0$.
Hence, $\phi_{\omega_0}(a):=\tilde{\varphi}_{\omega_0} + a \psi_{\omega_0} \cos \frac{y}{L} + h(\omega_{\omega_0}(a),a)$ is a solution of $F(\phi_{\omega_0}(a),\omega_{\omega_0}(a))=0$ and
\[\omega_{\omega_0}'(0)=-\frac{\frac{\partial g}{\partial a}}{\frac{\partial g}{\partial \omega}}(\omega_0,0) =0.\]
Using certain upper and lower exponential decay rates and positivity of $\phi_{\omega_0}(a)$, we can obtain
\[\omega_{\omega_0}''(0) = \lim_{a \to 0}\frac{\omega_{\omega_0}'(0)}{a} = \frac{-1}{\frac{\partial \lambda_{\omega}}{\partial \omega}|_{\omega=\omega_0}} \lim_{a \to 0} \frac{1}{a}\frac{\partial g}{\partial a}(\omega_{\omega_0}(a),a),\]
and (\ref{omega-val}).

Since $\mathbb{L}(a,\omega_0)$ is $C^1$, there exists an eigenfunction $\chi_*(a)$ of $\mathbb{L}(a,\omega_0)$ corresponding to $\lambda_2(a)$ such that $\norm{\chi_*(a)}_{L^2}=1$, $\chi_*(0)=\norm{\psi_{\omega_0} \cos \frac{y}{L}}_{L^2}^{-1}\psi_{\omega_0}\cos \frac{y}{L}$ and $\chi_*(a)$ is $C^1$ with respect to $a$.
In the case $p>2$, since $F_{||}$ is $C^2$, $\phi_{\omega_0}(a)$ is $C^2$.
In the case $p=2$, since $\mathbb{L}(a,\omega_0)$ is $C^1$ and
\[\frac{d\phi_{\omega_0}}{da}(a)=\psi_{\omega_0}\cos \frac{y}{L} - (P_{\bot}\mathbb{L}(a,\omega_0)P_{\bot})^{-1}P_{\bot}(\mathbb{L}(a,\omega_0)\psi_{\omega_0}\cos \frac{y}{L} + \omega_{\omega_0}(a)(\tilde{\varphi}_{\omega_0} + h(\omega_{\omega_0}(a),a))),\]
$\phi_{\omega_0}(a)$ is $C^2$.
Since 
\[\lambda_2(a)=\tbr{\mathbb{L}(a,\omega_0)\chi_*(a),\chi_*(a)}_{L^2},\]
we obtain
\[\begin{split}
\frac{d \lambda_2}{da} &=\omega_{\omega_0}'' -2p(p-1)\tbr{(\phi_{\omega_0})^{p-2}\frac{d\phi_{\omega_0}}{da}\frac{d\chi_*}{da},\chi_*}_{L^2}\\
&\quad -p(p-1) \Tbr{\l( (p-2)(\phi_{\omega_0})^{p-3}\l( \frac{d\phi_{\omega_0}}{da}\r)^2 + (\phi_{\omega_0})^{p-2}\frac{d^2\phi_{\omega_0}}{da^2}\r)\chi_*,\chi_*}_{L^2},
\end{split}\]
and (\ref{eigen-val}).
Finally, calculating $\frac{d^2}{da^2}\norm{\phi_{\omega_0}(a)}_{L^2}^2|_{a=0}$, we get (\ref{norm-val}).

\end{proof}

\begin{lemma}\label{main est}
Let $p\geq 2$.
Then, there exists $\omega_p>\lambda_*$ such that for $\omega_0 \in (\lambda_*,\omega_p)$, $\omega_{\omega_0}''(0)>0$ and
\[R_{p,\omega_0}
\begin{cases}
>0, \quad 2\leq p < \frac{9+\sqrt{57}}{4},\\
<0, \quad \frac{9+\sqrt{57}}{4}\leq p.
\end{cases}
\]
\end{lemma}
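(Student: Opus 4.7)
The plan is to extract leading-order asymptotics, as $\varepsilon := \omega_0 - \lambda_* \downarrow 0$, of both $\omega_{\omega_0}''(0)$ given by \eqref{omega-val} and of $R_{p,\omega_0}$. Using Lemmas \ref{l-4-1} and \ref{l-4-2}, set $c_0 = \norm{\psi_*}_{L^{p+1}(\R)}^{-(p+1)/(p-1)}$ and $\alpha_0 = \int_\R \psi_*^{p+1}\, dx$. Then $\varphi_{\omega_0} = c_0 \varepsilon^{1/(p-1)}\psi_*(1+O(\varepsilon))$, $\psi_{\omega_0} = \psi_* + O(\varepsilon)$, $\lambda_{\omega_0} = (p-1)\varepsilon(1+O(\varepsilon))$, $\frac{d\lambda_\omega}{d\omega}|_{\omega=\omega_0} = (p-1)+O(\varepsilon)$, and $L = (\lambda_{\omega_0})^{-1/2} \sim ((p-1)\varepsilon)^{-1/2}$ diverges. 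The key algebraic identity is $c_0^{p-1}\alpha_0 = 1$, used repeatedly below.

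For $\omega_{\omega_0}''(0)$, decompose $f := (\tilde{\varphi}_{\omega_0})^{p-2}(\psi_{\omega_0}\cos\frac{y}{L})^2$ into Fourier modes in $y$ via $\cos^2\frac{y}{L} = \tfrac{1}{2} + \tfrac{1}{2}\cos\frac{2y}{L}$. Since $\int_{-\pi L}^{\pi L}\cos^3\frac{y}{L}\,dy = 0$, $f$ is orthogonal to the kernel element $\psi_{\omega_0}\cos\frac{y}{L}$ of $\mathbb{L}_{\omega_0}^+$, so $\mathbb{L}_{\omega_0}^{-1}f$ is well-defined. On each Fourier mode the inverse reduces to an $x$-operator: $L^+_{\omega_0}$ on the $y$-independent mode and $L^+_{\omega_0} + 4\lambda_{\omega_0}$ on the $\cos\frac{2y}{L}$ mode. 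Because $L^+_{\omega_0}\psi_{\omega_0} = -\lambda_{\omega_0}\psi_{\omega_0}$, these inverses act on $\psi_{\omega_0}$ as multiplication by $-1/\lambda_{\omega_0}$ and $1/(3\lambda_{\omega_0})$, respectively, while on $(\psi_{\omega_0})^\perp$ they are bounded and give a contribution of strictly smaller order. Using $\int_{-\pi L}^{\pi L}\cos^4\frac{y}{L}\,dy = 3\pi L/4$ in the second term of \eqref{omega-val}, the arithmetic collapses to
\[
\omega_{\omega_0}''(0) = \frac{p(p+3)}{6c_0^{2}}\,\varepsilon^{(p-3)/(p-1)}(1+o(1)),
\]
which is strictly positive for all $p\geq 2$.

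For $R_{p,\omega_0}$, one has $\norm{\psi_{\omega_0}\cos\frac{y}{L}}_{L^2}^2 = \pi L$, so the first term is $-2\pi\sqrt{p-1}\,\varepsilon^{-1/2}(1+o(1))$. By Lemma \ref{l-4-1}, $\norm{\varphi_\omega}_{L^2(\R)}^2 = c_0^{2}\varepsilon^{2/(p-1)}(1+O(\varepsilon))$, so $\frac{d}{d\omega}\norm{\tilde{\varphi}_\omega}_{L^2}^2\bigl|_{\omega_0} = \frac{4\pi L c_0^{2}}{p-1}\varepsilon^{(3-p)/(p-1)}(1+o(1))$; multiplied by the expansion of $\omega_{\omega_0}''(0)$ above, this contributes $\frac{2\pi p(p+3)}{3(p-1)^{3/2}}\varepsilon^{-1/2}(1+o(1))$. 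Summing,
\[
R_{p,\omega_0} = \frac{2\pi\,(-2p^{2}+9p-3)}{3(p-1)^{3/2}\sqrt{\varepsilon}}\,(1+o(1)),
\]
and the positive root of $-2p^{2}+9p-3$ is exactly $p_* = (9+\sqrt{57})/4$, yielding the claimed signs of $R_{p,\omega_0}$ for all $\omega_0 - \lambda_*$ sufficiently small (the boundary case $p = p_*$ requires an examination of the next-order correction). The main obstacle is the second paragraph: both contributions in \eqref{omega-val} diverge like $\varepsilon^{(p-3)/(p-1)}$ through the small eigenvalues $-\lambda_{\omega_0}$ and $3\lambda_{\omega_0}$ of the reduced operators, and the delicate cancellation — producing the clean coefficient $p(p+3)/6$ and, after combining with the leading $-2(p-1)\pi L$ piece, the elementary quadratic $-2p^{2}+9p-3$ — is sensitive to the signs and to the $y$-integrals $\int \cos^{2k}\frac{y}{L}\,dy$.
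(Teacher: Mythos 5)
Your leading-order computation is correct and is essentially the paper's own argument: the same Fourier splitting $\cos^2\frac{y}{L}=\tfrac12+\tfrac12\cos\frac{2y}{L}$ reducing $\mathbb{L}_{\omega_0}^{-1}$ to $(L_{\omega_0}^+)^{-1}$ and $(L_{\omega_0}^++4\lambda_{\omega_0})^{-1}$, the same projections onto $\psi_{\omega_0}$ producing the factors $-1/\lambda_{\omega_0}$ and $1/(3\lambda_{\omega_0})$, the same coefficient $p(p+3)/6$ for $\omega_{\omega_0}''(0)$, and the same quadratic $-2p^2+9p-3$ (the paper writes $-4p^2+18p-6$, which is twice yours) with root $p_*=(9+\sqrt{57})/4$. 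I checked the arithmetic and it matches \eqref{4-e-1}.

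The genuine gap is the borderline case $p=p_*$, which the statement places in the branch $R_{p,\omega_0}<0$ and which you only flag. At $p=p_*$ the coefficient of $(\omega_0-\lambda_*)^{-1/2}$ vanishes identically, so the sign is decided entirely by the next order, and extracting it is not an afterthought: one must carry the $O(\omega_0-\lambda_*)$ corrections of Lemmas \ref{l-4-1} and \ref{l-4-2} (the terms involving $(P_{\bot}^1H_{\lambda_*}P_{\bot}^1)^{-1}\psi_{*,p}$) consistently through $I_1$, $I_2$, $\frac{d\lambda_{\omega}}{d\omega}$, $\norm{\psi_{\omega_0}\cos\frac{y}{L}}_{L^2}^2$ and $\frac{d}{d\omega}\norm{\tilde{\varphi}_{\omega}}_{L^2}^2$. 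The paper's computation yields
\[
R_{p_*,\omega_0}=\frac{p_*(-26p_*^3+57p_*^2-82p_*+30)\pi L(\omega_0-\lambda_*)}{3(p_*-1)^2}\int_{\R}\psi_{*,p_*}(P_{\bot}^1H_{\lambda_*}P_{\bot}^1)^{-1}\psi_{*,p_*}\,dx+o((\omega_0-\lambda_*)L),
\]
where the integral $\int_{\R}\psi_*^{p_*}(P_{\bot}^1H_{\lambda_*}P_{\bot}^1)^{-1}\psi_{*,p_*}\,dx$ appearing in the expansion has been rewritten using $\norm{\psi_*}_{L^{p+1}(\R)}^{-(p+1)}\psi_*^{p}=\psi_{*,p}+\psi_*$ and the fact that $(P_{\bot}^1H_{\lambda_*}P_{\bot}^1)^{-1}$ maps into $(\psi_*)^{\bot}$. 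The quadratic form is strictly positive because $H_{\lambda_*}$ is positive definite on $(\psi_*)^{\bot}$, while the cubic $-26p^3+57p^2-82p+30$ is negative at $p_*>4$; hence $R_{p_*,\omega_0}<0$. Without this step your argument proves the lemma only for $p\neq p_*$.
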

\begin{remark}
The first term of $R_{p,\omega_0}$ with respect to $\omega_0-\lambda_*$ yields the critical exponent $p_*$.
In Lemma \ref{main est}, we show the following expansion:
\[R_{p,\omega_0}=\frac{(-4p^2+18p-6)\pi}{3(p-1)^{3/2}(\omega-\lambda_*)^{1/2}}+O((\omega-\lambda_*)^{1/2}).\]
\end{remark}
\begin{proof}
First, we prove the positivity of $\omega''_{\omega_0}(0)$.
Let
\[\begin{split}
I_1 =&\Tbr{(\tilde{\varphi}_{\omega_0})^{p-2}\bigl(\psi_{\omega_0}\cos \frac{y}{L}\bigl)^2,\mathbb{L}_{\omega_0}^{-1}\l((\tilde{\varphi}_{\omega_0})^{p-2}\bigl(\psi_{\omega_0}\cos \frac{y}{L}\bigl)^2\r)}_{L^2}\\
 I_2=&\Tbr{\bigl(\psi_{\omega_0}\cos \frac{y}{L}\bigl)^2,(\tilde{\varphi}_{\omega_0})^{p-3}\bigl(\psi_{\omega_0}\cos \frac{y}{L}\bigl)^2}_{L^2}.
\end{split}\]
Since $(\varphi_{\omega,0}(x))^{p-3} $ is differentiable with respect to $x\in \R$ and 
\[ \Bigl| \frac{1}{\omega-\lambda_*} \l((\varphi_{\omega,0}(x))^{p-3}-\norm{\psi_*}_{L^{p+1}(\R)}^{-\frac{(p-3)(p+1)}{p-1}}(\psi_*(x))^{p-3}\r)\Bigr| \leq C (\varphi_{\theta(\omega),0}(x))^{p-4}|\partial_{\omega}\varphi_{\theta(\omega),0} (x)|,
\]
by the boundedness of $\norm{\partial_{\omega} \varphi_{\omega,0}}_{H^2(\R)}$ with respect to $\omega$ and certain upper and lower exponential decay rates for $\varphi_{\omega}$ and $\psi_{\omega}$ we have
\[
\begin{split}
I_2=&\frac{3}{8}(\omega_0-\lambda_*)^{\frac{p-3}{p-1}}\int_{\RTL}\tilde{\varphi}_{\omega_0,0}^{p-3}\psi_{\omega_0}^4 dx dy\\
=& \frac{3\pi L}{4} \norm{\psi_*}_{L^{p+1}(\R)}^{\frac{2(p+1)}{p-1}}(\omega_0-\lambda_*)^{\frac{p-3}{p-1}}\\
&+\frac{3(5p-3)\pi L}{4} \norm{\psi_*}_{L^{p+1}(\R)}^{-\frac{(p-3)(p+1)}{p-1}}(\omega_0-\lambda_*)^{\frac{p-3}{p-1}+1} \int_{\R} \psi_*^p (P_{\bot}^1H_{\lambda_*}P_{\bot}^1)^{-1}\psi_{*,p}dx\\
&+o((\omega_0-\lambda_*)^{\frac{p-3}{p-1}+1}L),            
\end{split}\]
where $\varphi_{\omega,0}=(\omega-\lambda_*)^{-1/(p-1)}\varphi_{\omega}$ and $\lambda_*<\theta(\omega) < \omega$.
On the other hand, 
\[\begin{split}
I_1&=\frac{1}{4}\tbr{(\tilde{\varphi}_{\omega_0})^{p-2}\psi_{\omega_0}^2,(L_{\omega_0}^+)^{-1}(\tilde{\varphi}_{\omega_0})^{p-2}\psi_{\omega_0}^2}_{L^2}\\
&\quad +\frac{1}{8}\tbr{(\tilde{\varphi}_{\omega_0})^{p-2}\psi_{\omega_0}^2, (L_{\omega_0}^+ + \frac{4}{L^2})^{-1}((\tilde{\varphi}_{\omega_0})^{p-2}\psi_{\omega_0}^2)}_{L^2}\\
&=I'_1+I''_1.
\end{split}\]
By $\norm{(L_{\omega_0}^+)^{-1}|_{(\psi_{\omega_0})^{\bot}}}\leq C$ and the similar calculation for $I_2$, we obtain 
\[\begin{split}
I'_1=&\frac{(\omega_0-\lambda_*)^{\frac{2(p-2)}{p-1}}}{4}\biggl \{ \Tbr{(\varphi_{\omega_0,0})^{p-2}(\psi_{\omega_0})^2,(L_{\omega_0}^+)^{-1}\l(\int_{\R}(\varphi_{\omega_0,0})^{p-2}(\psi_{\omega_0})^3dx\r)\psi_{\omega_0}}_{L^2}\\
&+ 
\Tbr{(\varphi_{\omega_0,0})^{p-2}(\psi_{\omega_0})^2,(L_{\omega_0}^+)^{-1}\l((\varphi_{\omega_0,0})^{p-2}(\psi_{\omega_0})^2-\int_{\R}(\varphi_{\omega_0,0})^{p-2}(\psi_{\omega_0})^3dx\psi_{\omega_0}\r)}_{L^2}\biggr\} \\
=& -\frac{\pi L(\omega_0-\lambda_*)^{\frac{p-3}{p-1}}\norm{\psi_*}_{L^{p+1}(\R)}^{\frac{2(p+1)}{p-1}}}{2(p-1)}\\
&+\frac{(-5p^2+9p-3)\pi L(\omega_0-\lambda_*)^{\frac{p-3}{p-1}+1}\norm{\psi_*}_{L^{p+1}(\R)}^{-\frac{(p-3)(p+1)}{p-1}}}{2(p-1)^2}\int_{\R}\psi_*^p(P_{\bot}^1H_{\lambda_*}P_{\bot}^1)^{-1} \psi_{*,p}dx\\
&+o((\omega_0-\lambda_*)^{\frac{p-3}{p-1}+1}L),\\
\end{split}\]
where $(\psi_{\omega})^{\bot}=\{u \in L^2(\R)| \tbr{u,\psi_{\omega_0}}_{L^2(\R)}=0\}$.
By the same calculation of $I_1'$ and the boundedness of $\norm{(L_{\omega_0}^++\frac{4}{L^2})^{-1}|_{(\psi_{\omega_0})^{\bot}}}$, 
\[\begin{split}
I''_1 =&\frac{(\omega_0-\lambda_*)^{\frac{2(p-2)}{p-1}}}{8}\Tbr{(\varphi_{\omega_0,0})^{p-2}(\psi_{\omega_0})^2,(L_{\omega_0}^++4/L^2)^{-1}\l(\int_{\R}(\varphi_{\omega_0,0})^{p-2}(\psi_{\omega_0})^3dx\r)\psi_{\omega_0}}_{L^2}\\
&+ \frac{(\omega_0-\lambda_*)^{\frac{2(p-2)}{p-1}}}{8}\biggl \langle (\varphi_{\omega_0,0})^{p-2}(\psi_{\omega_0})^2,\\
& \quad (L_{\omega_0}^++4/L^2)^{-1}\l((\varphi_{\omega_0,0})^{p-2}(\psi_{\omega_0})^2-\int_{\R}(\varphi_{\omega_0,0})^{p-2}(\psi_{\omega_0})^3dx\psi_{\omega_0}\r)\biggr\rangle_{L^2}\\
=&\frac{\pi L(\omega_0-\lambda_*)^{\frac{p-3}{p-1}}\norm{\psi_*}_{L^{p+1}(\R)}^{\frac{2(p+1)}{p-1}}}{12(p-1)}\\
&+\frac{(9p^2-17p+7)\pi L(\omega_0-\lambda_*)^{\frac{p-3}{p-1}+1}\norm{\psi_*}_{L^{p+1}(\R)}^{-\frac{(p-3)(p+1)}{p-1}}}{12(p-1)^2}\int_{\R}\psi_*^p(P_{\bot}^1H_{\lambda_*}P_{\bot}^1)^{-1} \psi_{*,p}dx\\
&+o((\omega_0-\lambda_*)^{\frac{p-3}{p-1}+1}L).\\
\end{split}\]
Since 
\[ \begin{split}
\frac{1}{\frac{d}{d\omega}\lambda_{\omega_0}\norm{\psi_{\omega_0}\cos \frac{y}{L}}_{L^2}^2} = \frac{1}{(p-1)\pi L} - \frac{C_*(\omega_0-\lambda_*)}{(p-1)\pi L}+o((\omega_0-\lambda_*)L^{-1}),
\end{split}\]
we obtain
\[\begin{split}
&\omega_{\omega_0}''(0)\\
=& \frac{p(p+3)(\omega_0-\lambda_*)^{\frac{p-3}{p-1}}\norm{\psi_*}_{L^{p+1}(\R)}^{\frac{2(p+1)}{p-1}}}{6}\\
&- \frac{p(2p^3+3p^2+34p-18)(\omega_0-\lambda_*)^{\frac{p-3}{p-1}+1}\norm{\psi_*}_{L^{p+1}(\R)}^{-\frac{(p-3)(p+1)}{p-1}}}{12(p-1)}\int_{\R}\psi_*^p(P_{\bot}^1H_{\lambda_*}P_{\bot}^1)^{-1} \psi_{*,p}dx\\
&+o((\omega_0-\lambda_*)^{\frac{p-3}{p-1}+1})\\
\end{split}\]
where
\[C_*=\frac{2p(2p-1)\norm{\psi_*}_{L^{p+1}(\R)}^{-(p+1)}\int_{\R}\psi_*^p(P_{\bot}^1H_{\lambda_*}P_{\bot}^1)^{-1}\psi_{*,p}dx}{p-1}.\]
Therefore, if $0<\omega_0-\lambda_*\ll 1$, then $\omega_{\omega_0}''(0)>0$.  

Next, we calculate $R_{p,\omega_0}$.
Since 
\[\begin{split}
-2\frac{d\lambda_{\omega}}{d\omega}|_{\omega=\omega_0}\norm{\psi_{\omega_0}\cos \frac{y}{L}}_{L^2}^2=-2(p-1)\pi L - 2C_*(p-1) \pi L (\omega_0 -\lambda_*) + o((\omega_0-\lambda_*)L)
\end{split}\]
and
\[ \frac{d}{d\omega}\norm{\tilde{\varphi}_{\omega}}_{L^2}^2|_{\omega=\omega_0} =\frac{4\pi L}{p-1} \norm{\psi_*}_{L^{p+1}(\R)}^{-\frac{2(p+1)}{p-1}}(\omega_0-\lambda_*)^{-\frac{p-3}{p-1}}+o((\omega_0-\lambda_*)^{\frac{2}{p-1}}L),\]
we have
\begin{equation}\label{4-e-1}
\begin{split}
R_{p,\omega_0}&= -2(p-1)\pi L - 2C_*(p-1) \pi L (\omega_0 -\lambda_*) + \frac{2p(p+3)\pi L}{3(p-1)} + o((\omega_0-\lambda_*)L)\\
&- \frac{p(2p^3+3p^2+34p-18)\pi L }{3(p-1)^2}(\omega_0-\lambda_*)\norm{\psi_*}_{L^{p+1}(\R)}^{-(p+1)}\int_{\R}\psi_*^p(P_{\bot}^1H_{\lambda_*}P_{\bot}^1)^{-1} \psi_{*,p}dx\\
=&\frac{(-4p^2+18p-6)\pi L}{3(p-1)} + o((\omega_0-\lambda_*)L)\\
&+ \frac{p(-26p^3+57p^2-82p+30)\pi L }{3(p-1)^2}(\omega_0-\lambda_*)\norm{\psi_*}_{L^{p+1}(\R)}^{-(p+1)}\int_{\R}\psi_*^p(P_{\bot}^1H_{\lambda_*}P_{\bot}^1)^{-1} \psi_{*,p}dx.
\end{split}
\end{equation}
Let 
\[ p_* =\frac{9+\sqrt{57}}{4}.\]
Since $p_*$ is the root of $-4p^2+18p-6=0$ with $p>1$, the conclusion for $p \neq p_*$ follows (\ref{4-e-1}).
Finally, we consider the case $p=p_*$.
By $p_*>4$, we have 
\[-26p_*^3+57p_*^2-82p_*+30<0.\]
Therefore, 
\begin{equation*}
\begin{split}
R_{p_*,\omega_0}
=&\frac{p_*(-26p_*^3+57p_*^2-82p_*+30)\pi L (\omega_0-\lambda_*)\norm{\psi_*}_{L^{p_*+1}(\R)}^{-(p_*+1)}}{3(p_*-1)^2}\int_{\R}\psi_*^{p_*}(P_{\bot}^1H_{\lambda_*}P_{\bot}^1)^{-1} \psi_{*,p_*}dx\\
&+ o((\omega_0-\lambda_*)L)\\
=&\frac{p_*(-26p_*^3+57p_*^2-82p_*+30)\pi L(\omega_0-\lambda_*) }{3(p_*-1)^2}\int_{\R} \psi_{*,p_*}(P_{\bot}^1H_{\lambda_*}P_{\bot}^1)^{-1} \psi_{*,p_*}dx\\
&+ o((\omega_0-\lambda_*)L)\\
\end{split}
\end{equation*}
The conclusion for $p = p_*$ follows this.
\end{proof}

Using Lemma \ref{main est} and applying the argument in Section 3 of \cite{YY2}, we obtain Theorem \ref{main result 2}.

For the completeness of the proof of Theorem \ref{main result 2}, we introduce the argument for the stability of standing with the degenerate linearized operator in \cite{MM,YY2}.
Using the following proposition, we show Theorem \ref{main result 2}.
\begin{proposition}\label{c-stability}
Let $\lambda_*<\omega_0<\omega_{*,0}$.
\begin{enumerate}
  \setlength{\parskip}{0.1cm} 
  \setlength{\itemsep}{0cm} 
\renewcommand{\labelenumi}{\rm (\roman{enumi})}
\item If $R_{p,\omega_0}>0$, then $e^{i\omega_0 t}\tilde{\varphi}_{\omega_0}$ is a stable standing wave of {\rm (NLS)} on $\RTL$ with $L=(\lambda_{\omega_0})^{-\frac{1}{2}}$.
\item If $R_{p,\omega_0}<0$, then $e^{i\omega_0 t}\tilde{\varphi}_{\omega_0}$ is an unstable standing wave of {\rm (NLS)} on $\RTL$ with $L=(\lambda_{\omega_0})^{-\frac{1}{2}}$.
\end{enumerate}
\end{proposition}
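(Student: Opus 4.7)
The plan is to reduce Proposition \ref{c-stability} to the abstract degenerate stability/instability scheme of Maeda \cite{MM} and the author \cite{YY2}, whose hypotheses are precisely the ingredients produced by Proposition \ref{bifurcation} and Lemma \ref{main est}. The sign of $R_{p,\omega_0}$ decides whether $\tilde{\varphi}_{\omega_0}$ sits at a local minimum or a saddle of a suitable Lyapunov functional, with the degeneracy of $S_{\omega_0}''(\tilde{\varphi}_{\omega_0})$ absorbed by the bifurcation curve $\{\phi_{\omega_0}(a)\}$. I would begin by modulating the $U(1)$-phase and $y$-translation symmetries to project out the directions $i\tilde{\varphi}_{\omega_0}$ and $\psi_{\omega_0}\sin(y/L)$ from the kernel of $S_{\omega_0}''(\tilde{\varphi}_{\omega_0})$ identified in Proposition \ref{eigenvalue} (iii). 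After modulation, the only surviving kernel direction is $\psi_{\omega_0}\cos(y/L)$; on its orthogonal complement, coercivity of $S_{\omega_0}''(\tilde{\varphi}_{\omega_0})$ follows from Lemma \ref{L-est}, the positivity of $L_{\omega_0}^-$, and the Fourier decomposition in $y$ used in the proof of Proposition \ref{eigenvalue}.

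For part (i), I would fix $\Omega \in C^2(\R)$ with $\Omega'(Q(\tilde{\varphi}_{\omega_0})) = \omega_0$ and $\Omega''(Q(\tilde{\varphi}_{\omega_0}))$ large and positive, and introduce the modified Lyapunov functional $\Lambda(u) = E(u) + \Omega(Q(u))$. Using $\Lambda'(\tilde{\varphi}_{\omega_0}) = 0$, the observation that $\tbr{Q'(\tilde{\varphi}_{\omega_0}),\psi_{\omega_0}\cos(y/L)}_{L^2}=0$, and the expansions $Q(\phi_{\omega_0}(a)) - Q(\tilde{\varphi}_{\omega_0}) = R_{p,\omega_0} a^2/4 + o(a^2)$ and $\omega_{\omega_0}(a) - \omega_0 = \omega_{\omega_0}''(0) a^2/2 + o(a^2)$, a fourth-order Taylor expansion along the bifurcation curve yields $\Lambda(\phi_{\omega_0}(a)) - \Lambda(\tilde{\varphi}_{\omega_0}) \geq c\,a^4$ for some $c>0$ depending on $R_{p,\omega_0}$ and $\omega_{\omega_0}''(0)$. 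Combined with the transverse coercivity of $S_{\omega_0}''$ and conservation of $\Lambda$, this yields orbital stability via the standard variational argument.

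For part (ii), the same expansion with $R_{p,\omega_0} < 0$ gives $\Lambda(\phi_{\omega_0}(a)) < \Lambda(\tilde{\varphi}_{\omega_0})$ for small $a \neq 0$, so $\tilde{\varphi}_{\omega_0}$ is a strict saddle of $\Lambda$ in the bifurcation direction. Following the Grillakis-Shatah-Strauss instability scheme as adapted to the degenerate setting in \cite{MM,YY2}, I would take initial data of the form $\tilde{\varphi}_{\omega_0} + \delta \psi_{\omega_0}\cos(y/L) + r_\delta$, with $r_\delta$ a small higher-order correction chosen so that $Q$ matches the bifurcation curve, and apply a convexity (virial-type) argument to the modulation parameter $a(t)$ extracted from the modulation ansatz. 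The sign information $R_{p,\omega_0}<0$ forces $a(t)$ to exit any fixed small neighborhood of $0$ in finite time, and the solution therefore leaves every sufficiently small neighborhood of the symmetry orbit of $\tilde{\varphi}_{\omega_0}$.

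The main obstacle is the degeneracy of $S_{\omega_0}''(\tilde{\varphi}_{\omega_0})$ in the direction $\psi_{\omega_0}\cos(y/L)$: the standard Grillakis-Shatah-Strauss argument relies on coercivity of the Hessian on a codimension-$2$ subspace, which fails here. One must therefore work at the quartic level along the bifurcation direction, and the whole purpose of the coefficient $R_{p,\omega_0}$ is to supply the leading fourth-order term that fixes the sign. The technical heart of the proof is to decompose a generic perturbation into a piece tangent to the bifurcation curve (controlled at order $a^4$ through $R_{p,\omega_0}$) and a transverse piece (controlled by coercivity of $S_{\omega_0}''$), and then to close the argument via conservation laws; this is precisely the content of Section 3 of \cite{YY2}.
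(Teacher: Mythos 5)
Your proposal is correct in outline and follows the same overall strategy as the paper: expand the action to fourth order along the symmetry-breaking bifurcation curve, identify the sign of the quartic coefficient with the sign of $R_{p,\omega_0}$, control the transverse directions by coercivity after modulating out the phase and $y$-translation, and run a Grillakis--Shatah--Strauss-type argument in the degenerate setting of \cite{MM,YY2}. The one genuinely different ingredient is how you handle the mass constraint in part (i): you use the modified functional $\Lambda=E+\Omega(Q)$ with $\Omega''$ large, so that the quartic coefficient along the curve becomes $\frac{\Omega''R_{p,\omega_0}^2}{32}-\frac{\omega_{\omega_0}''(0)R_{p,\omega_0}}{16}$, positive for $R_{p,\omega_0}>0$ and $\Omega''$ large. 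The paper instead stays on the constraint manifold $N_\varepsilon^0$ and restores $Q(u)=Q(\tilde{\varphi}_{\omega_0})$ geometrically, replacing $\phi_{\omega_0}(\vec{a})$ by $\Phi(\vec{a})=\phi_{\omega_0}(\vec{a})+\rho(\vec{a})\partial_{\omega}\tilde{\varphi}_{\omega_0}$ (Lemma \ref{lem-curve}); the correction $\frac12\rho^2\langle S_{\omega_0}''\partial_\omega\tilde{\varphi}_{\omega_0},\partial_\omega\tilde{\varphi}_{\omega_0}\rangle$ then produces the coefficient $C_{**}R_{p,\omega_0}$ of Lemma \ref{lem-action}. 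Both devices are standard and equivalent here because $\frac{d}{d\omega}Q(\tilde{\varphi}_{\omega})>0$; your route buys a functional defined on all of $H^1$ at the cost of an extra parameter $\Omega''$, while the paper's route keeps the exact conserved action but requires the rescaling step $v_n=\sqrt{Q(\tilde{\varphi}_{\omega_0})/Q(u_n)}\,u_n(t_n)$ at the end. For part (ii) your sketch is the thinnest: ``a convexity (virial-type) argument on $a(t)$'' is asserted rather than constructed. Note that $\Lambda$ itself cannot detect the instability (for $R_{p,\omega_0}<0$ its quartic coefficient along the curve is still positive), so one really needs, as the paper does, an auxiliary bounded functional $A(u)=\langle e^{i\theta(u)}u,-i[a_1\partial_{a_1}\Phi+a_2\partial_{a_2}\Phi]\rangle_{L^2}$ together with the computation $\frac{d}{dt}A=P(u)$ and the lower bound $|P(\Phi(\vec{a}))|\gtrsim|\vec{a}|^2|\rho(\vec{a})|\sim|\vec{a}|^4$, which via $S_{\omega_0}(u(t))<S_{\omega_0}(\tilde{\varphi}_{\omega_0})$ stays bounded away from zero and contradicts the boundedness of $A$; since you defer precisely this to \cite{MM,YY2}, as the paper itself does, I would not call it a gap, but it is the step you would need to spell out.
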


To modulate the translation symmetry for $y \in \T_L$, we define the polar coordinate $\vec{a}=(a_1,a_2)=(a \cos \frac{\tilde{a}}{L},-a\sin \frac{\tilde{a}}{L})$ for $\vec{a} \in \R^2$ and
\[\phi_{\omega_0}(\vec{a})(x,y)=\phi_{\omega_0}(a)(x,y+\tilde{a}),\quad \omega_{\omega_0}(\vec{a})=\omega_{\omega_0}(a).\]
In the following lemma, we construct a curve which captures the degeneracy of the linearized operator $S_{\omega_0}''(\tilde{\varphi}_{\omega_0})$.
\begin{lemma}\label{lem-curve}
There exist a neighborhood $U$ of $(0,0)$ in $\R^2$ and a $C^1$ function $\rho:U\to\R$ such that $\rho(0,0)=0$ and for $\vec{a} \in U$
\[Q(\phi_{\omega_0}(\vec{a})+\rho(\vec{a})\partial_{\omega}\tilde{\varphi}_{\omega_0})=Q(\tilde{\varphi}_{\omega_0}),\]
\begin{equation}\label{rho-1}
\rho(\vec{a})\tbr{\tilde{\varphi}_{\omega_0},\partial_{\omega}\tilde{\varphi}_{\omega_0}}_{L^2}=Q(\tilde{\varphi}_{\omega_0})-Q(\phi_{\omega_0}(\vec{a}))+o(\rho(\vec{a})).
\end{equation}
\end{lemma}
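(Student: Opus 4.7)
The plan is to obtain $\rho$ by a standard application of the implicit function theorem applied to the scalar equation $Q(\phi_{\omega_0}(\vec{a}) + \rho \, \partial_\omega \tilde{\varphi}_{\omega_0}) = Q(\tilde{\varphi}_{\omega_0})$, and then read off the identity \eqref{rho-1} from the Taylor expansion of $Q$ in the $\rho$ variable.

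More precisely, I would define
\[
F: \R^2 \times \R \to \R, \qquad F(\vec{a},\rho) := Q\bigl(\phi_{\omega_0}(\vec{a}) + \rho\, \partial_\omega \tilde{\varphi}_{\omega_0}\bigr) - Q(\tilde{\varphi}_{\omega_0}).
\]
Since $\phi_{\omega_0}(0) = \tilde{\varphi}_{\omega_0}$, we have $F(0,0)=0$, and because $\phi_{\omega_0}(\vec{a})$ is $C^2$ in $\vec{a}$ with values in $H^2$ and $Q$ is smooth on $L^2$, the function $F$ is $C^1$ on a neighborhood of $(0,0)$. The partial derivative in $\rho$ at the base point is
\[
\partial_\rho F(0,0) = \tbr{\tilde{\varphi}_{\omega_0}, \partial_\omega \tilde{\varphi}_{\omega_0}}_{L^2} = \tfrac{1}{2}\tfrac{d}{d\omega}\norm{\tilde{\varphi}_\omega}_{L^2}^2 \bigl|_{\omega=\omega_0},
\]
and by Lemma \ref{l-4-1} together with $\norm{\tilde{\varphi}_\omega}_{L^2}^2 = 2\pi L\, \norm{\psi_*}_{L^{p+1}(\R)}^{-2(p+1)/(p-1)}(\omega-\lambda_*)^{2/(p-1)} + o((\omega-\lambda_*)^{2/(p-1)})$, this derivative is non-zero for $0<\omega_0-\lambda_*\ll 1$. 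The implicit function theorem then produces a neighborhood $U$ of $(0,0)$ and a $C^1$ function $\rho:U\to\R$ with $\rho(0,0)=0$ and $F(\vec{a},\rho(\vec{a}))=0$, which is exactly the first equation in the statement.

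For the asymptotic identity \eqref{rho-1}, I would expand $Q$ in the direction $\partial_\omega \tilde{\varphi}_{\omega_0}$:
\[
Q\bigl(\phi_{\omega_0}(\vec{a}) + \rho\, \partial_\omega \tilde{\varphi}_{\omega_0}\bigr) = Q(\phi_{\omega_0}(\vec{a})) + \rho\, \tbr{\phi_{\omega_0}(\vec{a}), \partial_\omega \tilde{\varphi}_{\omega_0}}_{L^2} + \tfrac{\rho^2}{2} \norm{\partial_\omega \tilde{\varphi}_{\omega_0}}_{L^2}^2.
\]
Substituting $\rho = \rho(\vec{a})$, using $F(\vec{a},\rho(\vec{a}))=0$, and writing $\tbr{\phi_{\omega_0}(\vec{a}), \partial_\omega \tilde{\varphi}_{\omega_0}}_{L^2} = \tbr{\tilde{\varphi}_{\omega_0}, \partial_\omega \tilde{\varphi}_{\omega_0}}_{L^2} + O(|\vec{a}|)$ (from $\phi_{\omega_0}(\vec{a}) = \tilde{\varphi}_{\omega_0} + O(|\vec{a}|)$ in $L^2$), we find
\[
\rho(\vec{a})\, \tbr{\tilde{\varphi}_{\omega_0}, \partial_\omega \tilde{\varphi}_{\omega_0}}_{L^2} = Q(\tilde{\varphi}_{\omega_0}) - Q(\phi_{\omega_0}(\vec{a})) + O(\rho(\vec{a})^2) + O(|\vec{a}|\,\rho(\vec{a})).
\]
Since $\rho(\vec{a})\to 0$ and $|\vec{a}|\to 0$ jointly as $\vec{a}\to 0$, both remainders are $o(\rho(\vec{a}))$, which yields \eqref{rho-1}.

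The only non-routine point is the non-degeneracy of $\partial_\rho F(0,0)$, i.e.\ that $\frac{d}{d\omega}\norm{\tilde{\varphi}_\omega}_{L^2}^2 \ne 0$ at $\omega=\omega_0$; this follows directly from the explicit leading term furnished by Lemma \ref{l-4-1}. Everything else is book-keeping on the Taylor expansion.
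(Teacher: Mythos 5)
Your proposal is correct and follows essentially the same route as the paper: the paper's own proof simply observes that $\partial_{\rho} Q(\phi_{\omega_0}(\vec{a})+\rho \partial_{\omega}\tilde{\varphi}_{\omega_0})|_{\rho=0,\vec{a}=0}=\tbr{\tilde{\varphi}_{\omega_0},\partial_{\omega}\tilde{\varphi}_{\omega_0}}_{L^2} > 0$ and invokes the implicit function theorem. Your additional details (the sign of the derivative via Lemma \ref{l-4-1} and the exact quadratic expansion of $Q$ giving \eqref{rho-1}) are exactly the book-keeping the paper leaves implicit.
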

\begin{proof}
Since
\[\partial_{\rho} Q(\phi_{\omega_0}(\vec{a})+\rho \partial_{\omega}\tilde{\varphi}_{\omega_0})|_{\rho=0,\vec{a}=0}=\tbr{\tilde{\varphi}_{\omega_0},\partial_{\omega}\tilde{\varphi}_{\omega_0}}_{L^2} > 0,\]
the conclusion follows the implicit function theorem.
\end{proof}
Let
\[ \Phi(\vec{a})=\phi_{\omega_0}(\vec{a})+\rho(\vec{a})\partial_{\omega}\tilde{\varphi}_{\omega_0}.\]
for $ \vec{a} \in U$.

In the following lemma, we capture the degeneracy of the action $S_{\omega}$.
\begin{lemma}\label{lem-action}
For $\vec{a} \in U$,
\[S_{\omega_0}(\Phi(\vec{a}))-S_{\omega_0}(\tilde{\varphi}_{\omega_0})=\frac{\frac{d}{d \omega}\lambda_{\omega_0} \norm{\psi_{\omega_0} \cos (y/L)}_{L^2}^2 R_{p,\omega_0}}{16 \tbr{\tilde{\varphi}_{\omega_0},\partial_{\omega}\tilde{\varphi}_{\omega_0}}_{L^2}}|\vec{a}|^4+o(|\vec{a}|^4).
\]
\end{lemma}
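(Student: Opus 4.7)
My plan is to Taylor-expand $S_{\omega_0}(\Phi(\vec a))$ around $\phi_{\omega_0}(\vec a)$ in powers of $\rho(\vec a)\partial_\omega \tilde\varphi_{\omega_0}$. The central observation is that $\phi_{\omega_0}(\vec a)$ is a critical point of $S_{\omega_{\omega_0}(\vec a)}$, \emph{not} of $S_{\omega_0}$, which yields
\[
 S_{\omega_0}'(\phi_{\omega_0}(\vec a)) = (\omega_0 - \omega_{\omega_0}(\vec a))\, \phi_{\omega_0}(\vec a)
\]
in the $L^2$-duality sense. Moreover, since $\tilde\varphi_{\omega_0}$ and $\partial_\omega \tilde\varphi_{\omega_0}$ are $y$-independent while $S_{\omega_0}$ and $Q$ are $y$-translation invariant, the entire quantity depends only on $a=|\vec a|$, with the expansions $\omega_{\omega_0}(a)=\omega_0+\frac{\omega_{\omega_0}''(0)}{2}a^2+o(a^2)$ from \eqref{omega-ex} and $Q(\phi_{\omega_0}(a))=Q(\tilde\varphi_{\omega_0})+\frac{R_{p,\omega_0}}{4}a^2+o(a^2)$ from \eqref{norm-val}.

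First I would compute $S_{\omega_0}(\phi_{\omega_0}(a))-S_{\omega_0}(\tilde\varphi_{\omega_0})$ by integrating along the radial path the identity
\[
 \frac{d}{da}S_{\omega_0}(\phi_{\omega_0}(a))=(\omega_0-\omega_{\omega_0}(a))\,\frac{d}{da}Q(\phi_{\omega_0}(a)),
\]
obtained by pairing the previous display with $\frac{d}{da}\phi_{\omega_0}(a)$. The two factors are $O(a^2)$ and $O(a)$ with leading coefficients $-\omega_{\omega_0}''(0)/2$ and $R_{p,\omega_0}/2$ respectively, and integration from $0$ to $a$ gives $-\frac{\omega_{\omega_0}''(0)R_{p,\omega_0}}{16}a^4+o(a^4)$.

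Next I would expand $S_{\omega_0}(\phi_{\omega_0}(a)+\rho(a)\partial_\omega \tilde\varphi_{\omega_0})$ to second order in $\rho$. The linear term equals $\rho(a)(\omega_0-\omega_{\omega_0}(a))\tbr{\phi_{\omega_0}(a),\partial_\omega \tilde\varphi_{\omega_0}}_{L^2}$ by the central identity; inserting $\rho(a)=-R_{p,\omega_0}a^2/(4\tbr{\tilde\varphi_{\omega_0},\partial_\omega\tilde\varphi_{\omega_0}}_{L^2})+o(a^2)$ from Lemma \ref{lem-curve} produces $+\frac{\omega_{\omega_0}''(0)R_{p,\omega_0}}{8}a^4+o(a^4)$. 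For the quadratic-in-$\rho$ term the key ingredient is the identity $S_{\omega_0}''(\tilde\varphi_{\omega_0})\partial_\omega\tilde\varphi_{\omega_0}=-\tilde\varphi_{\omega_0}$ obtained by differentiating $S_\omega'(\tilde\varphi_\omega)=0$ in $\omega$ at $\omega_0$; combined with continuity of $S_{\omega_0}''$ in its base point this yields $-\frac{R_{p,\omega_0}^2}{32\tbr{\tilde\varphi_{\omega_0},\partial_\omega\tilde\varphi_{\omega_0}}_{L^2}}a^4+o(a^4)$, while $O(\rho^3)=O(a^6)=o(a^4)$ absorbs the remainder.

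Summing the three pieces and factoring out $R_{p,\omega_0}/16$ yields
\[
 S_{\omega_0}(\Phi(\vec a))-S_{\omega_0}(\tilde\varphi_{\omega_0}) = \frac{R_{p,\omega_0}}{16}\left(\omega_{\omega_0}''(0)-\frac{R_{p,\omega_0}}{2\tbr{\tilde\varphi_{\omega_0},\partial_\omega\tilde\varphi_{\omega_0}}_{L^2}}\right)a^4+o(a^4).
\]
The final step is to plug in the definition of $R_{p,\omega_0}$ from Proposition \ref{bifurcation}, using $\frac{d}{d\omega}\norm{\tilde\varphi_\omega}_{L^2}^2\bigr|_{\omega_0}=2\tbr{\tilde\varphi_{\omega_0},\partial_\omega\tilde\varphi_{\omega_0}}_{L^2}$, which collapses the inner bracket to $\frac{d}{d\omega}\lambda_{\omega_0}\norm{\psi_{\omega_0}\cos\frac{y}{L}}_{L^2}^2/\tbr{\tilde\varphi_{\omega_0},\partial_\omega\tilde\varphi_{\omega_0}}_{L^2}$, delivering the claimed formula. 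I expect the main obstacle to be bookkeeping: Step 1 and the linear-in-$\rho$ contribution each produce an $\omega_{\omega_0}''(0)R_{p,\omega_0}a^4$ term with opposite signs but unequal magnitudes (ratio $1{:}2$), so they combine rather than cancel, and the $\rho^2$ contribution then supplies precisely the $R_{p,\omega_0}^2$ piece needed for the algebraic simplification against the defining relation for $R_{p,\omega_0}$.
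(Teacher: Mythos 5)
Your argument is correct and follows essentially the same route as the paper: both proofs rest on the criticality $S_{\omega_{\omega_0}(\vec a)}'(\phi_{\omega_0}(\vec a))=0$, the identity $S_{\omega_0}''(\tilde\varphi_{\omega_0})\partial_{\omega}\tilde\varphi_{\omega_0}=-\tilde\varphi_{\omega_0}$, the expansion of $\rho$ from Lemma \ref{lem-curve} together with \eqref{omega-ex} and \eqref{norm-val}, and the same final algebraic collapse of the bracket against the definition of $R_{p,\omega_0}$. The only difference is bookkeeping: the paper evaluates the action at the shifted frequency $\omega_{\omega_0}(\vec a)$ so that the linear-in-$\rho$ term vanishes and obtains the branch contribution from $\frac{d}{d\omega}S_{\omega}(\phi_{\omega_0}(a^+))=Q(\phi_{\omega_0}(a^+))$, whereas you keep $\omega_0$ fixed, retain the linear term, and integrate radially in $a$; your three coefficients $-\tfrac{1}{16}+\tfrac{1}{8}$ and $-\tfrac{1}{32}$ recombine to the paper's $\tfrac{1}{16}-\tfrac{1}{32}$, giving the identical result.
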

\begin{proof}
For $\vec{a}\in U$, 
\[ \begin{split}
S_{\omega_0}(\Phi(\vec{a}))-S_{\omega_0}(\tilde{\varphi}_{\omega_0})
=&S_{\omega_{\omega_0}(\vec{a})}(\Phi(\vec{a}))-S_{\omega_0}(\tilde{\varphi}_{\omega_0})+(\omega_0-\omega_{\omega_0}(\vec{a}))Q(\tilde{\varphi}_{\omega_0})\\
=&S_{\omega_{\omega_0}(\vec{a})}(\phi_{\omega_0}(\vec{a}))-S_{\omega_0}(\tilde{\varphi}_{\omega_0})+(\omega_0-\omega_{\omega_0}(\vec{a}))Q(\tilde{\varphi}_{\omega_0})\\
&+\frac{1}{2}(\rho(\vec{a}))^2\tbr{S_{\omega_0}''(\tilde{\varphi}_{\omega_0})\partial_{\omega}\tilde{\varphi}_{\omega_0},\partial_{\omega}\tilde{\varphi}_{\omega_0}}_{L^2} + o((\rho(\vec{a}))^2).
\end{split}\]
From $\omega_{\omega_0}''(0)>0$ and (\ref{omega-ex}), $\omega_{\omega_0}(a)$ is increasing on a small interval $(0,\delta)$. 
Therefore, there exists the inverse function $a^+(\omega)$ of $\omega_{\omega_0}(a)$ form $[\omega_0, \omega_{\omega_0}(\delta))$ to $[0,\delta)$.
By the differentiability of $a^+$ for $\omega>\omega_0$, $\phi_{\omega_0}(a^+)$ is differentiable for $\omega > \omega_0$.
Thus, for $\omega,\omega_1$ with $\omega\neq \omega_1$ 
\[ \begin{split}
&\frac{S_{\omega}(\phi_{\omega_0}(a^+(\omega)))-S_{\omega_1}(\phi_{\omega_0}(a^+(\omega_1)))}{\omega - \omega_1}\\
=&\frac{\tbr{S_{\omega_1}''(\phi_{\omega_0}(a^+(\omega_1))(\phi_{\omega_0}(a^+(\omega))-\phi_{\omega_0}(a^+(\omega_1))),(\phi_{\omega_0}(a^+(\omega))-\phi_{\omega_0}(a^+(\omega_1)))}_{L^2}}{2(\omega-\omega_1)}\\
&+ Q(\phi_{\omega_0}(a^+(\omega))) + \frac{o((\phi_{\omega_0}(a^+(\omega))-\phi_{\omega_0}(a^+(\omega_1)))^2)}{\omega-\omega_1}\\
 \to & Q(\phi_{\omega_0}(a^+(\omega_1))) \quad \mbox{as } \omega \to \omega_1.\\
\end{split}\]
Moreover, since $\partial_a \phi_{\omega_0}(a)|_{a=0}= \psi_{\omega_0} \cos \frac{y}{L}$, for $\omega> \omega_0$
\[ \begin{split}
&\frac{S_{\omega}(\phi_{\omega_0}(a^+(\omega)))-S_{\omega_0}(\tilde{\varphi}_{\omega_0})}{\omega - \omega_0}\\
=&\frac{\tbr{S_{\omega_0}''(\tilde{\varphi}_{\omega_0})(\phi_{\omega_0}(a^+)-\tilde{\varphi}_{\omega_0}),(\phi_{\omega_0}(a^+)-\tilde{\varphi}_{\omega_0})}_{L^2}}{\omega_{\omega_0}''(0)(a^+)^2+o((a^+)^2)}+ Q(\tilde{\varphi}_{\omega_0}) + \frac{o((\phi_{\omega_0}(a^+)-\tilde{\varphi}_{\omega_0})^2)}{\omega_{\omega_0}''(0)(a^+)^2+o((a^+)^2)}\\
 \to & Q(\tilde{\varphi}_{\omega_0}) \quad \mbox{as } \omega \downarrow  \omega_0.\\
\end{split}\]
Hence, $S_{\omega}(\phi_{\omega_0}(a^+))$ is $C^1$ and 
\[ \frac{d S_{\omega}(\phi_{\omega_0}(a^+))}{d\omega} = Q(\phi_{\omega_0}(a^+)).\]
By the equation (\ref{norm-val}), $Q(\phi_{\omega_0}(a^+))$ is $C^1$ on $(\omega_0,\omega_{\omega_0}(\delta))$ and
\[\lim_{\omega \downarrow \omega_0}\frac{Q(\phi_{\omega_0}(a^+))-Q(\tilde{\varphi}_{\omega_0})}{\omega-\omega_0}= \frac{R_{p,\omega_0}}{2\omega_{\omega_0}''(0)}.\]
Therefore, $S_{\omega}(\phi_{\omega_0}(a^+))$ is $C^2$ with respect to $\omega$ on $(\omega_0,\omega_{\omega_0}(\delta))$ and
\begin{equation}\label{S-ex}
\begin{split}
S_{\omega}(\phi_{\omega_0}(a^+))-S_{\omega_0}(\tilde{\varphi}_{\omega_0})+(\omega_0-\omega)Q(\tilde{\varphi}_{\omega_0}) =& \frac{R_{p,\omega_0}}{4\omega_{\omega_0}''(0)}(\omega-\omega_0)^2 + o((\omega-\omega_0)^2)\\
=&\frac{\omega_{\omega_0}''(0)R_{p,\omega_0}}{16}(a^+)^4+o((a^+)^4).
\end{split}
\end{equation}
From the equation (\ref{rho-1}), we have the expansion
\begin{equation}\label{rho-ex}
(\rho(\vec{a}))^2\tbr{\tilde{\varphi}_{\omega_*},\partial_{\omega}\tilde{\varphi}_{\omega_0}}_{L^2}=\frac{(R_{p,\omega_0})^2}{16\tbr{\tilde{\varphi}_{\omega_*},\partial_{\omega}\tilde{\varphi}_{\omega_0}}_{L^2}}|\vec{a}|^4+o(|\vec{a}|^4).
\end{equation}
Since
\[S_{\omega_{\omega_0}(|\vec{a}|)}(\phi_{\omega_0}(|\vec{a}|))+(\omega_0-\omega_{\omega_0}(|\vec{a}|))Q(\tilde{\varphi}_{\omega_0}) =S_{\omega_{\omega_0}(\vec{a})}(\phi_{\omega_0}(\vec{a}))+(\omega_0-\omega_{\omega_0}(\vec{a}))Q(\tilde{\varphi}_{\omega_0}),\]
by (\ref{S-ex}) and (\ref{rho-ex}) we obtain the conclusion.
\end{proof}

We introduce the distance and tubular neighborhoods of $\tilde{\varphi}_{\omega_0}$ as follows.
Set for $\varepsilon>0$
\[ \mbox{dist}_{\omega_0}(u) = \inf_{\theta \in \R} \norm{u-e^{i \theta}\tilde{\varphi}_{\omega_0}}_{H^1},\]
\[N_{\varepsilon }=\{u \in H^1(\RTL)|\mbox{dist}_{\omega_0}(u)<\varepsilon \},\]
\[N_{\varepsilon }^0=\{ u \in N_{\varepsilon }| Q(u)=Q(\tilde{\varphi}_{\omega_0})\}.\]

Modulating the symmetry, we eliminate the degeneracy of the linearized operator around $\tilde{\varphi}_{\omega_0}$.
\begin{lemma}\label{modulation}
Let $\varepsilon >0$ sufficiently small.
Then, there exist $C^2$ function $\theta:N_\varepsilon  \to \R$, $\alpha : N_\varepsilon  \to \R$, $\vec{a}:N_\varepsilon \to U$ and $w:N_\varepsilon  \to H^1(\RTL)$ such that for $u \in N_\varepsilon $ 
\[ e^{i \theta(u)}u=\Phi(\vec{a}(u)) + w(u) + \alpha(u) \phi_{\omega_0}(\vec{a}(u)),\]
where $\tbr{w(u)+\alpha(u) \phi_{\omega_0}(\vec{a}(u)),\psi_{\omega_0} \cos (y/L)}_{L^2}=\tbr{w(u)+\alpha(u) \phi_{\omega_0}(\vec{a}(u)),\psi_{\omega_0} \sin (y/L)}_{L^2}=\tbr{w(u),\phi_{\omega_0}(\vec{a}(u))}_{L^2}=\tbr{w(u),i\phi_{\omega_0}(\vec{a}(u))}_{L^2}=0$.
\end{lemma}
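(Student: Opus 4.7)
The plan is to apply the implicit function theorem to recover the parameters $(\theta,\alpha,\vec{a})$ from $u$. Define the map $G: \R \times \R \times U \times N_{\varepsilon} \to \R^4$ by
\[ G(\theta,\alpha,\vec{a},u) = \begin{pmatrix}
\tbr{e^{i\theta}u - \Phi(\vec{a}), \psi_{\omega_0}\cos(y/L)}_{L^2} \\
\tbr{e^{i\theta}u - \Phi(\vec{a}), \psi_{\omega_0}\sin(y/L)}_{L^2} \\
\tbr{e^{i\theta}u - \Phi(\vec{a}) - \alpha \phi_{\omega_0}(\vec{a}), \phi_{\omega_0}(\vec{a})}_{L^2} \\
\tbr{e^{i\theta}u - \Phi(\vec{a}) - \alpha \phi_{\omega_0}(\vec{a}), i\phi_{\omega_0}(\vec{a})}_{L^2}
\end{pmatrix}. \]
Given a zero of $G$ and setting $w := e^{i\theta}u - \Phi(\vec{a}) - \alpha \phi_{\omega_0}(\vec{a})$, the first two components express the orthogonality of $w + \alpha \phi_{\omega_0}(\vec{a}) = e^{i\theta}u - \Phi(\vec{a})$ to $\psi_{\omega_0}\cos(y/L)$ and $\psi_{\omega_0}\sin(y/L)$, while the last two give $\tbr{w,\phi_{\omega_0}(\vec{a})}_{L^2} = \tbr{w,i\phi_{\omega_0}(\vec{a})}_{L^2} = 0$, exactly the conditions claimed.

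At the reference point $(0,0,\vec{0},\tilde{\varphi}_{\omega_0})$ we have $\Phi(\vec{0}) = \tilde{\varphi}_{\omega_0}$ by Lemma \ref{lem-curve}, so $G = 0$. I next compute $D := D_{(\theta,\alpha,\vec{a})}G$ at this point. Using $\partial_\theta (e^{i\theta}u)|_0 = i \tilde{\varphi}_{\omega_0}$, and exploiting the polar parametrization $\phi_{\omega_0}(\vec{a})(x,y) = \phi_{\omega_0}(a)(x,y+\tilde{a})$, the expansion $\phi_{\omega_0}(a) = \tilde{\varphi}_{\omega_0} + a\psi_{\omega_0}\cos(y/L) + O(a^2)$ from Proposition \ref{bifurcation} combined with the addition formula for cosine yields
\[ \partial_{a_1}\phi_{\omega_0}(\vec{a})|_{\vec{a}=0} = \psi_{\omega_0}\cos(y/L), \quad \partial_{a_2}\phi_{\omega_0}(\vec{a})|_{\vec{a}=0} = \psi_{\omega_0}\sin(y/L). \]
Furthermore $\partial_{a_j}\rho(\vec{0}) = 0$, because (\ref{rho-1}) combined with $Q(\phi_{\omega_0}(\vec{a})) - Q(\tilde{\varphi}_{\omega_0}) = O(|\vec{a}|^2)$ (from (\ref{norm-val})) forces $\rho(\vec{a}) = O(|\vec{a}|^2)$, so $\Phi$ and $\phi_{\omega_0}$ have the same first-order tangent at $\vec{0}$. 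The only nonvanishing entries of $D$ are therefore
\[ \partial_{a_1}G_1 = -\norm{\psi_{\omega_0}\cos(y/L)}_{L^2}^2, \quad \partial_{a_2}G_2 = -\norm{\psi_{\omega_0}\sin(y/L)}_{L^2}^2, \]
\[ \partial_\alpha G_3 = -\norm{\tilde{\varphi}_{\omega_0}}_{L^2}^2, \quad \partial_\theta G_4 = \norm{\tilde{\varphi}_{\omega_0}}_{L^2}^2, \]
with all cross-terms vanishing: $\tbr{i\tilde{\varphi}_{\omega_0},\psi_{\omega_0}\cos(y/L)}_{L^2}=\tbr{i\tilde{\varphi}_{\omega_0},\tilde{\varphi}_{\omega_0}}_{L^2}=0$ by the real--imaginary separation, $\tbr{\psi_{\omega_0}\cos(y/L),\psi_{\omega_0}\sin(y/L)}_{L^2}=0$, and $\int_{\T_L}\cos(y/L)\,dy = \int_{\T_L}\sin(y/L)\,dy = 0$ annihilates the mixed terms involving $\tilde{\varphi}_{\omega_0}$. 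Consequently $D$ is a permuted diagonal matrix with $\det D \neq 0$.

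The map $G$ is $C^2$ in $(\theta,\alpha,\vec{a})$ because, by Proposition \ref{bifurcation}, $\phi_{\omega_0}$ is $C^2$ in $a$ (and hence in $\vec{a}$), while $\rho$ inherits $C^2$ regularity from the quadratic nature of $Q$ via the argument of Lemma \ref{lem-curve}. The implicit function theorem then produces $C^2$ functions $\theta(u),\alpha(u),\vec{a}(u)$ on some tubular neighborhood $N_{\varepsilon}$ with $\vec{a}(\tilde{\varphi}_{\omega_0}) = \vec{0}$ and $\theta(\tilde{\varphi}_{\omega_0}) = \alpha(\tilde{\varphi}_{\omega_0}) = 0$, and setting $w(u) := e^{i\theta(u)}u - \Phi(\vec{a}(u)) - \alpha(u)\phi_{\omega_0}(\vec{a}(u))$ completes the construction. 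The only delicate point is the $C^2$-regularity in the borderline case $p=2$, where $|u|^{p-1}u$ is merely $C^1$ in the pointwise sense; this is handled exactly as in the sketch of Proposition \ref{bifurcation}, which establishes $C^2$-regularity of the bifurcation branch via the positivity of $\tilde{\varphi}_{\omega_0}$ and dominated convergence, so no new obstacle arises here.
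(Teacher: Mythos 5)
Your proof is correct and follows essentially the same route as the paper: an implicit function theorem applied to the orthogonality conditions against $i\phi_{\omega_0}(\vec{a})$, $\psi_{\omega_0}\cos(y/L)$ and $\psi_{\omega_0}\sin(y/L)$, with the same (permuted) diagonal Jacobian at $(\tilde{\varphi}_{\omega_0},0,0,0)$. The only cosmetic difference is that you fold $\alpha$ into a four-variable implicit function theorem, whereas the paper solves a three-variable system for $(\theta,a_1,a_2)$ and then defines $\alpha(u)$ explicitly as the projection coefficient $\tbr{e^{i\theta(u)}u-\Phi(\vec{a}(u)),\phi_{\omega_0}(\vec{a}(u))}_{L^2}/\norm{\phi_{\omega_0}(\vec{a}(u))}_{L^2}^2$; since your third equation is linear in $\alpha$ with nonvanishing coefficient, the two formulations are equivalent.
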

\begin{proof}
Let $\psi_{\omega_0,1}=\psi_{\omega_0} \cos (y/L)$ and $\psi_{\omega_0,2}=\psi_{\omega_0} \sin (y/L)$.
We define
\[ G(u,\theta,a_1,a_2)=
\begin{pmatrix}
\tbr{e^{i\theta}u-\Phi(\vec{a}), i \phi_{\omega_0}(\vec{a})}_{L^2}\\
\tbr{e^{i\theta}u-\Phi(\vec{a}), \psi_{\omega_0,1}}_{L^2}\\
\tbr{e^{i\theta}u-\Phi(\vec{a}),  \psi_{\omega_0,2}}_{L^2}\\
\end{pmatrix},\]
where $\vec{a}=(a_1,a_2)$.
Since $G(\tilde{\varphi}_{\omega_0},0,0,0)=0$ and
\[ \frac{\partial G}{\partial (\theta,a_1,a_2)}\bigg|_{(u,\theta,a_1,a_2)=(\tilde{\varphi}_{\omega_0},0,0,0)}=
\begin{pmatrix}
\norm{\tilde{\varphi}_{\omega_0}}_{L^2}^2 & 0 &0\\
0 & -\norm{\psi_{\omega_0,1}}_{L^2}^2 &0\\
0 & 0 & -\norm{\psi_{\omega_0,2}}_{L^2}^2\\
\end{pmatrix},\]
by the implicit theorem for sufficiently small $\varepsilon >0$ there exist $C^2$ functions $\theta:N_\varepsilon \to \R$ and $\vec{a}:N_\varepsilon \to U$ such that for $u \in N_\varepsilon $
\[G(u,\theta(u),\vec{a}(u))=0.\]
We define
\[ \alpha(u) = \frac{\tbr{e^{i\theta(u)}u-\Phi(\vec{a}(u)),\phi_{\omega_0}(\vec{a}(u))}_{L^2}}{\norm{\phi_{\omega_0}(\vec{a}(u))}_{L^2}^2},\]
and
\[w(u)=e^{i\theta(u)}u-\Phi(\vec{a}(u))-\alpha(u) \phi_{\omega_0}(\vec{a}(u)).\]
Then, the conclusion follows the definition of $w$.
\end{proof}

In the following lemma, we show the estimate of $\alpha(u)$ for $u \in N_\varepsilon ^0$.
\begin{lemma}\label{alpha-est}
Let $\varepsilon >0$ sufficiently small.
There exists $C>0$ such that for $u \in N_\varepsilon ^0$,
\[|\alpha(u)|\leq C\norm{w(u)}_{L^2}(\rho(\vec{a}(u))+\norm{w(u)}_{L^2}).\]
\end{lemma}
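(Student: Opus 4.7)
The plan is to exploit the single charge constraint $Q(u) = Q(\tilde{\varphi}_{\omega_0})$ that defines $N_\varepsilon^0$, combined with the defining equation $Q(\Phi(\vec{a})) = Q(\tilde{\varphi}_{\omega_0})$ from Lemma~\ref{lem-curve}, to write a scalar equation determining $\alpha(u)$, and then to extract the claimed bound via the orthogonality relations of Lemma~\ref{modulation}.

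First, since $Q$ is phase-invariant, the condition $u \in N_\varepsilon^0$ gives $Q(e^{i\theta(u)}u) = Q(\tilde{\varphi}_{\omega_0}) = Q(\Phi(\vec{a}(u)))$. Using the decomposition from Lemma~\ref{modulation}, this rewrites as
\[
Q\bigl(\Phi(\vec{a}(u)) + w(u) + \alpha(u)\phi_{\omega_0}(\vec{a}(u))\bigr) = Q(\Phi(\vec{a}(u))).
\]
Expanding the quadratic form $Q$ via the polarization identity $Q(f+g) = Q(f) + \tbr{f,g}_{L^2} + Q(g)$, this is equivalent to
\[
\tbr{\Phi(\vec{a}(u)),\, w(u) + \alpha(u)\phi_{\omega_0}(\vec{a}(u))}_{L^2} + Q\bigl(w(u) + \alpha(u)\phi_{\omega_0}(\vec{a}(u))\bigr) = 0.
\]

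Second, I would substitute $\Phi(\vec{a}) = \phi_{\omega_0}(\vec{a}) + \rho(\vec{a})\partial_\omega \tilde{\varphi}_{\omega_0}$ and use the orthogonality $\tbr{w(u),\phi_{\omega_0}(\vec{a}(u))}_{L^2} = 0$ from Lemma~\ref{modulation}. This kills several cross terms and leaves an algebraic equation of the form
\[
\alpha(u)\,A(\vec{a}(u)) = -\rho(\vec{a}(u))\tbr{\partial_\omega \tilde{\varphi}_{\omega_0},\, w(u)}_{L^2} - Q(w(u)) - \alpha(u)^2 Q(\phi_{\omega_0}(\vec{a}(u))),
\]
where $A(\vec{a}) = \norm{\phi_{\omega_0}(\vec{a})}_{L^2}^2 + \rho(\vec{a})\tbr{\partial_\omega\tilde{\varphi}_{\omega_0},\phi_{\omega_0}(\vec{a})}_{L^2}$. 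As $\vec{a}\to 0$ and $\rho(\vec{a})\to 0$, one has $A(\vec{a})\to \norm{\tilde{\varphi}_{\omega_0}}_{L^2}^2 > 0$, so for $\varepsilon$ small enough $A$ is bounded from below by a positive constant. Applying Cauchy--Schwarz to the first term on the right and bounding $Q(w) \leq \tfrac12\norm{w}_{L^2}^2$ yields
\[
|\alpha(u)| \leq C\norm{w(u)}_{L^2}\bigl(\rho(\vec{a}(u)) + \norm{w(u)}_{L^2}\bigr) + C|\alpha(u)|^2.
\]

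The only remaining issue is to absorb the quadratic term $C|\alpha(u)|^2$. Here I would use that the modulation map of Lemma~\ref{modulation} is $C^2$ and vanishes at $u = \tilde{\varphi}_{\omega_0}$, so $|\alpha(u)| = O(\varepsilon)$ on $N_\varepsilon^0$. Consequently $C|\alpha(u)|^2 \leq C\varepsilon\,|\alpha(u)|$, and for $\varepsilon$ sufficiently small we may absorb this term into the left-hand side to obtain $|\alpha(u)|(1 - C\varepsilon) \leq C\norm{w(u)}_{L^2}(\rho(\vec{a}(u)) + \norm{w(u)}_{L^2})$, which is the desired estimate. The main (minor) obstacle is just keeping track of the orthogonality conditions that cause the linear-in-$\alpha$ cross terms $\tbr{w,\phi_{\omega_0}(\vec{a})}_{L^2}$ to vanish and confirming that the coefficient $A(\vec{a})$ is uniformly bounded below --- both of which follow directly from the construction in Lemmas~\ref{lem-curve} and~\ref{modulation}.
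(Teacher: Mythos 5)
Your proposal is correct and follows essentially the same route as the paper: expand $Q(\Phi(\vec{a})+w+\alpha\phi_{\omega_0}(\vec{a}))=Q(\tilde{\varphi}_{\omega_0})$ using $Q(\Phi(\vec{a}))=Q(\tilde{\varphi}_{\omega_0})$ and $\tbr{w,\phi_{\omega_0}(\vec{a})}_{L^2}=0$, then solve the resulting scalar equation for $\alpha$ noting the linear coefficient stays bounded below. Your explicit absorption of the $C|\alpha|^2$ term is just a more careful spelling-out of the paper's terse "since $\rho(\vec{a}(u))\to 0$" conclusion.
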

\begin{proof}
By Lemma \ref{modulation}, for $u \in N_\varepsilon ^0$,
\[\begin{split}
Q(\tilde{\varphi}_{\omega_0})=&Q(\Phi(\vec{a}(u))+w(u)+\alpha(u) \phi_{\omega_0}(\vec{a}(u)))\\
=& Q(\tilde{\varphi}_{\omega_0}) + \alpha(u)\norm{\phi_{\omega_0}(\vec{a}(u))}_{L^2}^2+\rho(\vec{a}(u))\alpha(u)\tbr{\partial_{\omega}\tilde{\varphi}_{\omega_0},\phi_{\omega_0}(\vec{a}(u))}_{L^2} \\
&+ \rho(\vec{a}(u))\tbr{\partial_{\omega}\tilde{\varphi}_{\omega_0},w(u)}_{L^2}+Q(w(u))+(\alpha(u))^2Q(\phi_{\omega_0}(\vec{a}(u))).
\end{split}\]
Since $\rho(\vec{a}(u)) \to 0$ as $\varepsilon \to 0$, we obtain the conclusion.
\end{proof}

Next, we prove the coerciveness of the linearized operator around $\tilde{\varphi}_{\omega_0}$.
\begin{lemma}\label{coerciveness}
There exist $k_0>0$ and $\varepsilon _0>0$ such that for $a_1,a_2,\alpha \in (-\varepsilon _0,\varepsilon _0)$, if $w \in H^1(\RTL)$ with $\tbr{w,\phi_{\omega_0}(\vec{a})}_{L^2}=\tbr{w,i\phi_{\omega_0}(\vec{a})}_{L^2}=\tbr{w+\alpha \phi_{\omega_0}(\vec{a}),\psi_{\omega_0}\cos (y/L)}_{L^2}=\tbr{w+\alpha \phi_{\omega_0}(\vec{a}),\psi_{\omega_0}\sin (y/L)}_{L^2}=0$, then 
\[ \tbr{S''_{\omega_0}(\Phi(\vec{a}))w,w}_{H^{-1},H^1} \geq k_0 \norm{w}_{H^1}^2,\]
where $\vec{a}=(a_1,a_2)$.
\end{lemma}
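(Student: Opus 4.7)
The plan is to reduce the estimate to the base case $\vec{a}=0$, $\alpha=0$ by a perturbation/continuity argument, and then establish the coerciveness at that base point by a Vakhitov--Kolokolov-type spectral analysis. Writing $w=w_R+iw_I$ with real $w_R,w_I$, one has
\[
\tbr{S''_{\omega_0}(\Phi(\vec{a}))w,w}_{H^{-1},H^1}=\tbr{\mathbb{L}^+(\vec{a})w_R,w_R}_{H^{-1},H^1}+\tbr{\mathbb{L}^-(\vec{a})w_I,w_I}_{H^{-1},H^1},
\]
where $\mathbb{L}^\pm(\vec{a})=-\Delta+\omega_0+V-\kappa_\pm|\Phi(\vec{a})|^{p-1}$ with $\kappa_+=p$, $\kappa_-=1$. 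Since $\Phi(\vec{a})\to \tilde{\varphi}_{\omega_0}$ in $H^2$ as $\vec{a}\to 0$, the operators $\mathbb{L}^\pm(\vec{a})$ converge to $\mathbb{L}^\pm_{\omega_0}$ in operator norm and the projections onto the constraint subspaces vary continuously in $(\vec{a},\alpha)$. Consequently it suffices to prove the inequality at $\vec{a}=0$, $\alpha=0$; the extension to a neighbourhood follows by a standard compactness/contradiction argument, in which a sequence $(\vec{a}_n,\alpha_n,w_n)$ violating the uniform bound would yield, via a weak $H^1$-limit and Rellich compactness, a nonzero $w_\infty$ satisfying the base-point constraints with $\tbr{S''_{\omega_0}(\tilde{\varphi}_{\omega_0})w_\infty,w_\infty}_{H^{-1},H^1}\leq 0$, contradicting the base case.

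At $\vec{a}=0$, $\alpha=0$, noting $\tbr{\tilde{\varphi}_{\omega_0},\psi_{\omega_0}\cos(y/L)}_{L^2}=\tbr{\tilde{\varphi}_{\omega_0},\psi_{\omega_0}\sin(y/L)}_{L^2}=0$ by $y$-integration, the hypotheses reduce to
\[
\tbr{w_R,\tilde{\varphi}_{\omega_0}}_{L^2}=\tbr{w_I,\tilde{\varphi}_{\omega_0}}_{L^2}=\tbr{w_R,\psi_{\omega_0}\cos(y/L)}_{L^2}=\tbr{w_R,\psi_{\omega_0}\sin(y/L)}_{L^2}=0.
\]
Decomposing in Fourier modes in $y$, the operator $\mathbb{L}^-_{\omega_0}$ acts on the $n$-th mode as $L^-_{\omega_0}+n^2/L^2$; since $L^-_{\omega_0}\geq 0$ with kernel spanned by $\varphi_{\omega_0}$, the full operator is non-negative on $L^2(\RTL)$ with kernel precisely $\mbox{\rm Span}\{\tilde{\varphi}_{\omega_0}\}$. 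Hence the orthogonality $\tbr{w_I,\tilde{\varphi}_{\omega_0}}_{L^2}=0$, combined with a mode-by-mode version of the argument in Lemma \ref{L-est}, yields $\tbr{\mathbb{L}^-_{\omega_0}w_I,w_I}_{H^{-1},H^1}\geq c\norm{w_I}_{H^1}^2$.

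For the real part, the same Fourier decomposition shows that $\mathbb{L}^+_{\omega_0}$ has one negative eigenvalue $-\lambda_{\omega_0}$ with eigenfunction $\psi_{\omega_0}$ (from $n=0$), a two-dimensional kernel $\mbox{\rm Span}\{\psi_{\omega_0}\cos(y/L),\psi_{\omega_0}\sin(y/L)\}$ (from $n=\pm 1$, using $L=(\lambda_{\omega_0})^{-1/2}$), and a uniform positive spectral gap elsewhere. The last two constraints on $w_R$ annihilate the kernel directly, so the task is to control the negative eigendirection using only $\tbr{w_R,\tilde{\varphi}_{\omega_0}}_{L^2}=0$. Differentiating $S_\omega'(\tilde{\varphi}_\omega)=0$ in $\omega$ gives $\mathbb{L}^+_{\omega_0}\partial_\omega\tilde{\varphi}_{\omega_0}=-\tilde{\varphi}_{\omega_0}$, whence
\[
\tbr{(\mathbb{L}^+_{\omega_0})^{-1}\tilde{\varphi}_{\omega_0},\tilde{\varphi}_{\omega_0}}_{L^2}=-\tbr{\partial_\omega\tilde{\varphi}_{\omega_0},\tilde{\varphi}_{\omega_0}}_{L^2}=-\frac{1}{2}\partial_\omega\norm{\tilde{\varphi}_\omega}_{L^2}^2|_{\omega=\omega_0}.
\]
Lemma \ref{l-4-1} gives $\norm{\tilde{\varphi}_\omega}_{L^2}^2\sim C(\omega-\lambda_*)^{2/(p-1)}$ with $C>0$, so this derivative is strictly positive for $\omega_0$ close to $\lambda_*$, making the left-hand side negative. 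Weinstein's classical criterion (applied after quotienting out the two-dimensional kernel) then yields $\tbr{\mathbb{L}^+_{\omega_0}w_R,w_R}_{H^{-1},H^1}\geq c'\norm{w_R}_{H^1}^2$.

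The main obstacle is making the perturbation step uniform: one must ensure the coercivity constant stays bounded below as $(\vec{a},\alpha)$ varies through a small neighbourhood of the origin, since both the operator $S''_{\omega_0}(\Phi(\vec{a}))$ and the constraint subspace move simultaneously, and the four constraint vectors $\phi_{\omega_0}(\vec{a}),i\phi_{\omega_0}(\vec{a}),\psi_{\omega_0}\cos(y/L),\psi_{\omega_0}\sin(y/L)$ are not orthogonal to each other once $\vec{a}\neq 0$. This is handled either by a direct quantitative perturbation bound, using that the Gram matrix of the constraint vectors remains uniformly non-degenerate and the operators converge in norm, or by the compactness/contradiction argument sketched in the first paragraph.
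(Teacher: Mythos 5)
Your proof is correct and shares the paper's skeleton (coercivity at the base point $\vec{a}=0$, $\alpha=0$, then perturbation in $(\vec{a},\alpha)$), but it substitutes two ingredients. For the $n=0$ real mode you invoke the Vakhitov--Kolokolov/Weinstein criterion, computing $\tbr{(\mathbb{L}^+_{\omega_0})^{-1}\tilde{\varphi}_{\omega_0},\tilde{\varphi}_{\omega_0}}_{L^2}=-\tfrac12\partial_\omega\norm{\tilde{\varphi}_\omega}_{L^2}^2|_{\omega=\omega_0}<0$ from Lemma \ref{l-4-1}; the paper instead cites its Lemma \ref{L-est}, which proves the same coercivity of $L^+_{\omega_0}$ on $\{\varphi_{\omega_0}\}^{\bot}$ by a direct expansion around $\psi_*$. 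The two are equivalent in substance --- both ultimately use the smallness of $\omega_0-\lambda_*$ --- and citing Lemma \ref{L-est} is simply the more economical route since it is already proved; your version has the advantage of making the monotonicity of $\omega\mapsto Q(\tilde{\varphi}_\omega)$ explicit as the mechanism. For the perturbation step the paper does exactly your ``direct quantitative'' option: it writes $w=b_1\tilde{\varphi}_{\omega_0}+b_2 i\tilde{\varphi}_{\omega_0}+b_3\psi_{\omega_0}\cos(y/L)+b_4\psi_{\omega_0}\sin(y/L)+w_{\bot}$ with $w_{\bot}$ orthogonal to all four base vectors, uses the moved constraints to get $b_j=O((|\vec{a}|+|\alpha|)\norm{w_{\bot}}_{L^2})$, and absorbs the operator difference $S''_{\omega_0}(\Phi(\vec{a}))-S''_{\omega_0}(\tilde{\varphi}_{\omega_0})$ (small in $H^1\to H^{-1}$ norm since $\Phi(\vec{a})\to\tilde{\varphi}_{\omega_0}$ in $H^2\hookrightarrow L^\infty$); no compactness is needed. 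Your compactness alternative would require extra care on the unbounded domain $\RTL$: global Rellich compactness fails, so the weak limit $w_\infty$ may vanish (mass escaping in $x$), and that case must be handled separately using the decay of $V$ and of $|\Phi(\vec{a})|^{p-1}$ to see that the quadratic form is then bounded below by $\min(1,\omega_0)\norm{w_n}_{H^1}^2+o(1)$. Since you also supply the direct route, this wrinkle does not affect the correctness of the proposal.
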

\begin{proof}
Let $\psi_{\omega_0,1}=\psi_{\omega_0} \cos (y/L)$ and $\psi_{\omega_0,2}=\psi_{\omega_0} \sin (y/L)$.
For $w \in H^1(\RTL)$ with $\tbr{w,\phi_{\omega_0}(\vec{a})}_{L^2}=\tbr{w,i\phi_{\omega_0}(\vec{a})}_{L^2}=\tbr{w+\alpha \phi_{\omega_0}(\vec{a}),\psi_{\omega_0,1}}_{L^2}=\tbr{w+\alpha \phi_{\omega_0}(\vec{a}),\psi_{\omega_0,2}}_{L^2}=0$,
we decompose $w=b_1 \tilde{\varphi}_{\omega_0}+b_2i \tilde{\varphi}_{\omega_0}+b_3\psi_{\omega_0,1}+b_4\psi_{\omega_0,2}+w_{\bot}$, where
$\tbr{w_{\bot},\tilde{\varphi}_{\omega_0}}_{L^2}=\tbr{w_{\bot},i\tilde{\varphi}_{\omega_0}}_{L^2}=\tbr{w_{\bot},\psi_{\omega_0,1}}_{L^2}=\tbr{w_{\bot},\psi_{\omega_0,2}}_{L^2}=0$, $b_j \in \R$ for $j \in \{1,2,3,4\}$.
By the non-negativeness of $L_{\omega_0}^-$ and $L_{\omega_0}^++\lambda_{\omega_0}$, Proposition \ref{eigenvalue} and Lemma \ref{L-est}, there exists $c>0$ such that $\tbr{S_{\omega_0}''(\tilde{\varphi}_{\omega_0})w_{\bot},w_{\bot}}_{H^{-1},H^1} \geq c \norm{w_{\bot}}_{H^1}^2$, where $c$ is independent of $w_{\bot}$.
Then, from the orthogonal conditions for $w$, we have for $j\in \{1,2,3,4\}$, $b_j=O((|\vec{a}|+|\alpha|)\norm{w_{\bot}}_{L^2})$ as $|\vec{a}|+|\alpha| \to 0$.
Therefore, there exist $\varepsilon _0,k_0>0$ such that for $a_1,a_2,\alpha \in (-\varepsilon _0,\varepsilon _0)$,
\[ \begin{split}
\tbr{S''_{\omega_0}(\Phi(\vec{a}))w,w}_{H^{-1},H^1} =& \tbr{S''_{\omega_0}(\tilde{\varphi}_{\omega_0})w_{\bot},w_{\bot}}_{H^{-1},H^1}+\sum_{j=1}^4 b_j^2 + o(\norm{w_{\bot}}_{L^2}^2)\\
\geq & k_0 \norm{w}_{H^1}^2.\\
\end{split}\]
\end{proof}

In the following lemma, we investigate the variational structure of $S_{\omega_0}$ around $\tilde{\varphi}_{\omega_0}$.
\begin{lemma}\label{v-st}
Let $\varepsilon >0$ sufficiently small.
For $u \in N_\varepsilon ^0$ 
\[\begin{split}
S_{\omega_0}(u)-S_{\omega_0}(\tilde{\varphi}_{\omega_0})=&\frac{1}{2}\tbr{S_{\omega_0}''(\tilde{\varphi}_{\omega_0})w(u),w(u)}_{H^{-1},H^1}+ C_{**}R_{p,\omega_0}|\vec{a}(u)|^4\\
& +o(\norm{w(u)}_{H^1}^2)+o(|\vec{a}(u)|^4),
\end{split}\]
where $w(u)$ and $\vec{a}(u)$ are defined by Lemma \ref{modulation} and 
\[C_{**}=\frac{\frac{d}{d \omega}\lambda_{\omega_0}\norm{\psi_{\omega_0}\cos (y/L)}_{L^2}^2}{16\tbr{\tilde{\varphi}_{\omega_0},\partial_{\omega}\tilde{\varphi}_{\omega_0}}_{L^2}}.\]
\end{lemma}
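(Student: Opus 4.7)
The plan is to leverage gauge invariance, Lemma~\ref{modulation} and a Taylor expansion at $\Phi(\vec{a}(u))$, and then reduce the remaining bulk term to Lemma~\ref{lem-action}. Set $v(u):=w(u)+\alpha(u)\phi_{\omega_0}(\vec{a}(u))$ so that $e^{i\theta(u)}u=\Phi(\vec{a}(u))+v(u)$. By gauge invariance,
\[S_{\omega_0}(u)-S_{\omega_0}(\tilde{\varphi}_{\omega_0})=\bigl[S_{\omega_0}(\Phi(\vec{a})+v)-S_{\omega_0}(\Phi(\vec{a}))\bigr]+\bigl[S_{\omega_0}(\Phi(\vec{a}))-S_{\omega_0}(\tilde{\varphi}_{\omega_0})\bigr],\]
and by Lemma~\ref{lem-action} the last bracket equals $C_{**}R_{p,\omega_0}|\vec{a}|^4+o(|\vec{a}|^4)$. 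Since $p\geq 2$, $S_{\omega_0}\in C^2(H^1,\R)$, so Taylor's formula gives
\[S_{\omega_0}(\Phi(\vec{a})+v)-S_{\omega_0}(\Phi(\vec{a}))=\tbr{S'_{\omega_0}(\Phi(\vec{a})),v}_{L^2}+\tfrac12\tbr{S''_{\omega_0}(\Phi(\vec{a}))v,v}_{H^{-1},H^1}+o(\norm{v}_{H^1}^2).\]

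The first-order term is the delicate one. Since $\phi_{\omega_0}(\vec{a})$ is a critical point of $S_{\omega_{\omega_0}(\vec{a})}$, I get $S'_{\omega_0}(\phi_{\omega_0}(\vec{a}))=(\omega_0-\omega_{\omega_0}(\vec{a}))\phi_{\omega_0}(\vec{a})$. Linearising in the correction $\rho(\vec{a})\partial_\omega \tilde{\varphi}_{\omega_0}$ and using the identity $S''_{\omega_0}(\tilde{\varphi}_{\omega_0})\partial_\omega\tilde{\varphi}_{\omega_0}=-\tilde{\varphi}_{\omega_0}$ (obtained by differentiating $S'_\omega(\varphi_\omega)=0$ in $\omega$) yields
\[S'_{\omega_0}(\Phi(\vec{a}))=(\omega_0-\omega_{\omega_0}(\vec{a}))\phi_{\omega_0}(\vec{a})-\rho(\vec{a})\tilde{\varphi}_{\omega_0}+O\!\bigl(\rho(\vec{a})|\vec{a}|+\rho(\vec{a})^2\bigr).\]
I would pair this with the unfolded mass constraint $\tbr{\Phi(\vec{a}),v}_{L^2}=-\tfrac12\norm{v}_{L^2}^2$ (from $Q(u)=Q(\Phi(\vec{a}))$), the scalings $\omega_0-\omega_{\omega_0}(\vec{a})=O(|\vec{a}|^2)$ and $\rho(\vec{a})=O(|\vec{a}|^2)$ coming from \eqref{omega-ex} and \eqref{rho-1}, the orthogonality conditions of Lemma~\ref{modulation}, and the bound $|\alpha|\leq C\norm{w}_{L^2}(\rho+\norm{w}_{L^2})$ from Lemma~\ref{alpha-est}, to conclude that each resulting monomial in $|\vec{a}|$, $\norm{w}$, $\rho$ and $\alpha$ is $o(\norm{w}_{H^1}^2+|\vec{a}|^4)$ by elementary Young-type inequalities (exploiting that $|\vec{a}|,\norm{w}_{H^1}\to 0$ jointly).

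For the second-order term I would use continuity of $u\mapsto S''_{\omega_0}(u)$ in the $H^2$-topology to freeze the Hessian at $\tilde{\varphi}_{\omega_0}$ modulo an $O(|\vec{a}|\norm{v}_{H^1}^2)$-error, then expand $v=w+\alpha\phi_{\omega_0}(\vec{a})$; the $\alpha$-cross and $\alpha^2$ terms are $o(\norm{w}_{H^1}^2)$ thanks to $|\alpha|=o(\norm{w}_{H^1})$, leaving $\tfrac12\tbr{S''_{\omega_0}(\tilde{\varphi}_{\omega_0})w,w}_{H^{-1},H^1}$. Summing the three contributions proves the lemma. The principal technical obstacle is the first-order bookkeeping: one needs the cancellation produced by the mass constraint together with Lemma~\ref{alpha-est} to show that the mixed terms $|\vec{a}|^3\norm{w}$, $\rho\norm{w}$, $|\vec{a}|^2\norm{v}^2$, and the like all lie in the claimed $o(\norm{w}_{H^1}^2+|\vec{a}|^4)$ class rather than merely $O$ of it, which is exactly where the joint smallness of $|\vec{a}|$ and $\norm{w}_{H^1}$ has to be exploited.
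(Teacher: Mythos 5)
Your proposal is correct and follows essentially the same route as the paper: Taylor expansion of $S_{\omega_0}$ at $\Phi(\vec{a}(u))$, Lemma \ref{lem-action} for the bulk term, the Euler--Lagrange identity $S_{\omega_{\omega_0}(\vec{a})}'(\phi_{\omega_0}(\vec{a}))=0$ together with $S_{\omega_0}''(\tilde{\varphi}_{\omega_0})\partial_{\omega}\tilde{\varphi}_{\omega_0}=-\tilde{\varphi}_{\omega_0}$, $\rho(\vec{a})=O(|\vec{a}|^2)$ and Lemma \ref{alpha-est} to kill the first-order term, and freezing the Hessian for the quadratic term. The only cosmetic difference is that you absorb part of the first-order bookkeeping into the unfolded mass constraint $\tbr{\Phi(\vec{a}),v}_{L^2}=-\tfrac12\norm{v}_{L^2}^2$, whereas the paper uses the orthogonality $\tbr{\phi_{\omega_0}(\vec{a}(u)),w(u)}_{L^2}=0$ directly; both yield the same $o(\norm{w}_{H^1}^2)+o(|\vec{a}|^4)$ error class.
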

\begin{proof}
Let $u \in N_\varepsilon ^0$. 
By Lemma \ref{modulation} and Lemma \ref{alpha-est}, we have
\[ \begin{split}
&S_{\omega_0}(u)-S_{\omega_0}(\tilde{\varphi}_{\omega_0})\\
=& S_{\omega_0}(\Phi(\vec{a}(u))+w(u)+\alpha(u)\phi_{\omega_0}(\vec{a}(u)))-S_{\omega_0}(\tilde{\varphi}_{\omega_0})\\
=& S_{\omega_0}(\Phi(\vec{a}(u)))-S_{\omega_0}(\tilde{\varphi}_{\omega_0})+\tbr{S_{\omega_0}'(\Phi(\vec{a}(u))),w(u)+\alpha(u)\phi_{\omega_0}(\vec{a}(u))}_{H^{-1},H^1}\\
&+\frac{1}{2}\tbr{S_{\omega_0}''(\Phi(\vec{a}(u))w(u),w(u)}_{H^{-1},H^1}+o(\norm{w(u)}_{H^1}^2).
\end{split}\]
Since $\rho(\vec{a}(u))=O(|\vec{a}(u)|^2)$ as $\mbox{dist}_{\omega_0}(u) \to 0$, $\tbr{\phi_{\omega_0}(\vec{a}(u)),w(u)}_{L^2}=0$ and $S_{\omega_0}''(\tilde{\varphi}_{\omega_0})\partial_{\omega}\tilde{\varphi}_{\omega_0}=-\tilde{\varphi}_{\omega_0}$, we have
\[ \begin{split}
&\tbr{S_{\omega_0}'(\Phi(\vec{a}(u))),w(u)}_{H^{-1},H^1}\\
=&\tbr{S_{\omega_{\omega_0}(\vec{a}(u))}'(\Phi(\vec{a}(u)))+(\omega_0-\omega_{\omega_0}(\vec{a}))\Phi(\vec{a}(u))),w(u)}_{H^{-1},H^1}\\
=&\tbr{S_{\omega_{\omega_0}(\vec{a}(u))}''(\phi_{\omega_0}(\vec{a}(u)))\rho(\vec{a}(u))\partial_{\omega}\tilde{\varphi}_{\omega_0},w(u)}_{H^{-1},H^1} + o(|\vec{a}|^4)+o(\norm{w(u)}_{H^1}^2)\\
=&\tbr{(S_{\omega_{\omega_0}(\vec{a}(u))}''(\phi_{\omega_0}(\vec{a}(u)))-S_{\omega_0}''(\tilde{\varphi}_{\omega_0}))\rho(\vec{a}(u))\partial_{\omega}\tilde{\varphi}_{\omega_0},w(u)}_{H^{-1},H^1}\\
&-\rho(\vec{a}(u))\tbr{\tilde{\varphi}_{\omega_0}-\phi_{\omega_0}(\vec{a}(u)),w(u)}_{L^2} + o(|\vec{a}(u)|^4)+o(\norm{w(u)}_{H^1}^2)\\
=& o(|\vec{a}(u)|^4)+o(\norm{w(u)}_{H^1}^2).
\end{split}\]
By Lemma \ref{alpha-est} and the continuity of $S_{\omega_0}'(\Phi(\vec{a}))$ and $\phi_{\omega_0}(\vec{a})$ at $\vec{a}=0$, we have 
\[ \tbr{S_{\omega_0}'(\Phi(\vec{a}(u))),\alpha(u)\phi_{\omega_0}(\vec{a}(u))}_{H^{-1},H^1}=o(|\vec{a}(u)|^4)+o(\norm{w(u)}_{H^1}^2).\]
Therefore, from Lemma \ref{lem-action}, we have the conclusion.
\end{proof}

\subsection{The proof of (i) of Proposition \ref{c-stability}}
In this subsection, we prove (i) of Proposition \ref{c-stability}.
Let $0<\varepsilon \ll 1$. 
By Lemma \ref{v-st} and $R_{p,\omega_0}>0$, for small $\varepsilon  $ we have that there exists $c>0$ such that for $u \in N_\varepsilon ^0$ 
\begin{equation}\label{s-est-1}
 S_{\omega_0}(u)-S_{\omega_0}(\tilde{\varphi}_{\omega_0})\geq c(\norm{w(u)}_{H^1}^2+|\vec{a}(u)|^4),
\end{equation}
where $w(u),\vec{a}(u)$ are defined by Lemma \ref{modulation}.
We suppose that there exist $\varepsilon _0>0$, a sequence $\{u_n\}_n$ of solutions and a sequence $\{t_n\}_n$ such that $t_n>0$ and $u_n(0) \to \tilde{\varphi}_{\omega_0}$ in $H^1$ and 
\[ \inf_{\theta \in \R} \norm{u_n(t_n)-e^{i\theta}\tilde{\varphi}_{\omega_0}}_{H^1} > \varepsilon _0.\]
Let
\[ v_n=\sqrt{\frac{Q(\tilde{\varphi}_{\omega_0})}{Q(u_n)}}u_n(t_n).\]
Since $Q(v_n)=Q(\tilde{\varphi}_{\omega_0})$ and $Q(u_n) \to Q(\tilde{\varphi}_{\omega_0})$ as $n \to \infty$, $\norm{v_n-u_n(t_n)}_{H^1} \to 0$ and $S_{\omega_0}(v_n)-S_{\omega_0}(\tilde{\varphi}_{\omega_0}) \to 0$ as $n \to \infty$.
By the equation (\ref{s-est-1}), $\vec{a}(u_n(t_n)) \to 0$, $\alpha(u_n(t_n)) \to 0$ and $w(u_n(t_n)) \to 0$ in $H^1$ as $n \to \infty$.
Therefore,
\[ \inf_{\theta \in \R} \norm{u_n(t_n)-e^{i\theta}\tilde{\varphi}_{\omega_0}}_{H^1}  \to 0 \quad \mbox{as } n \to \infty.\]
This is a contradiction.
We complete the proof of (i).

\subsection{The proof of (ii) of Proposition \ref{c-stability}}
In this subsection, we prove (ii) of Proposition \ref{c-stability}.
Let $0<\varepsilon \ll 1$.
We define the functions $A(u)$ and $P(u)$ as
\[A(u)=\tbr{e^{i\theta(u)}u,-i[a_1(u)\partial_{a_1}\Phi(\vec{a}(u)) + a_2(u)\partial_{a_2}\Phi(\vec{a}(u))]}_{L^2},\]
\[P(u)=\tbr{S_{\omega_{\omega_0}(\vec{a}(u))}'(u),iA'(u)}_{H^{-1},H^1},\]
for $u \in N_\varepsilon $, where $\theta(u)$ and $\vec{a}(u)$ are defined by Lemma \ref{modulation}.

Then
\[\begin{split}
A'(u)=&-ie^{-i\theta(u)}[a_1(u)\partial_{a_1}\Phi(\vec{a}(u)) + a_2(u)\partial_{a_2}\Phi(\vec{a}(u))] \\
&+ \tbr{ie^{i\theta(u)}u,-i[a_1(u)\partial_{a_1}\Phi(\vec{a}(u)) + a_2(u)\partial_{a_2}\Phi(\vec{a}(u))]}_{L^2}\theta'(u)\\
&+\tbr{e^{i\theta(u)}u,-i[\partial_{a_1}\Phi(\vec{a}(u))+a_1(u)\partial_{a_1}\partial_{a_1}\Phi(\vec{a}(u)) + a_2(u)\partial_{a_1}\partial_{a_2}\Phi(\vec{a}(u))]}_{L^2}a_1'(u)\\
&+\tbr{e^{i\theta(u)}u,-i[a_1(u)\partial_{a_1}\partial_{a_2}\Phi(\vec{a}(u)) +\partial_{a_2}\Phi(\vec{a}(u))+ a_2(u)\partial_{a_2}\partial_{a_2}\Phi(\vec{a}(u))]}_{L^2}a_2'(u),
\end{split}\]
\[\tbr{iA'(u),Q'(u)}_{L^2}=-\tbr{A'(u),iu}_{L^2}=\frac{dA(e^{i\theta}u)}{d\theta}\biggl|_{\theta=0}=0.\]
Therefore, for any solution $u(t)$ of (\ref{NLS})
\[\begin{split}
 \frac{dA(u(t))}{dt}=\tbr{A'(u(t)),-iE'(u(t))}_{H^{-1},H^1} =&\tbr{iA'(u(t)),E'(u(t))+\omega(\vec{a}(u(t)))Q'(u(t))}_{H^{-1},H^1}\\
 =&P(u(t)).
 \end{split}\]
Next, we investigate the function $P$.
\begin{lemma}\label{cal-P}
For $\vec{a} \in U$,
\[P(\Phi(\vec{a}))=-|\vec{a}|^2\rho(\vec{a})\frac{d\lambda_{\omega}}{d\omega}|_{\omega=\omega_0}\norm{\psi_{\omega_0}\cos \frac{y}{L}}_{L^2}^2+o(\rho(\vec{a})^2).\]
\end{lemma}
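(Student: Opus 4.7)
}
The plan is to compute $P(\Phi(\vec a))=\langle S'_{\omega_{\omega_0}(\vec a)}(\Phi(\vec a)),iA'(\Phi(\vec a))\rangle_{H^{-1},H^1}$ by expanding each factor to the appropriate order and exploiting the fact that $\phi_{\omega_0}(\vec a)$ is a critical point of $S_{\omega_{\omega_0}(\vec a)}$. First I would evaluate the modulation parameters at $u=\Phi(\vec a)$, obtaining $\theta(\Phi(\vec a))=0$ and $\vec a(\Phi(\vec a))=\vec a$, and then simplify $A'(\Phi(\vec a))$. The representation $A'(u)=e^{-i\theta(u)}B(\vec a(u))+\gamma_0(u)\theta'(u)+\sum_j\gamma_j(u)a_j'(u)$ with $B(\vec a)=-i[a_1\partial_{a_1}\Phi(\vec a)+a_2\partial_{a_2}\Phi(\vec a)]$ contains the scalar coefficients $\gamma_0(\Phi(\vec a))=\re\int i\Phi(\vec a)\overline{B(\vec a)}=-\frac12(a_1\partial_{a_1}+a_2\partial_{a_2})\|\Phi(\vec a)\|_{L^2}^2$ and $\gamma_j(\Phi(\vec a))=\re\int\Phi(\vec a)\overline{\partial_{a_j}B(\vec a)}$. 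Because $\Phi(\vec a)$ is real-valued and $B(\vec a)$ is purely imaginary, $\gamma_j(\Phi(\vec a))=0$, and because Lemma~\ref{lem-curve} forces $\|\Phi(\vec a)\|_{L^2}^2=\|\tilde\varphi_{\omega_0}\|_{L^2}^2$ to be constant, $\gamma_0(\Phi(\vec a))=0$. Thus at $u=\Phi(\vec a)$ one has the clean identity $iA'(\Phi(\vec a))=a_1\partial_{a_1}\Phi(\vec a)+a_2\partial_{a_2}\Phi(\vec a)$, which is real and of size $O(|\vec a|)$.

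Next I would Taylor-expand the first factor around $\phi_{\omega_0}(\vec a)$: since $S'_{\omega_{\omega_0}(\vec a)}(\phi_{\omega_0}(\vec a))=0$,
\[
S'_{\omega_{\omega_0}(\vec a)}(\Phi(\vec a))=\rho(\vec a)\,S''_{\omega_{\omega_0}(\vec a)}(\phi_{\omega_0}(\vec a))\partial_\omega\tilde\varphi_{\omega_0}+O(\rho(\vec a)^2).
\]
Pairing with $iA'(\Phi(\vec a))=O(|\vec a|)$ and using $\rho\sim|\vec a|^2$ absorbs the Taylor remainder into $O(\rho^2|\vec a|)=o(\rho^2)$. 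Using the self-adjointness of $S''$ and differentiating the identity $S'_{\omega_{\omega_0}(\vec a)}(\phi_{\omega_0}(\vec a))=0$ with respect to $a_j$ gives $S''_{\omega_{\omega_0}(\vec a)}(\phi_{\omega_0}(\vec a))\partial_{a_j}\phi_{\omega_0}(\vec a)=-\partial_{a_j}\omega_{\omega_0}(\vec a)\cdot\phi_{\omega_0}(\vec a)$, which transfers the derivative and produces
\[
\langle S''\partial_\omega\tilde\varphi_{\omega_0},a_j\partial_{a_j}\phi_{\omega_0}(\vec a)\rangle=-a_j\partial_{a_j}\omega_{\omega_0}(\vec a)\cdot\langle\partial_\omega\tilde\varphi_{\omega_0},\phi_{\omega_0}(\vec a)\rangle_{L^2}.
\]
The complementary piece coming from $\partial_{a_j}\Phi=\partial_{a_j}\phi_{\omega_0}(\vec a)+(\partial_{a_j}\rho)\partial_\omega\tilde\varphi_{\omega_0}$ yields $\rho(\vec a)\cdot a_j\partial_{a_j}\rho(\vec a)\cdot\langle S''_{\omega_0}(\tilde\varphi_{\omega_0})\partial_\omega\tilde\varphi_{\omega_0},\partial_\omega\tilde\varphi_{\omega_0}\rangle$, and this last inner product equals $-\langle\tilde\varphi_{\omega_0},\partial_\omega\tilde\varphi_{\omega_0}\rangle_{L^2}$ via $S''_{\omega_0}(\tilde\varphi_{\omega_0})\partial_\omega\tilde\varphi_{\omega_0}=-\tilde\varphi_{\omega_0}$.

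Because both $\omega_{\omega_0}(\vec a)$ and $\rho(\vec a)$ are rotation-invariant and hence depend only on $a=|\vec a|$, Euler's identity gives $a_j\partial_{a_j}\omega_{\omega_0}(\vec a)=\omega''_{\omega_0}(0)|\vec a|^2+o(|\vec a|^2)$ and $a_j\partial_{a_j}\rho(\vec a)=2\rho(\vec a)+o(|\vec a|^2)$. Inserting these, together with the expansion $\rho(\vec a)\langle\tilde\varphi_{\omega_0},\partial_\omega\tilde\varphi_{\omega_0}\rangle_{L^2}=-\frac{R_{p,\omega_0}}{4}|\vec a|^2+o(|\vec a|^2)$ coming from Lemma~\ref{lem-curve} and (\ref{norm-val}), I would obtain
\[
P(\Phi(\vec a))=\rho(\vec a)|\vec a|^2\Bigl[-\omega''_{\omega_0}(0)K+\tfrac{R_{p,\omega_0}}{2}\Bigr]+o(\rho(\vec a)|\vec a|^2),
\]
with $K=\frac12\frac{d}{d\omega}\|\tilde\varphi_\omega\|_{L^2}^2\big|_{\omega_0}$. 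Finally, substituting the definition $R_{p,\omega_0}=-2\frac{d\lambda_\omega}{d\omega}|_{\omega_0}\|\psi_{\omega_0}\cos(y/L)\|_{L^2}^2+2\omega''_{\omega_0}(0)K$ makes the bracket collapse to $-\frac{d\lambda_\omega}{d\omega}|_{\omega_0}\|\psi_{\omega_0}\cos(y/L)\|_{L^2}^2$, and the error $o(\rho|\vec a|^2)$ is $o(\rho^2)$ since $\rho\sim|\vec a|^2$, yielding the claimed formula.

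The main obstacle is the bookkeeping of remainder terms: one has two distinct infinitesimal scales $|\vec a|$ and $\rho(\vec a)$ which coincide in order ($\rho\sim|\vec a|^2$) but not as quantities, so the $O(\rho^2)$ Taylor error from $\Phi-\phi_{\omega_0}=\rho\,\partial_\omega\tilde\varphi_{\omega_0}$ and the leading $|\vec a|^2\rho$ contribution must be carefully separated. The subtlest single point is the vanishing of the $\theta'(u)$ and $a_j'(u)$ contributions to $A'(\Phi(\vec a))$, which hinges on the reality of $\Phi(\vec a)$ together with the constancy of $\|\Phi(\vec a)\|_{L^2}^2$ built into the definition of $\rho(\vec a)$; without this simplification one would have to handle the gradients of the implicitly defined modulation parameters, which is considerably more involved.
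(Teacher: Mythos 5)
Your proposal is correct and follows the same overall strategy as the paper's proof: evaluate the modulation parameters at $u=\Phi(\vec a)$ to reduce $iA'(\Phi(\vec a))$ to $a_1\partial_{a_1}\Phi(\vec a)+a_2\partial_{a_2}\Phi(\vec a)$, expand $S'_{\omega_{\omega_0}(\vec a)}(\Phi(\vec a))$ about the critical point $\phi_{\omega_0}(\vec a)$, and arrive at the same intermediate identity $P(\Phi(\vec a))=-\rho(\vec a)\tbr{\tilde\varphi_{\omega_0},\partial_\omega\tilde\varphi_{\omega_0}}_{L^2}\bigl(\sum_j a_j\partial_{a_j}\rho(\vec a)+|\vec a|^2\omega''_{\omega_0}(0)\bigr)+o(\rho(\vec a)|\vec a|^2)$ as in \eqref{unst-1}, before substituting \eqref{rho-ex} and the definition of $R_{p,\omega_0}$. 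The one place you genuinely diverge is in evaluating the pairing against $\sum_j a_j\partial_{a_j}\phi_{\omega_0}(\vec a)$: the paper writes out the explicit expansions \eqref{sesti}--\eqref{aesti}, including the nonlinear correction $p(p-1)(\tilde\varphi_{\omega_0})^{p-2}\psi_{\omega_0}^2(\cdots)^2$ and the $(S''_{\omega_0}(\tilde\varphi_{\omega_0}))^{-1}[\cdots]$ term coming from the second-order expansion of $\phi_{\omega_0}(a)$, and then cancels terms by hand; you instead differentiate the stationary equation $S'_{\omega_{\omega_0}(\vec a)}(\phi_{\omega_0}(\vec a))=0$ in $a_j$ to get $S''(\phi_{\omega_0}(\vec a))\partial_{a_j}\phi_{\omega_0}(\vec a)=-\partial_{a_j}\omega_{\omega_0}(\vec a)\,\phi_{\omega_0}(\vec a)$ and use self-adjointness (legitimate here since both arguments are real, so only the $\mathbb{L}^+$ block acts). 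Your device is cleaner, as it bypasses the explicit bifurcation expansion and makes the cancellation of the nonlinear terms automatic; the price is that you must justify the Euler-type estimates $\sum_j a_j\partial_{a_j}\omega_{\omega_0}=\omega''_{\omega_0}(0)|\vec a|^2+o(|\vec a|^2)$ and $\sum_j a_j\partial_{a_j}\rho=2\rho+o(|\vec a|^2)$, which is the same differentiated-expansion step the paper also relies on via \eqref{rho-ex}. Your explicit verification that the $\theta'(u)$ and $a_j'(u)$ contributions to $A'(\Phi(\vec a))$ vanish (reality of $\Phi(\vec a)$ plus constancy of $\norm{\Phi(\vec a)}_{L^2}$) fills in a step that \eqref{aesti} asserts without comment.
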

\begin{proof}
Let $\vec{a}_0 =(a_{1,0},a_{2,0}) \in U$.
Then $\norm{\Phi(\vec{a})}_{L^2}=\norm{\tilde{\varphi}_{\omega_0}}_{L^2}$, $\vec{a}(\Phi(\vec{a}_0))=\vec{a}_0$ and $\theta(\Phi(\vec{a}_0))=0$.
Therefore,
\begin{equation}\label{sesti}\begin{split}
 S_{\omega_{\omega_0}(\vec{a}_0)}'(\Phi(\vec{a}_0))=&S_{\omega_0}''(\tilde{\varphi}_{\omega_0})\rho(\vec{a}_0)\partial_{\omega}\tilde{\varphi}_{\omega_0}- \rho(\vec{a}_0)p(p-1)(\tilde{\varphi}_{\omega_0})^{p-2}\partial_{\omega}\tilde{\varphi}_{\omega_0} \psi_{\omega_0} \l(a_{1,0}\cos \frac{y}{L} + a_{2,0} \sin \frac{y}{L}\r) \\
&+o(\rho(\vec{a}_0)|\vec{a}_0|),
\end{split}\end{equation}
\begin{equation}\label{aesti}\begin{split}
&iA'(\Phi(\vec{a}_0))\\
=&a_{1,0}\partial_{a_1}\Phi(\vec{a}_0) +a_{2,0} \partial_{a_2}\Phi(\vec{a}_0)\\
=& \psi_{\omega_0}\l(a_{1,0}\cos \frac{y}{L} + a_{2,0}\sin \frac{y}{L}\r) + (a_{1,0}\partial_{a_1}\rho(\vec{a}_0) + a_{2,0}\partial_{a_2} \rho(\vec{a}_0))\partial_{\omega}\tilde{\varphi}_{\omega_0}\\
&+(S_{\omega_0}''(\tilde{\varphi}_{\omega_0}))^{-1}\Bigl[-|\vec{a}_0|^2\omega_{\omega_0}''(0) \tilde{\varphi}_{\omega_0} +p(p-1)(\tilde{\varphi}_{\omega_0})^{p-2}\psi_{\omega_0}^2\Bigl (a_{1,0} \cos \frac{y}{L}+a_{2,0}\sin \frac{y}{L}\Bigr)^2\Bigr]\\
&+o(|\vec{a}_0|^2)
\end{split}\end{equation}
Hence, we have
\begin{equation}\label{unst-1} \begin{split}
&P(\Phi(\vec{a}_0))\\
=&\tbr{S_{\omega_0}''(\tilde{\varphi}_{\omega_0})\rho(\vec{a}_0)\partial_{\omega}\tilde{\varphi}_{\omega_0},\psi_{\omega_0}\l(a_{1,0}\cos \frac{y}{L} + a_{2,0}\sin \frac{y}{L}\r) + (a_{1,0}\partial_{a_1}\rho(\vec{a}_0) + a_{2,0}\partial_{a_2} \rho(\vec{a}_0))\partial_{\omega}\tilde{\varphi}_{\omega_0}}_{L^2}\\
&+\tbr{\rho(\vec{a}_0)\partial_{\omega}\tilde{\varphi}_{\omega_0},-|\vec{a}_0|^2\omega_{\omega_0}''(0) \tilde{\varphi}_{\omega_0} +p(p-1)(\tilde{\varphi}_{\omega_0})^{p-2}\psi_{\omega_0}^2\Bigl (a_{1,0} \cos \frac{y}{L}+a_{2,0}\sin \frac{y}{L}\Bigr)^2}_{L^2}\\
&+\tbr{- \rho(\vec{a}_0)p(p-1)(\tilde{\varphi}_{\omega_0})^{p-2}\partial_{\omega}\tilde{\varphi}_{\omega_0} \psi_{\omega_0} \l(a_{1,0}\cos \frac{y}{L} + a_{2,0} \sin \frac{y}{L}\r),\psi_{\omega_0}\l(a_{1,0}\cos \frac{y}{L} + a_{2,0}\sin \frac{y}{L}\r)}_{L^2}\\
&+o(\rho(\vec{a}_0)|\vec{a}_0|^2)\\
=&-\rho(\vec{a}_0) \tbr{\tilde{\varphi}_{\omega_0},\partial_{\omega}\tilde{\varphi}_{\omega_0}}_{L^2}(a_{1,0}\partial_{a_1}\rho(\vec{a}_0) + a_{2,0}\partial_{a_2} \rho(\vec{a}_0)+|\vec{a}_0|^2\omega_{\omega_0}''(0) )+o(\rho(\vec{a}_0)|\vec{a}_0|^2).
\end{split}\end{equation}
By (\ref{rho-ex}), we have
\[\begin{split}
&\tbr{\tilde{\varphi}_{\omega_0},\partial_{\omega}\tilde{\varphi}_{\omega_0}}_{L^2}(a_{1,0}\partial_{a_1}\rho(\vec{a}_0) + a_{2,0}\partial_{a_2} \rho(\vec{a}_0))\\
 =& - \frac{|\vec{a}_0|^2 R_{p,\omega_0}}{2} + o(|\vec{a}_0|^2)\\
=&-|\vec{a}_0|^2\Bigl(-\frac{d\lambda_{\omega}}{d\omega}|_{\omega=\omega_0}\norm{\psi_{\omega_0}\cos \frac{y}{L}}_{L^2}^2 + \omega_{\omega_0}''(0) \tbr{\partial_{\omega}\tilde{\varphi}_{\omega_0},\tilde{\varphi}_{\omega_0}}_{L^2}\Bigr)+ o(|\vec{a}_0|^2).
\end{split} \]
Hence, the conclusion follows the equation (\ref{unst-1}).
\end{proof}

\begin{lemma}\label{cal-P-2}
Let $\varepsilon >0$ be sufficiently small and $u \in N_\varepsilon ^0 $ with $S_{\omega_0}(u)-S_{\omega_0}(\tilde{\varphi}_{\omega_0})<0$.
Then
\[P(u)=-|\vec{a}|^2\rho(\vec{a})\frac{d\lambda_{\omega}}{d\omega}|_{\omega=\omega_0}\norm{\psi_{\omega_0}\cos \frac{y}{L}}_{L^2}^2+o(\rho(\vec{a}(u))^2)+o(\norm{w(u)}_{H^1}^2).\]
\end{lemma}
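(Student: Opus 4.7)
The plan is to leverage the gauge invariance of $P$ together with the modulation decomposition of Lemma \ref{modulation} in order to reduce the statement to a perturbation of Lemma \ref{cal-P}. Since $S_{\omega}$ and the orthogonality conditions defining Lemma \ref{modulation} are both gauge equivariant, one first checks that $P(e^{i\theta_0}u)=P(u)$ for every $\theta_0\in\R$; consequently $P(u)=P(\tilde u)$, with $\tilde u:=e^{i\theta(u)}u=\Phi(\vec a(u))+\eta$ and $\eta:=w(u)+\alpha(u)\phi_{\omega_0}(\vec a(u))$. I would then Taylor-expand $P$ about $\Phi(\vec a)$, writing for brevity $\vec a=\vec a(u)$, $w=w(u)$, $\alpha=\alpha(u)$:
\[
P(\tilde u)=P(\Phi(\vec a))+\tbr{S''_{\omega_{\omega_0}(\vec a)}(\Phi(\vec a))\eta,iA'(\Phi(\vec a))}_{H^{-1},H^1}+\tbr{S'_{\omega_{\omega_0}(\vec a)}(\Phi(\vec a)),iA''(\Phi(\vec a))\eta}_{H^{-1},H^1}+R,
\]
where $R$ is a remainder of order $\norm{\eta}_{H^1}^2$ coupled to quantities of order $|\vec a|$ or $\rho(\vec a)$. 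The leading term $P(\Phi(\vec a))$ is identified by Lemma \ref{cal-P}.

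Next I would control the two linear-in-$\eta$ cross-terms by exploiting three facts in concert: (a) the orthogonalities $\tbr{\eta,\psi_{\omega_0}\cos(y/L)}_{L^2}=\tbr{\eta,\psi_{\omega_0}\sin(y/L)}_{L^2}=0$ from Lemma \ref{modulation} annihilate the principal $\psi_{\omega_0}(a_1\cos(y/L)+a_2\sin(y/L))$ piece of $iA'(\Phi(\vec a))$ displayed in \eqref{aesti}; (b) by Proposition \ref{eigenvalue}, $\psi_{\omega_0}\cos(y/L)$ and $\psi_{\omega_0}\sin(y/L)$ lie in $\mbox{\rm Ker}(S''_{\omega_0}(\tilde\varphi_{\omega_0}))$, and since $S''_{\omega_{\omega_0}(\vec a)}(\Phi(\vec a))-S''_{\omega_0}(\tilde\varphi_{\omega_0})=O(|\vec a|)$ in the $H^1\to H^{-1}$ operator norm, the quantity $S''(\Phi(\vec a))\,iA'(\Phi(\vec a))$ is of size $O(|\vec a|^2)$; and (c) the expansion $S'_{\omega_{\omega_0}(\vec a)}(\Phi(\vec a))=O(\rho(\vec a))$ in $H^{-1}$ from \eqref{sesti}. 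Combined with Lemma \ref{alpha-est}, which yields $|\alpha|=O(\norm{w}_{H^1}(\rho(\vec a)+\norm{w}_{H^1}))$, these estimates bound the entire correction $P(\tilde u)-P(\Phi(\vec a))$ by a constant multiple of $|\vec a|^2\norm{w}_{H^1}+\rho(\vec a)\norm{w}_{H^1}$.

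The decisive ingredient is the hypothesis $S_{\omega_0}(u)-S_{\omega_0}(\tilde\varphi_{\omega_0})<0$. Combining Lemma \ref{v-st}, the coerciveness of Lemma \ref{coerciveness} applied in a tubular neighborhood of $\tilde\varphi_{\omega_0}$, and the fact that $C_{**}R_{p,\omega_0}<0$ in the regime considered in Proposition \ref{c-stability}(ii), I obtain the a priori bound $\norm{w(u)}_{H^1}^2\le C|\vec a(u)|^4$, i.e.\ $\norm{w(u)}_{H^1}=O(\rho(\vec a(u)))$. This additional smallness is exactly what converts each cross-term into $o(\rho(\vec a(u))^2)+o(\norm{w(u)}_{H^1}^2)$ via a weighted Young inequality; for instance, $|\vec a|^2\norm{w}_{H^1}=|\vec a|\cdot(|\vec a|\norm{w}_{H^1})\le\tfrac{|\vec a|}{2}(\rho(\vec a)^2/c+\norm{w}_{H^1}^2)$, which is $o(\rho^2)+o(\norm{w}^2)$ because $|\vec a|\to0$ as $\varepsilon\to 0$.

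The principal obstacle is precisely this upgrade from $O$-bounds to $o$-bounds. Without the sign hypothesis on the action, the cross-terms $|\vec a|^2\norm{w}_{H^1}$ and $\rho(\vec a)\norm{w}_{H^1}$ admit only the standard AM--GM estimate $O(\rho^2)+O(\norm{w}^2)$, which is too weak to isolate the leading $|\vec a|^2\rho(\vec a)$ asymptotic. The coupling $\norm{w}_{H^1}\lesssim|\vec a|^2$ forced by $S_{\omega_0}(u)<S_{\omega_0}(\tilde\varphi_{\omega_0})$ is what endows every correction with a vanishing prefactor and closes the argument.
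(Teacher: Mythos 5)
Your reduction of $P(u)-P(\Phi(\vec a(u)))$ to the two cross-terms is fine, but the way you dispose of them does not close. The obstacle is quantitative: by \eqref{rho-ex} one has $\rho(\vec a)\sim c\,|\vec a|^2$, so the leading term you are trying to isolate, $-|\vec a|^2\rho(\vec a)\tfrac{d\lambda_\omega}{d\omega}\|\psi_{\omega_0}\cos\tfrac yL\|_{L^2}^2$, is itself of size $|\vec a|^4\sim\rho(\vec a)^2$. Your a priori bound $\norm{w(u)}_{H^1}\le C|\vec a(u)|^2$ (which does follow from Lemma \ref{v-st}, Lemma \ref{coerciveness} and the sign hypothesis when $R_{p,\omega_0}<0$) turns the cross-term $|\vec a|^2\norm{w}_{H^1}$ into $O(|\vec a|^4)=O(\rho(\vec a)^2)$ — the \emph{same} order as the main term, with a constant you do not control. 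The displayed ``weighted Young'' step is arithmetically false: $|\vec a|\,\norm{w}_{H^1}\le\tfrac12(\rho(\vec a)^2/c+\norm{w}_{H^1}^2)$ cannot hold, since AM--GM gives $\tfrac12(|\vec a|^2+\norm{w}^2)$ and $|\vec a|^2\sim\rho(\vec a)\gg\rho(\vec a)^2$. The same problem affects $\rho(\vec a)\norm{w}_{H^1}=O(\rho(\vec a)^2)$. No choice of weights converts an $O(\rho^2)$ quantity into $o(\rho^2)+o(\norm{w}^2)$, so your argument only shows $P(u)=O(\rho(\vec a(u))^2)+O(\norm{w(u)}_{H^1}^2)$ and does not identify the leading coefficient.

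What is actually needed — and what the paper does — is to show that each linear-in-$w$ term is \emph{individually} $o(\rho^2+\norm{w}^2)$ by exploiting structure rather than operator-norm bounds. Concretely: (a) the $O(|\vec a|^2\norm{w})$ contribution coming from pairing $\bigl(S''_{\omega_{\omega_0}(\vec a)}(\Phi(\vec a))-S''_{\omega_0}(\tilde\varphi_{\omega_0})\bigr)w$ with the principal piece $\psi_{\omega_0}(a_1\cos\tfrac yL+a_2\sin\tfrac yL)$ of $iA'$ cancels against the term $\tbr{w,\,p(p-1)(\tilde\varphi_{\omega_0})^{p-2}\psi_{\omega_0}^2(a_1\cos\tfrac yL+a_2\sin\tfrac yL)^2}$ produced by the $(S''_{\omega_0}(\tilde\varphi_{\omega_0}))^{-1}[\cdots]$ component of $iA'$ in \eqref{aesti}, because $\Phi(\vec a)-\tilde\varphi_{\omega_0}$ and $iA'(\Phi(\vec a))$ share the same leading profile; (b) the term involving $\partial_\omega\tilde\varphi_{\omega_0}$ is handled through $S''_{\omega_0}(\tilde\varphi_{\omega_0})\partial_\omega\tilde\varphi_{\omega_0}=-\tilde\varphi_{\omega_0}$ together with $\tbr{w,\phi_{\omega_0}(\vec a)}_{L^2}=0$, which gives the extra factor $\tbr{w,\tilde\varphi_{\omega_0}}_{L^2}=O(|\vec a|\norm{w})$; (c) the $\tbr{S',iA''\eta}$ term is not merely $O(\rho\norm{w})$: using the modulation equations one shows $iA''(\Phi(\vec a))w=O(\alpha\norm{w}_{H^1})$ with $\alpha=O(\norm{w}(\rho+\norm{w}))$ from Lemma \ref{alpha-est}, which is genuinely higher order. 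Your items (a)--(c) capture the kernel property of $\psi_{\omega_0}\cos\tfrac yL$, $\psi_{\omega_0}\sin\tfrac yL$ and the estimate $S'(\Phi(\vec a))=O(\rho)$, but miss the cancellation in (a) and the refined bound in (c), and these are exactly the points where the crude bound loses a factor and lands on the borderline order.
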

\begin{proof}
By the Taylor expansion , we have 
\[\begin{split}
P(u)=&\tbr{S_{\omega_{\omega_0}(\vec{a}(u))}'(\Phi(\vec{a}(u))+w(u)+\alpha(u)\phi_{\omega_0}(\vec{a}(u))),iA'(\Phi(\vec{a}(u))+w(u)+\alpha(u)\phi_{\omega_0}(\vec{a}(u)))}_{H^{-1},H^1}\\
=& \langle S_{\omega_{\omega_0}(\vec{a}(u))}'(\Phi(\vec{a}(u)))+S_{\omega_{\omega_0}(\vec{a}(u))}''(\Phi(\vec{a}(u)))(w(u)+\alpha(u)\phi_{\omega_0}(\vec{a}(u))) ,\\
& iA'(\Phi(\vec{a}(u)))+iA''(\Phi(\vec{a}(u)))(w(u)+\alpha(u)\phi_{\omega_0}(\vec{a}(u)))\rangle _{H^{-1},H^1} + o(\rho(\vec{a}(u))^2+\norm{w(u)}_{H^1}^2)
\end{split}\]
By \eqref{sesti}, \eqref{aesti}, Lemma \ref{alpha-est} and Lemma \ref{cal-P},
\[\begin{split}
P(u)=&P(\Phi(\vec{a}(u)))+\tbr{S_{\omega_{\omega_0}(\vec{a}(u))}'(\Phi(\vec{a}(u))),iA''(\Phi(\vec{a}(u)))w(u)}_{L^2}\\
&+\tbr{S_{\omega_{\omega_0}(\vec{a}(u))}''(\Phi(\vec{a}(u)))w(u),iA'(\Phi(\vec{a}(u)))} _{H^{-1},H^1} + o(\rho(\vec{a}(u))^2+\norm{w(u)}_{H^1}^2)
\end{split}\]
By the proof of Lemma \ref{modulation}, we obtain that
\begin{equation*}
 \frac{\partial G}{\partial (\theta,a_1,a_2)} 
\begin{pmatrix}
\theta' \\
a_1'\\
a_2'
\end{pmatrix}
=
\begin{pmatrix}
-i e^{-i\theta(u)}\phi_{\omega_0}(\vec{a}(u))\\
-e^{-i\theta(u)} \psi_{\omega_0,1}\\
-e^{-i\theta(u)}\psi_{\omega_0,2}
\end{pmatrix}.
\end{equation*}
Thus $\theta'(\Phi(\vec{a}(u)), a_1'(\Phi(\vec{a}(u)))$ and $a_2'(\Phi(\vec{a}(u)))$ are linear combinations of $i\phi_{\omega_0}(\vec{a}(u))$, $\psi_{\omega_0,1}$ and $\psi_{\omega_0,2}$.
Since $\tbr{\theta'(\Phi(\vec{a}(u))),w(u)}_{L^2}=\tbr{a_1'(\Phi(\vec{a}(u))), w(u)}_{L^2}=\tbr{a_2'(\Phi(\vec{a}(u))),w(u)}_{L^2}=O(\alpha(u)\norm{w(u)}_{H^1})$,
we have
\[iA''(\Phi(\vec{a}(u)))w(u)=O(\alpha(u)\norm{w(u)}_{H^1}).\]
Therefore, by the orthogonal condition of $w(u)$ and $A'(\Phi(\vec{a}(u)))=O(\vec{a}(u))$ we obtain
\[\begin{split}
P(u)=&P(\Phi(\vec{a}(u)))+(\omega_{\omega_0}(\vec{a}(u))-\omega_0)\tbr{w(u),iA'(\Phi(\vec{a}(u)))}_{L^2}\\
&+\tbr{p(|\Phi(\vec{a}(u))|^{p-1}-|\tilde{\varphi}_{\omega_0}|^{p-1})w(u),iA'(\Phi(\vec{a}(u)))}_{L^2}\\
&+\tbr{S_{\omega_0}''(\tilde{\varphi}_{\omega_0})w(u),\psi_{\omega_0}\l(a_{1}(u)\cos \frac{y}{L} + a_{2}(u)\sin \frac{y}{L}\r) }_{H^{-1},H^1}\\
&+\tbr{S_{\omega_0}''(\tilde{\varphi}_{\omega_0})w(u),(a_{1}(u)\partial_{a_1}\rho(\vec{a}(u)) + a_{2}(u)\partial_{a_2} \rho(\vec{a}(u)))\partial_{\omega}\tilde{\varphi}_{\omega_0}}_{H^{-1},H^1}\\
&+\tbr{w(u),-|\vec{a}(u)|^2\omega_{\omega_0}''(0) \tilde{\varphi}_{\omega_0} +p(p-1)(\tilde{\varphi}_{\omega_0})^{p-2}\psi_{\omega_0}^2\Bigl (a_{1}(u) \cos \frac{y}{L}+a_{2}(u)\sin \frac{y}{L}\Bigr)^2}_{H^{-1},H^1}\\
&+ o(\rho(\vec{a}(u))^2+\norm{w(u)}_{H^1}^2)\\
=&P(\Phi(\vec{a}(u)))+ o(\rho(\vec{a}(u))^2+\norm{w(u)}_{H^1}^2)
\end{split}\]
Hence, we obtain the conclusion.
\end{proof}

We assume $e^{i\omega_0 t}\tilde{\varphi}_{\omega_0}$ is stable.
Let $\{\vec{a}_n\}_n$ be a sequence with $\vec{a}_n \to 0$ and $\{u_n\}_n$ be the sequence of solutions with $u_n(0)=\Phi(\vec{a}_n)$.
Since $R_{p,\omega_0}<0$ and there exists $C>0$ such that 
\[S_{\omega_0}(\Phi(\vec{a}_n))-S_{\omega_0}(\tilde{\varphi}_{\omega_0})=CR_{p,\omega_0}|\vec{a}_n|^4+o(|\vec{a}_n|^4),\]
we obtain $S_{\omega_0}(\tilde{\varphi}_{\omega_0})>S_{\omega_0}(\Phi(\vec{a}_n))$ for sufficiently large $n>1$.
From Lemma \ref{v-st} and Lemma \ref{cal-P-2} we have for sufficiently large $n>1$  
\[\begin{split}
0<&S_{\omega_0}(\tilde{\varphi}_{\omega_0})-S_{\omega_0}(\Phi(\vec{a}_n))\\
=&S_{\omega_0}(\tilde{\varphi}_{\omega_0})-S_{\omega_0}(u_n(t))\\
\leq & -C_{**}R_{p,\omega_0}|\vec{a}(u_n(t))|^4 - \frac{k_0}{2}\norm{w(u_n(t))}_{H^1}^2+o(\norm{w(u_n(t))}_{H^1}^2)+o(|\vec{a}(u_n(t))|^4).
\end{split}\]
By the stability of $e^{i\omega_0 t}\tilde{\varphi}_{\omega_0}$ and the equation (\ref{rho-ex}), we obtain there exists $c>0$ such that for sufficiently large $n>1$
\[0<S_{\omega_0}(\tilde{\varphi}_{\omega_0})-S_{\omega_0}(\Phi(\vec{a}_n))\leq cP(u_n(t)).\]
Since $\rho(\vec{a}(u_n(t)))$ is positive and bounded for $t\geq 0$ and sufficiently large $n>1$, there exists $\delta>0$ such that for $t\geq 0$
\[\frac{dA(u_n(t))}{dt}=P(u_n(t))>\delta.\]
This contradicts the boundedness of $A$ on $N_\varepsilon $.
Hence, $e^{i\omega_0 t}\tilde{\varphi}_{\omega_0}$ is unstable.

\section*{Acknowledgments}
The author would like to express his great appreciation to Professor Yoshio Tsutsumi for a lot to helpful advices and encouragements.
The author would like to thank Professor Mashahito Ohta for his helpful indication.

Yohei Yamazaki

Department of Mathematics

Kyoto University

Kyoto 606-8502

Japan

E-mail address: y-youhei@math.kyoto-u.ac.jp

\begin{thebibliography}{99}




\bibitem{A P S}
	\newblock J. C. Alexander, R. L. Pego and R. L. Sachs,
 	\newblock \emph{On the transverse instability of solitary waves in the Kadomtsev-Petviashvili equation}, 
 	\newblock Phys. Lett. A  (1997), no. 3-4, 187--192. 




\bibitem{D P C}
	\newblock B. Deconinck, D. E. Pelinovsky and J. D. Carter,
 	\newblock \emph{Transverse instabilities of deep-water solitary waves}, 
 	\newblock Proc. R. Soc. Lond. Ser. A Math. Phys. Eng. Sci. \textbf{462} (2006), no.2071, 2671--2694.
 	

\bibitem{C C O 1} 
\newblock M. Colin, T. Colin and M. Ohta,
\newblock \emph{Stability of solitary waves for a system of nonlinear Sch\"odinger equations with three wave interaction},
\newblock Ann. I. Poincar\'e-AN, \textbf{26} (2009), 2211--2226.

\bibitem{C O} 
\newblock M. Colin and M. Ohta,
\newblock \emph{Bifurcation from semitrivial standing waves and ground states for a system of nonlinear Schr\"odinger equations},
\newblock SIAM J. Math. Anal., \textbf{44} (2012), no. 1, 206--223.

\bibitem{G O}
	\newblock V. Georgiev and M. Ohta, 
	\newblock \emph{Nonlinear instability of linearly unstable standing waves for nonlinear Schrodinger equations},
	\newblock Journal of the Mathematical Society of Japan \textbf{64} (2012), no. 2, 533--548.
	
\bibitem{G J L S} 
\newblock F. Gesztesy, C. K. R. T. Jones, Y. Latushkin and M. Stanislavova,
\newblock \emph{A spectral mapping theorem and invariant manifolds for nonlinear Schr\"odinger equations},
\newblock Indiana Univ. Math. J., \textbf{49} (2000), 221--243.

\bibitem{G V}
	\newblock J. Ginibre and G. Velo, 
	\newblock \emph{On a class of nonlinear Sch\"odinger equations. I. The Cauchy problem, general case}, 
	\newblock J. Funct. Anal. \textbf{32} (1979), 1-32.

\bibitem{G V 2} 
\newblock J. Ginibre and G. Velo,
\newblock \emph{Scattering theory in the energy space for a class of nonlinear wave equations},
\newblock Comm. Math. Phys., \textbf{123} (1989), 535--573.

\bibitem{EG}
	\newblock E. Grenier, 
 	\newblock \emph{On the nonlinear instability of Euler and Prandtl equations}, 
 	\newblock Comm. Pure Appl. Math. \textbf{53} (2000) 1067--1091.


\bibitem{G S S 1}
     \newblock M. Grillakis, J. Shatah and W. Strauss,
     \newblock \emph{Stability theory of solitary waves in the presence of symmetry. I},
     \newblock J. Funct. Anal. A \textbf{74} (1987), no. 1, 160--197.

\bibitem{G S S 2}
	\newblock M. Grillakis, J. Shatah and W. Strauss, 
	\newblock \emph{Stability theory of solitary waves in the presence of symmetry II}, 
	\newblock J. Funct. Anal. \textbf{94} (1990), 308--348.


\bibitem{K K P}
     \newblock E. Kirr, P. G. Kevrekidis and D. E. Pelinovsky, 
     \newblock \emph{Symmetry-breaking bifurcation in the nonlinear Schr\"odinger equation with symmetric potentials},
     \newblock Comm. Math. Phys. \textbf{308} (2011), no. 3, 795--844.


\bibitem{MM}
     \newblock M. Maeda, 
     \newblock \emph{Stability of bound states of Hamiltonian PDEs in the degenerate cases},
     \newblock J. Funct. Anal.  \textbf{263 } (2012), no. 2, 511--528.
%

\bibitem{MO}
     \newblock M. Ohta, 
     \newblock \emph{Instability of bound states for abstract nonlinear Schr\"odinger equations},
     \newblock J. Funct. Anal.  \textbf{261} (2011), no. 1, 90-110.
     
\bibitem{TM}
	\newblock T. Mizumachi, 
 	\newblock \emph{Stability of line solitons for the KP-II equation in $\R^2$}, 
 	\newblock arXiv:1303.3532 , 1--76.
 	
 \bibitem{M T}
	\newblock T. Mizumachi and N. Tzvetkov, 
 	\newblock \emph{Stability of the line soliton of the KP-II equation under periodic transverse perturbations}, 
 	\newblock Math. Ann. \textbf{352} (2012), no. 3, 659--690. 


 	
 	
\bibitem{P W} 
\newblock R. Pego and M. I. Weinstein,
\newblock \emph{Eigenvalues, and instabilities of solitary waves},
\newblock Phil. Trans. R. Soc. London A, \textbf{340} (1992), 47--94.

\bibitem{R W}
     \newblock H. A. Rose and M. I. Weinstein, 
     \newblock \emph{On the bound states of the nonlinear Schr\"odinger equation with a linear potential},
     \newblock  Physica D \textbf{30} (1988), 207--218.

\bibitem{R T 0}
	\newblock F. Rousset and N. Tzvetkov, 
	\newblock \emph{Transverse nonlinear instability of solitary waves for some Hamiltonian PDE's}, 
	\newblock J. Math. Pures. Appl. \textbf{90} (2008) 550--590.

\bibitem{R T 1}
	\newblock F. Rousset and N. Tzvetkov, 
	\newblock \emph{Transverse nonlinear instability for two-dimensional dispersive models}, 
	\newblock Ann. I. Poincar\'e-AN \textbf{26} (2009) 477--496.

\bibitem{R T 2} 
\newblock F. Rousset and N. Tzvetkov,
\newblock \emph{A simple criterion of transverse linear instability for solitary waves},
\newblock Math. Res. Lett., \textbf{17} (2010), no. 1, 157--169.

\bibitem{R T 3} 
\newblock F. Rousset and N. Tzvetkov,
\newblock \emph{Stability and instability of the KdV solitary wave under the KP-I flow},
\newblock Comm. Math. Phys., \textbf{313} (2012), no. 1, 155--173.


\bibitem{S S} 
\newblock J. Shatah and W. Strauss,
\newblock \emph{Spectral condition for instability},
\newblock Contemp. Math., \textbf{255} (2000), 189--198.


\bibitem{T T}
	\newblock  H. Takaoka and N. Tzvetkov,
	\newblock  \emph{On 2D nonlinear Schr\"odinger equations with Data on $\RT$}, 
	\newblock J. Funct. Anal. \textbf{182} (2001) 427--442.

\bibitem{YY1}
     \newblock Y. Yamazaki, 
     \newblock \emph{Transverse instability for a system of nonlinear Schr\"odinger equations},
     \newblock  Discrete Contin. Dyn. Syst. Ser. B \textbf{19} (2014), no.2, 565--588.
     
\bibitem{YY2}
     \newblock Y. Yamazaki, 
     \newblock \emph{Stability of line standing waves near the bifurcation point for nonlinear Schrodinger equations},
     \newblock  appear to Kodai Math. J.




\end{thebibliography}
\end{document}